\documentclass[11pt]{article} %
\usepackage[a4paper, left=25mm, top=25mm, right=25mm, bottom=25mm]{geometry}
\usepackage{marginnote}

\usepackage{booktabs}

\usepackage[english]{babel}
\usepackage[T1]{fontenc}

\usepackage{amssymb,amsfonts,amsmath,amsthm}

\usepackage{mathrsfs}
\usepackage{stmaryrd}

\usepackage{stackengine}
\usepackage{mathtools}

\usepackage[hidelinks,unicode=true]{hyperref}
\usepackage{color}

\usepackage{dsfont}
\usepackage{comment}
\usepackage{lastpage}
\usepackage{appendix}
\usepackage{esvect}
\usepackage{esint}

\usepackage{float}

\usepackage{tikz}
\usetikzlibrary{decorations.pathmorphing,patterns}
\usepackage{tkz-graph}
\usepackage{xcolor}
\usepackage{pgfplots}
\pgfplotsset{width=7cm,compat=1.5.1}

\usetikzlibrary{arrows.meta,calc,decorations.pathreplacing,intersections}
\usetikzlibrary{decorations.markings, positioning}

\renewcommand{\paragraph}[1]{$\empty$\\\textit{#1}.\hspace{0.5em}}

\numberwithin{equation}{section}

\usepackage{enumitem}

\newenvironment{itempuce}%
{\begin{itemize}[
	label=$-$, %
	leftmargin=*, %
]}{\end{itemize}}

\theoremstyle{definition}
\newtheorem{defn}{Definition}

\newtheorem{thm}{Theorem}
\newtheorem{lem}[thm]{Lemma}
\newtheorem{prop}[thm]{Proposition}
\newtheorem{cor}[thm]{Corollary}

\theoremstyle{remark}
\newtheorem{rmq}[defn]{Remark}

\newcommand{\fa}{\forall \,}

\renewcommand{\and}{\qquad\mathrm{and}\qquad}

\renewcommand{\leq}{\,\leqslant\,}
\renewcommand{\geq}{\,\geqslant\,}

\newcommand{\Lra}{\Longrightarrow}

\newcommand{\lra}{\longrightarrow}

\newcommand{\xra}[2]{\;\;\xrightarrow[#2]{#1}\;\;}
\newcommand{\lms}{\longmapsto}

\newcommand{\ps}[1]{\left\langle #1 \right\rangle}

\newcommand{\event}[1]{\left\{#1\right\}}
\newcommand{\vabs}[1]{\left|#1\right|}
\newcommand{\parent}[1]{\left(#1\right)}

\newcommand{\norme}[2]{\left\|#1\right\|_{#2}}

\newcommand{\dist}{\dist}

\newcommand{\mtext}[1]{\mbox{\rm #1}}

\renewcommand{\d}{\mathrm{d}}

\renewcommand{\H}{\mathrm{H}}

\renewcommand{\P}{\mathrm{P}}

\newcommand{\R}{\mathrm{R}}
\renewcommand{\S}{\mathrm{S}}

\newcommand{\1}{\mathds{1}}

\newcommand{\EE}{\mathbb{E}}

\newcommand{\PP}{\mathbb{P}}

\newcommand{\RR}{\mathbb{R}}

\newcommand{\AAA}{\mathcal{A}}

\newcommand{\DDD}{\mathscr{D}}
\newcommand{\EEE}{\mathcal{E}}
\newcommand{\FFF}{\mathcal{F}}
\newcommand{\GGG}{\mathcal{G}}

\newcommand{\KKK}{\mathcal{K}}
\newcommand{\LLL}{\mathcal{L}}
\newcommand{\MMM}{\mathcal{M}}
\newcommand{\NNN}{\mathcal{N}}

\newcommand{\PPP}{\mathcal{P}}

\newcommand{\TTT}{\mathcal{T}}

\newcommand{\XXX}{\mathcal{X}}

\newcommand{\Law}{\mtext{Law}}
\renewcommand{\dist}{\mtext{dist}}

\newcommand{\Wasserstein}{\mtext{Wasserstein}}

\newcommand{\Lip}{\mtext{Lip}}

\makeatletter

\def\@author{}
\def\@address{}
\def\@email{}
\def\@keywords{}
\def\@subjclass{}
\def\@subjclassyear{}

\renewcommand{\author}[1]{%
  \g@addto@macro\@author{#1\par}%
}

\newcommand{\address}[2][]{
  \if\relax\detokenize{#1}\relax
    \g@addto@macro\@address{#2\par}%
  \else
    \g@addto@macro\@address{\textsuperscript{#1}~#2\par}%
  \fi
}

\newcommand{\keywords}[1]{\def\@keywords{#1}}

\newcommand{\subjclass}[2][]{%
  \def\@subjclass{#2}%
  \def\@subjclassyear{#1}%
}

\renewcommand{\maketitle}{
\begin{center}
	\vspace*{2em}
	{\bfseries\LARGE \@title \par}
	\vskip 1.5em
	{\large \@author \par}
	\vskip 0.95em
	{\large \@date \par}
	\vspace*{1.5em}
\end{center}
\begingroup
\renewcommand\thefootnote{}
\long\def\@makefntext##1{\parindent=0pt\noindent##1}%
\footnotetext{%
	\ifx\@address\@empty\else%
		\@address%
	\fi
}
\endgroup
}

\newcommand{\keywordssubjclass}{
	\noindent
	\ifx\@keywords\@empty\else
		\textbf{Keywords.} \@keywords \par
	\fi
\medskip
	\noindent
	\ifx\@subjclass\@empty\else
		\textbf{Mathematics Subject Classification.} %
		\@subjclass\par
	\fi
\medskip
}

\makeatother
 
\usepackage{enumitem}

\title{Metastable confinement of particle systems near linearly stable
McKean--Vlasov equilibria}
\author{Samuel Chan-ashing\textsuperscript{1,2,3}, Quentin Cormier\textsuperscript{3}}
\address[1]{DMA, École normale supérieure, Université PSL, CNRS, 75005 Paris, France}
\address[2]{CEREMADE, Université Paris-Dauphine, PSL, CNRS}
\address[3]{CMAP, Ecole Polytechnique, Palaiseau, Paris, France}
\date{\today}

\keywords{McKean--Vlasov equation; Mean-field particle system; Metastability.}

\subjclass[2020]{Primary 60K35; Secondary 60H10, 35Q84, 45D05.}

\begin{document}

\maketitle

\begin{abstract}
We establish a general metastability principle showing that the local stability
of a McKean--Vlasov equilibrium implies exponentially long confinement of the
associated particle system. Consider interacting diffusions on $\RR^d$ with a
general smooth and bounded interaction kernel and a globally Lipschitz drift
that is dissipative at large distances. Assume the McKean--Vlasov equation has at
least one equilibrium which is locally exponentially stable. We prove that the
probability that the empirical measure leaves a sufficiently small fixed
Wasserstein-1 neighborhood of the selected equilibrium before time $T$ is
bounded by $C(1+T)e^{-cN}$, where $N$ is the number of particles and $C,c>0$ are
independent of $N$ and $T$. Consequently, confinement holds up to times
$e^{\gamma N}$, for every $0<\gamma<c$, with exponentially small exit
probability. The proof combines an explicit Volterra construction of the
linearized Fokker--Planck semigroup, a mild representation of empirical fluctuations, and exponential maximal estimates for stochastic convolutions. Our approach does not require a gradient flow structure or reversibility of the equations.
\end{abstract}

\keywordssubjclass

\tableofcontents

\section{Introduction}

A diffusion may admit a unique invariant probability measure and nevertheless converge very slowly to equilibrium. On observable time scales, its trajectories may remain trapped near locally stable states that do not describe the final equilibrium. This separation between asymptotic equilibrium and observable dynamics is one of the characteristic manifestations of \emph{metastability}. 
A canonical example is Kramers' small-noise diffusion in a multi-well potential \cite{Kra40,BH15}. Although its Gibbs measure describes the global equilibrium, a trajectory initialized in one well typically remains near a local minimum for an exponentially long time as the noise intensity tends to zero, before making a rare transition to another well.

An analogous situation occurs in large systems of interacting particles. Here, the number of particles \(N\) plays the role of the asymptotic parameter. 
For fixed $d\geq1$, consider the system
\begin{equation}\label{eq:spl}
\d X_t^{i,N}=\d B_t^i+\parent{b(X_t^{i,N})+\frac1N\sum_{j=1}^NF(X_t^{i,N},X_t^{j,N})}\d t,\qquad 1\leq i\leq N,
\end{equation}
where $B^1,\ldots,B^N$ are independent $d$-dimensional Brownian motions. 
For each fixed \(N\), the particle process is a finite-dimensional Markov diffusion. Under the assumptions considered below, it admits a unique invariant probability measure \(\pi^N\), and its law converges towards it as time tends to infinity. Although reflection-coupling methods provide quantitative exponential convergence estimates \cite{Ebe16}, the corresponding rates may depend on \(N\), and the time needed to approach \(\pi^N\) may grow rapidly with \(N\). To understand the dynamics before global equilibrium is reached, one is naturally led to consider the mean-field limit.

Classical propagation-of-chaos results \cite{Szn91} show that, on finite time intervals, the particle system is well approximated by its mean-field limit when \(N\) is large. Unlike the finite particle system, the limiting McKean--Vlasov equation may admit several invariant measures. Their coexistence and the associated phase transitions have been studied in various models, including double-well models by Tugaut \cite{Tug14} and interaction models on the torus by Carrillo, Gvalani, Pavliotis, and Schlichting \cite{CGPS20}. Locally stable invariant measures of the limiting equation are therefore natural candidates for the metastable states of the finite system.

However, the usual finite-time approximation estimates, whose constants grow exponentially in time, are typically informative only up to times of order \(\log N\). Under additional assumptions, the approximation can hold uniformly in time. Classical results under convexity assumptions include Malrieu \cite{Mal01}, while Lacker and Le Flem \cite{LL23} obtain sharp uniform-in-time estimates for particle marginals. Delarue and Tse \cite{DT25} develop an approach to uniform-in-time weak propagation of chaos on the torus based on the long-time behavior of the linearized Fokker--Planck equation. 
When several McKean--Vlasov equilibria coexist, uniform-in-time propagation of chaos cannot hold for all initial laws. Indeed, the McKean--Vlasov equation remains at whichever equilibrium it is initialized, whereas, for each fixed $N$, the particle system converges to the same invariant law regardless of its initial condition. A uniform-in-time approximation cannot therefore remain valid near every equilibrium. For the Curie--Weiss model, Journel and Le Bris \cite{JL25} obtain a uniform-in-time approximation by conditioning on the system not having left a prescribed region containing the selected equilibrium. Without this conditioning, the question is whether local stability can ensure that the empirical measure remains near a chosen equilibrium with high probability for times exponential in $N$.

Long-time confinement has already been established in several specific models. For mean-field plane rotators, Bertini, Giacomin, and Poquet \cite{BGP14} show that the empirical measure remains close to the manifold of synchronized profiles while the collective phase undergoes a Brownian motion on times of order \(N\). Building on their mild-formulation approach, Coppini \cite{Cop22} studies the stochastic Kuramoto model on suitable graph sequences and proves confinement near the set of stable stationary profiles over time horizons \(T_N=\exp(o(N))\).
In a different setting, Löcherbach and Monmarché \cite{LM22} establish metastable persistence and exit-time results for interacting neurons. For Wasserstein gradient flows with several stationary solutions, Monmarché and Reygner \cite{MR26} establish local exponential stability of the limiting equation through local nonlinear functional inequalities. For gradient mean-field diffusions with quadratic interaction and a multi-well confinement potential, Monmarché \cite{Mon25} obtains propagation of chaos on exponentially large time scales and an asymptotically exponential law for rescaled exit times. His approach uses modified dynamics whose invariant measure satisfies a log-Sobolev inequality uniform in the number of particles. These estimates concern the joint law of the particles and its marginals for gradient dynamics. Our estimates are complementary, since we control the trajectories of the empirical measure without requiring a gradient structure. See also Seo, Shin, Monmarché, and Choi \cite{SSMC26} for related results.
A complementary perspective is provided by the large-deviation theory of Dawson and Gärtner \cite{DG87,DG89}, which gives a framework for studying rare fluctuations of empirical measures and their relation to quasipotentials, describing the minimal action required to reach a given state from an attractor.

Our purpose here is to obtain a quantitative confinement estimate near a selected linearly stable McKean--Vlasov equilibrium. 
Under the stated regularity and confinement assumptions, we work on $\RR^d$ with a general bounded interaction kernel, without requiring a free-energy gradient-flow structure or reversibility. Starting sufficiently close to equilibrium, and under the initial moment conditions specified below, we show that the empirical measure remains in a small fixed neighborhood in $1$-Wasserstein distance for times exponential in $N$, with exponentially small exit probability. We illustrate this result with the Ornstein--Uhlenbeck--cosine model studied in \cite{Cor25}. 
For suitable interaction strengths, its limiting McKean--Vlasov equation admits several stable Gaussian equilibria, and our theorem yields exponentially long confinement near each of them. We also extend the result to the double-well model with linear interaction studied by Tugaut \cite{Tug14}, despite its cubic drift and unbounded interaction.

Our proof relies on the construction of a linearized semigroup, following the Volterra approach developed by Cormier \cite{Cor25} to study the local stability of McKean--Vlasov equilibria. This semigroup provides a mild representation of the empirical fluctuations around the selected equilibrium. The non-compact state space is handled through a separate control of the empirical second moment. This controls the contribution of the tails and allows us to approximate the Kantorovich class on a compact ball by finitely many smooth test functions. We then combine linear stability with estimates for the quadratic remainder and exponential maximal estimates for the stochastic convolutions.

\subsection{The model}

Fix \(d\geq1\) and \(N\geq1\), and consider \eqref{eq:spl} with empirical measure
\begin{equation}\label{eq:empirical-measure}
\mu_t^N:=\frac1N\sum_{j=1}^N\delta_{X_t^{j,N}}.
\end{equation}
We impose the following assumptions.

\begin{enumerate}[label=\textbf{(H\arabic*)}, ref=(H\arabic*), series=mes_hyps]
	\item \label{hyp:H1} The interaction $F$ belongs to $C_b^2(\RR^d\times\RR^d;\RR^d)$. Fix $L_F>0$ such that
	\[\norme{\nabla F}{\infty}+\norme{\nabla^2 F}{\infty}\leq L_F.\]
	The vector field $b\in C^2(\RR^d;\RR^d)$ is globally Lipschitz, and there are $R,\beta>0$ such that
	\[(x-y)\cdot\parent{b(x)-b(y)}\leq-\beta|x-y|^2,\qquad|x-y|\geq R.\]
\end{enumerate}

For $p=1,2$, $\mathcal P_p(\R^d)$ denotes the probability measures with finite $p$-th moment. For $\mu,\nu\in\PPP_1(\RR^d)$, the Kantorovich--Rubinstein formula reads
\begin{equation}\label{eq:W1}
\Wasserstein_1(\mu,\nu)=\sup_{\substack{g\in C^1(\RR^d),\ g(0)=0\\\norme{\nabla g}{\infty}\leq1}}\vabs{\ps{g,\mu-\nu}}.
\end{equation}

\paragraph{A unique invariant measure for the particle system}
Under \ref{hyp:H1}, the coefficients of \eqref{eq:spl} are globally Lipschitz, and the particle system is therefore globally well posed. Moreover, the drift $b$ is confining at infinity, while the interaction $F$ is bounded. Standard ergodic results for non-degenerate diffusions then imply that, for every fixed $N$, the $N$-particle process $X_t^N:=(X_t^{1,N},\ldots,X_t^{N,N})$ admits a unique invariant probability measure $\pi^N$, and that its law converges to $\pi^N$ as $t\to\infty$. This convergence can also be quantified. Applying the reflection coupling estimates of \cite[Corollary~2 and Lemma~1]{Ebe16} then gives constants $C_N\geq1$ and $\lambda_N>0$ such that, whenever $\Law(X_0^N)\in\PPP_1(\RR^{dN})$,
\[\Wasserstein_1\parent{\Law(X_t^N),\pi^N}\leq C_Ne^{-\lambda_Nt}\Wasserstein_1\parent{\Law(X_0^N),\pi^N},\qquad t\geq0,\]
where the Wasserstein distance is taken on $\RR^{dN}$ with its Euclidean norm. The constants can be chosen so that
\[C_N\leq Ce^{cN},\qquad \lambda_N^{-1}\leq Ce^{cN},\]
for some $C,c>0$ independent of $N$. For every fixed $\varepsilon\in(0,1)$, these estimates give an upper bound, exponential in $N$, on the time required to reduce the initial Wasserstein distance by a factor $\varepsilon$. They do not, however, determine the optimal convergence time. Our question is different: we study how long the empirical measure can remain close to a linearly stable equilibrium of the limiting McKean--Vlasov equation.

\paragraph{Mean-field limit and invariant measures}
Natural candidates for the metastable states of the particle system are provided by the stable invariant measures of its mean-field limit. For an initial probability measure \(\nu\), the limiting dynamics is described by the McKean--Vlasov equation
\begin{equation}\label{eq:mvl}
\d X_t^\nu=\d B_t+\parent{b(X_t^\nu)+\Phi(\mu_t^\nu)(X_t^\nu)}\d t,\qquad \mu_t^\nu:=\Law(X_t^\nu),\qquad \mu_0^\nu=\nu,
\end{equation}
where
\[\Phi(\mu)(x):=\int_{\RR^d}F(x,y)\,\mu(\d y).\]
For a bounded globally Lipschitz vector field \(\alpha:\RR^d\to\RR^d\), consider the diffusion
\begin{equation}\label{eq:mvll}
\d Y_t^\alpha=\d B_t+\parent{b(Y_t^\alpha)+\alpha(Y_t^\alpha)}\d t.
\end{equation}
Under \ref{hyp:H1}, this diffusion admits a unique invariant probability
measure, denoted by \(\nu_\infty^\alpha\). Suppose first that \(\nu_\infty\) is
invariant for the McKean--Vlasov equation~\eqref{eq:mvl} and define \(\alpha:=\Phi(\nu_\infty)\). When the equation is initialized with law \(\nu_\infty\), its law remains equal to \(\nu_\infty\), so the interaction field remains equal to \(\alpha\). Hence \(\nu_\infty\) is invariant for the diffusion \eqref{eq:mvll}, and therefore \(\nu_\infty=\nu_\infty^\alpha\). Conversely, suppose that
\begin{equation}\label{eq:general-fixed-point}
\alpha=\Phi(\nu_\infty^\alpha),
\end{equation}
and that \(Y_0^\alpha\sim\nu_\infty^\alpha\). Then \(Y^\alpha\) also solves \eqref{eq:mvl}. Therefore, \(\nu_\infty^\alpha\) is invariant for the McKean--Vlasov equation. Thus the invariant laws of \eqref{eq:mvl} are precisely the measures \(\nu_\infty^\alpha\) associated with solutions of the self-consistency relation \eqref{eq:general-fixed-point}.

Since our aim is to study the dynamics near an invariant law, we fix one such law in the following assumption.
\begin{enumerate}[resume*=mes_hyps]
	\item \label{hyp:H2} We fix an invariant probability measure
		$\nu_\infty$ of the McKean--Vlasov equation~\eqref{eq:mvl} and set $\alpha:=\Phi(\nu_\infty)$. No uniqueness is assumed.
\end{enumerate}
By the preceding characterization, \(\nu_\infty=\nu_\infty^\alpha\) and \(\alpha\) satisfies the self-consistency relation \eqref{eq:general-fixed-point}. Moreover, \ref{hyp:H1} implies that \(\alpha\in C_b^2(\RR^d;\RR^d)\). The selected invariant law need not be globally attractive.

An apparent discrepancy may arise between the long-time behavior of the finite-particle system and that of its McKean--Vlasov limit. For every fixed \(N\), the particle system admits a unique invariant probability measure \(\pi^N\), whereas the nonlinear limiting equation may admit several invariant laws. When several of these invariant laws are stable, they provide natural candidates for the metastable states of the finite system. Figure~\ref{fig:mospl} provides a numerical illustration of this behavior for the one-dimensional Ornstein--Uhlenbeck--cosine model studied in Section~\ref{sec:ou}. 

For this simulation, we initialize the particles independently with law $\nu_\infty^\alpha= \NNN(\alpha,1/2)$, where $\alpha\simeq 6.411174$ is a stable equilibrium mean. Figure~\ref{fig:mospl} shows the Wasserstein distance between the empirical measure and this fixed equilibrium.
\begin{figure}[htbp]
\centering
\includegraphics[width=0.8\textwidth]{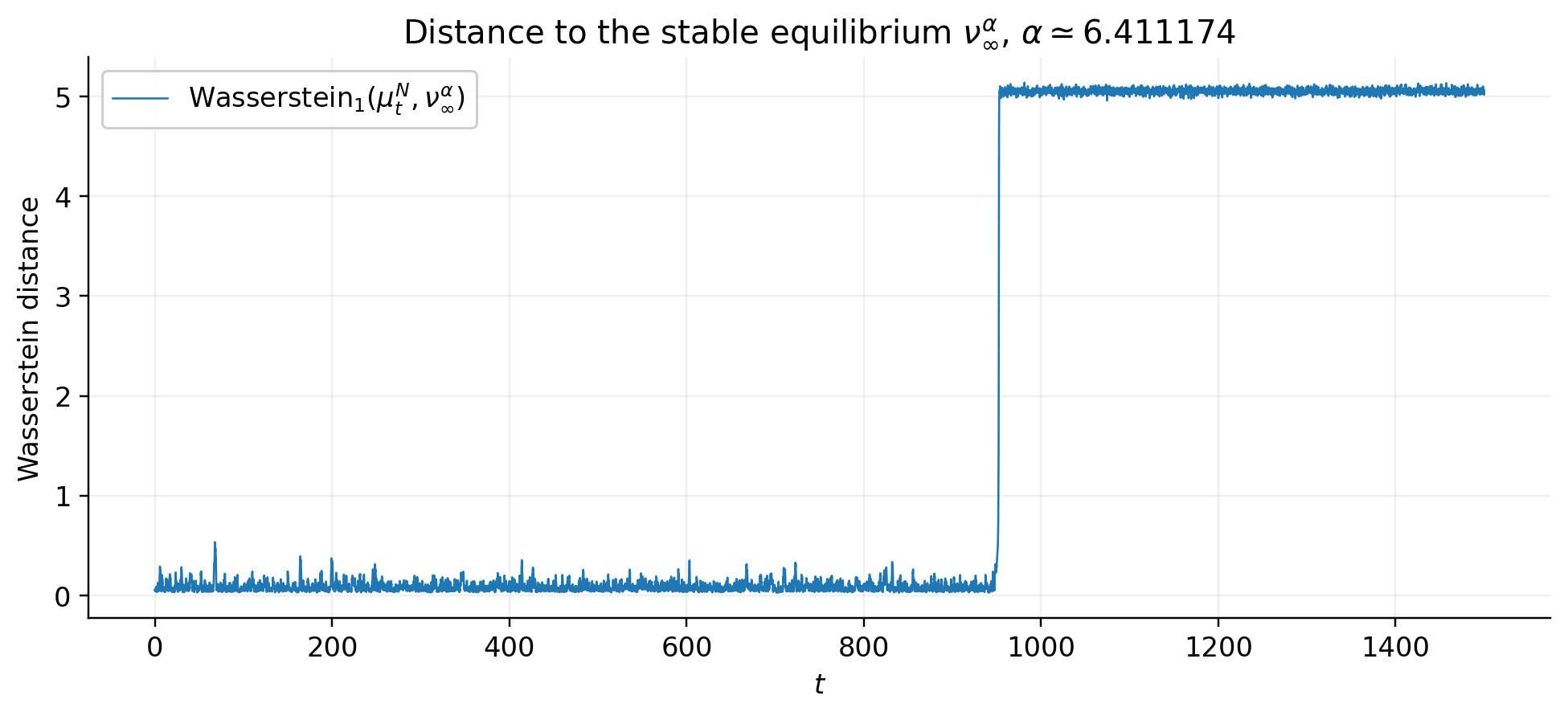}
\caption{Ornstein--Uhlenbeck--cosine model with $J=8.3$, $N=160$ and $\sigma=1$. Wasserstein distance of order~$1$ between the empirical measure $\mu_t^N$ and the stable equilibrium $\nu_\infty^\alpha= \NNN(\alpha,1/2)$, with $\alpha\simeq6.411174$.}
\label{fig:mospl}
\end{figure}

\paragraph{Stability}
We now discuss the stability of the selected invariant law \(\nu_{\infty}^{\alpha}\). We say that \(\nu_{\infty}^{\alpha}\) is locally exponentially stable in \(\Wasserstein_1\) if there exist constants \(C\geq1\), \(\lambda>0\), and \(\varepsilon>0\) such that, for every initial law \(\nu_0\in\PPP_1(\RR^d)\) satisfying \(\Wasserstein_1(\nu_0,\nu_{\infty}^{\alpha})<\varepsilon\), the solution of the nonlinear McKean--Vlasov equation satisfies
\begin{equation}\label{eq:non-linear-stability}
\Wasserstein_1\parent{\Law(X_t^{\nu_0}),\nu_{\infty}^{\alpha}}\leq Ce^{-\lambda t}\Wasserstein_1(\nu_0,\nu_{\infty}^{\alpha}),\qquad t\geq0.
\end{equation}
This estimate means that solutions starting sufficiently close to \(\nu_{\infty}^{\alpha}\) converge to it exponentially fast. The proof of Theorem~\ref{thm:main} below does not use the nonlinear estimate \eqref{eq:non-linear-stability} directly. Instead, it relies on the following linear stability assumption, denoted by \ref{hyp:H3}, for the dynamics linearized at \(\nu_{\infty}^{\alpha}\). Under the present assumptions, this condition is equivalent to \eqref{eq:non-linear-stability}; see Proposition~\ref{prop:stability-equivalence}.

For the analysis of the linearized dynamics, we use the class
\begin{equation}\label{eq:X-class}
\XXX:=\left\{g\in C^2(\RR^d;\RR):\norme{\nabla g}{\infty}+\norme{\nabla^2g}{\infty}<\infty\right\}.
\end{equation}
These functions need not be bounded. Restricting the supremum in \eqref{eq:W1} to $\XXX$ leaves the Wasserstein distance unchanged, since convolution approximation preserves the Lipschitz constant. This allows us to work with twice continuously differentiable test functions throughout the construction, without changing the distance used in the theorem.

Let $(\S_t)_{t\geq0}$ be the linearized semigroup constructed in Section~\ref{sec:lin}. We assume that
\begin{enumerate}[resume*=mes_hyps]
\item \label{hyp:H3} There exist constants $C_S\geq1$ and $\lambda_S>0$ such that, for every $g\in\XXX$,
\begin{equation}\label{eq:linear-stability}
\norme{\nabla \S_tg}{\infty}\leq C_Se^{-\lambda_St}\norme{\nabla g}{\infty},\qquad t\geq0.
\end{equation}
\end{enumerate}

\subsection{Main results}

To state our quantitative confinement result, we first define the dual norm used to measure the deviation of the empirical measure from \(\nu_{\infty}^{\alpha}\).

For every finite signed measure $\eta$ with zero total mass and finite first moment ($\int |x|\,|\eta|(\d x)<\infty$), define
\begin{equation}\label{eq:dualnorm}
\norme{\eta}{\ast}:=\sup_{\substack{g\in C^1(\RR^d),\ g(0)=0\\\norme{\nabla g}{\infty}\leq1}}\vabs{\ps{g,\eta}} \qquad \text{with} \quad \ps{g,\eta} :=\int g\, \d\eta.
\end{equation}
Thus, $\norme{\mu-\nu}{\ast}=\Wasserstein_1(\mu,\nu)$ for $\mu,\nu\in\PPP_1(\RR^d)$. We set
\begin{equation}\label{eq:D-process}
\nu_t^N:=\mu_t^N-\nu_\infty^\alpha,\qquad D_t^N:=\norme{\nu_t^N}{\ast}.
\end{equation}
Since the particle trajectories are continuous, $(D_t^N)_{t\geq0}$ is a non-negative continuous process.

\begin{thm}[Exponentially long confinement]\label{thm:main}
Assume \ref{hyp:H1}--\ref{hyp:H3}, and fix $M_{\rm init}>0$. There exists $a_0>0$ such that, for every $a\in(0,a_0]$, there are constants $C_a,c_a>0$, independent of $N$ and $T$, such that, for every $N\geq1$ and $T\geq0$,
\begin{equation}\label{eq:main}
\PP\parent{\sup_{0\leq t\leq T}\norme{\mu_t^N-\nu_\infty^\alpha}{\ast}\geq a}\leq C_a(1+T)e^{-c_aN},
\end{equation}
provided
\begin{equation}\label{eq:initial-conditions}
V_0^N:=\frac1N\sum_{i=1}^N|X_0^{i,N}|^2\leq M_{\rm init},\qquad \norme{\mu_0^N-\nu_\infty^\alpha}{\ast}\leq\frac{a}{8C_S}\qquad\text{almost surely}.
\end{equation}
\end{thm}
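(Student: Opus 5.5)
The proof proceeds on unit time windows $[k,k+1]$, $k=0,\ldots,\lceil T\rceil$, combined with a union bound. This is where the factor $(1+T)$ comes from.

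\textbf{Step 1: mild representation.} Let $\LLL$ be the generator of \eqref{eq:mvll} with field $\alpha$, and let $\S_t$ be the linearized semigroup, so that $\partial_t\S_tg=\S_t(\LLL g+\ps{\nabla g\cdot F(\cdot,x),\nu_\infty^\alpha})$-type terms. For $g\in\XXX$ and $s\mapsto \S_{t-s}g$, Itô's formula applied to $\ps{\S_{t-s}g,\mu_s^N}$ gives
\[
\ps{g,\nu_t^N}=\ps{\S_{t-t_0}g,\nu_{t_0}^N}+\int_{t_0}^t\ps{\nabla \S_{t-s}g\cdot(\Phi(\mu_s^N)-\alpha),\nu_s^N}\,\d s+M_{t_0,t}^N(g).
\]
The middle term is the quadratic remainder. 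The last term is the stochastic convolution
\[
M_{t_0,t}^N(g)=\frac1N\sum_i\int_{t_0}^t\nabla \S_{t-s}g(X_s^{i,N})\cdot\d B_s^i .
\]
Since $\nabla F$ is bounded, we have $|\Phi(\mu)-\alpha|\leq L_F\norme{\mu-\nu_\infty^\alpha}{\ast}$, and the analogous bound holds for its gradient. Using \ref{hyp:H3} together with a bound on $\nabla^2\S_t g$ from the Volterra construction (with at most an integrable singularity), the remainder is bounded by $C\int_{t_0}^t e^{-\lambda_S(t-s)}(D_s^N)^2\,\d s\,\norme{\nabla g}{\infty}$.

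\textbf{Step 2: moment control.} Using the dissipativity in \ref{hyp:H1}, Itô's formula for $V_t^N=\frac1N\sum|X_t^{i,N}|^2$ gives $\d V\leq(C-\beta' V)\d t+\d\text{mart}$, where the martingale has quadratic variation $\leq \frac{C}{N}V\,\d t$. An exponential martingale argument then shows that $\PP(\sup_{[k,k+1]}V_t^N\geq M)\leq Ce^{-cN}$ uniformly in $k$, for some $M$ depending on $M_{\rm init}$.

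\textbf{Step 3: finite net.} On $\{V\leq M\}$, the tails beyond radius $r$ contribute at most $(M+m_2(\nu_\infty^\alpha))/r$ to $D^N$, because Lipschitz test functions vanishing at $0$ grow linearly. On the ball $B_r$, Arzelà--Ascoli provides finitely many smooth $g_1,\ldots,g_K\in\XXX$, with $K$ depending only on $a$, that approximate the Kantorovich class to accuracy $a/100$. It therefore suffices to control $\sup_t|\ps{g_j,\nu_t^N}|$ for each $j$.

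\textbf{Step 4: stochastic convolution.} This is the main obstacle, since $M_{t_0,t}^N(g)$ is not a martingale in $t$. I would handle it either by the factorization method of Da Prato--Kwapień--Zabczyk or by a time discretization on a mesh of size $N^{-1}$ together with Hölder control. In either case, Bernstein's inequality for martingales with integrand bounded by $C_S\norme{\nabla g}{\infty}/N$ should give
\[
\PP\Bigl(\sup_{t\in[k,k+1]}|M^N_{t_0,t}(g_j)|\geq\delta\Bigr)\leq C e^{-c\delta^2N}.
\]
This requires the semigroup smoothing bounds on $\nabla\S_tg$ and $\nabla^2\S_tg$ established earlier in the paper.

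\textbf{Step 5: bootstrap.} Define the stopping time $\tau=\inf\{t:D_t^N\geq a\}$. On the good event $\{V\leq M\}\cap\{\text{all } |M(g_j)|\leq\delta\}$, take $t\leq\tau$ and use the representation from $t_0=0$, treating the whole horizon by restarting the semigroup at each window. This gives
\[
D_t\leq C_Se^{-\lambda_S t}D_0+\frac{Ca^2}{\lambda_S}+\delta+\frac{a}{8}.
\]
The $a/8$ term absorbs both the net approximation error and the tail contribution. With $D_0\leq a/(8C_S)$, choosing $a_0$ small makes $Ca_0/\lambda_S<1/8$, and then choosing $\delta=a/8$ yields $D_t\leq a/2<a$, contradicting the definition of $\tau$.

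The window bookkeeping needs care. Because the representation is written from time $0$, the convolution must be controlled for all $t\leq T$ with a uniform constant. The exponential decay of $\nabla\S$ makes the contributions from past windows summable. Summing the bad-event probabilities over the windows then gives the bound $C_a(1+T)e^{-c_aN}$ in \eqref{eq:main}.
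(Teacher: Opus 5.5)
Your outline follows the paper's proof closely. It uses the same mild equation from time $0$ with the linearized semigroup, the same Lipschitz bound on $\nabla\S_{t-s}g\cdot(\Phi(\mu_s^N)-\alpha)$ for the quadratic term, and the same second-moment control with a tail cut. It also uses a finite Arzel\`a--Ascoli net, a maximal inequality on unit windows summed over $\lceil T\rceil$ windows, and a continuity argument at the first time $D^N$ reaches $a$. You are right to anchor the representation at $t_0=0$: restarting at each window would multiply $D_k^N$ by $C_S\geq1$, and the bootstrap would not close.

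The step that is not justified as written is Step~4. Any mesh, chaining or Kolmogorov argument for $\sup_{t\in[k,k+1]}|z_t^N(g_j)|$ needs control of the kernel increments in the \emph{terminal} time, $\nabla\S_{t-s}g_j-\nabla\S_{u-s}g_j$. The spatial bounds on $\nabla\S_tg$ and $\nabla^2\S_tg$ that you cite do not give this. Since $\nabla\S_{r+h}g-\nabla\S_rg=\nabla\S_r(\S_hg-g)$, by \ref{hyp:H3} you need $\norme{\nabla(\S_hg-g)}{\infty}$ to be small in $h$. Because $b$ grows linearly, this fails for general $g\in\XXX$. Already for $b(x)=-x$, constant $\alpha$ and $g=-\cos$, the phase of $\nabla\P_h^\alpha g(x)$ is shifted by roughly $hx$. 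Hence $\nabla(\P_h^\alpha g-g)$ is of order one at $|x|\sim\pi/h$, and the Duhamel correction is only $O(h)$.

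The paper closes this gap by choosing the net in $\DDD$, i.e.\ with compactly supported gradients. Your net can be made to satisfy this by extending to compact support before mollifying. Then $\LLL_\alpha g$ is globally Lipschitz, and Lemma~\ref{lem:terminal-time} gives $\norme{\nabla\S_{r+h}g-\nabla\S_rg}{\infty}\leq C_SC_ghe^{-\lambda_Sr}$. This is exactly the increment condition \eqref{eq:increment-kernel-condition} with $\rho=1/2$, with constants depending only on the finite family. Dyadic chaining on unit windows (Lemma~\ref{lem:stochastic-convolution}) then yields your Step~4 bound. Keeping general $g_j\in\XXX$ would instead require a position-weighted increment estimate, together with stopping at the exit from $\{V^N\leq 2M\}$ so that the kernel bounds become deterministic. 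That route is more delicate, and your sketch contains neither ingredient.

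A smaller point concerns the Hessian bound. Its exponential decay does not come from the Volterra construction, which only gives finite-time bounds. It comes from the short-time Duhamel estimate combined with \ref{hyp:H3} and the semigroup property. The result is $C_Hr^{-1/2}e^{-\lambda_Hr}$ with $\lambda_H<\lambda_S$ (Lemma~\ref{lem:hessian}), which is enough for your bootstrap.
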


\begin{rmq}[Random initial data]\label{rmq:random-initial-data}
The constants $C_a$ and $c_a$ in Theorem~\ref{thm:main} depend on the initial moment bound $M_{\rm init}$. However, the almost-sure assumption \eqref{eq:initial-conditions} may be relaxed at the cost of two additional error probabilities. More precisely, for every deterministic $M>0$ and every sufficiently small $a>0$, conditioning on the initial configuration gives constants $C_{a,M},c_{a,M}>0$, independent of $N$ and $T$, such that
\[\PP\parent{\sup_{0\leq t\leq T}\norme{\mu_t^N-\nu_\infty^\alpha}{\ast}\geq a}\leq C_{a,M}(1+T)e^{-c_{a,M}N}+\PP\parent{\norme{\mu_0^N-\nu_\infty^\alpha}{\ast}>\frac{a}{8C_S}}+\PP(V_0^N>M).\]
An exponential estimate is therefore recovered whenever the two initial error probabilities decay exponentially in $N$, for a fixed choice of $M$. This occurs, for example, for independent initial data satisfying suitable concentration and moment assumptions. We do not pursue this question here and state Theorem~\ref{thm:main} under almost-sure initial conditions.
\end{rmq}

\begin{cor}[Exponential lower bound on the mean exit time]
\label{cor:mean-exit-time-lower-bound}
Under the assumptions of Theorem~\ref{thm:main}, fix $a\in(0,a_0]$ and set the first exit time from the ball of radius \(a\) centered at the stable equilibrium $\tau_a^N:=\inf\event{t\geq0:\norme{\mu_t^N-\nu_\infty^\alpha}{\ast}\geq a}$. Then, for all sufficiently large $N$,
\[\EE(\tau_a^N)\geq \frac{e^{c_aN}}{8C_a},\]
where $C_a,c_a$ are the constants of Theorem~\ref{thm:main}.
\end{cor}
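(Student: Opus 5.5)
The corollary follows from Theorem~\ref{thm:main} through the tail formula for a nonnegative random variable. Since $D^N$ is continuous, the exit time satisfies
\[\event{\tau_a^N\leq T}\subseteq\event{\sup_{0\leq t\leq T}\norme{\mu_t^N-\nu_\infty^\alpha}{\ast}\geq a}.\]
So \eqref{eq:main} gives, for every $T\geq0$,
\[\PP(\tau_a^N\leq T)\leq C_a(1+T)e^{-c_aN}.\]
The initial conditions \eqref{eq:initial-conditions} hold almost surely, with $a/(8C_S)<a$, so $\tau_a^N>0$. However, the argument will not need this.

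The plan is to use Markov's inequality in the form $\EE(\tau_a^N)\geq T\,\PP(\tau_a^N>T)$, with $T$ chosen at the exponential scale. First,
\[\PP(\tau_a^N>T)\geq 1-C_a(1+T)e^{-c_aN}.\]
Take $T=T_N:=e^{c_aN}/(4C_a)$. Then
\[C_a(1+T_N)e^{-c_aN}=C_ae^{-c_aN}+\tfrac14.\]
This is at most $\tfrac12$ as soon as $C_ae^{-c_aN}\leq\tfrac14$, that is, for all $N\geq c_a^{-1}\log(4C_a)$ (every $N$ if $C_a\leq 1/4$). For such $N$ we get $\PP(\tau_a^N>T_N)\geq\tfrac12$, and hence
\[\EE(\tau_a^N)\geq\tfrac12 T_N=\frac{e^{c_aN}}{8C_a}.\]
This is exactly the claimed bound.

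The argument has no real obstacle, since all the work is contained in Theorem~\ref{thm:main}. The only points to check are the following.
\begin{itemize}
\item The inclusion of events above uses the definition of $\tau_a^N$ as a first time $\geq a$. If $\tau_a^N=\infty$ with positive probability, the bound holds trivially.
\item The constants $C_a,c_a$ are the same for every $T$. This is what allows $T$ to depend on $N$.
\item "Sufficiently large $N$" means $N\geq c_a^{-1}\log(4C_a)$.
\end{itemize}
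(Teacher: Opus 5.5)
Your proof is correct and follows the paper's argument: both combine Markov's inequality $\EE(\tau_a^N)\geq T\,\PP(\tau_a^N>T)$ with Theorem~\ref{thm:main} at an exponentially large horizon. The only difference is cosmetic: you take $T_N=e^{c_aN}/(4C_a)$, whereas the paper solves $C_a(1+T_N)e^{-c_aN}=1/2$, giving $T_N=e^{c_aN}/(2C_a)-1$; both choices lead to the same threshold $e^{c_aN}\geq4C_a$ and the same bound.
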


This result can be made explicit for the Ornstein--Uhlenbeck--cosine model. In this case, both the invariant laws and their linearized stability are characterized by scalar equations.

\begin{cor}[Exponentially long confinement for the OU-cosine model]\label{cor:OU-confinement}
Let $J\geq0$ and $\alpha\in\RR$ satisfy $\alpha=Je^{-1/4}\cos\alpha$ and $1+\alpha\tan\alpha>0$. Consider the particle system
\begin{equation}\label{eq:OU-particles-intro}
\d X_t^{i,N}=\parent{-X_t^{i,N}+\frac{J}{N}\sum_{j=1}^N\cos(X_t^{j,N})}\d t+\d B_t^i.
\end{equation}
Then, for every $M_{\rm init}<\infty$, there exists $a_0>0$ such that, for every $a\in(0,a_0]$, there are constants $C_{a,\alpha},c_{a,\alpha}>0$ such that, for every $N\geq1$ and $T\geq0$,
\[\PP\parent{\sup_{0\leq t\leq T}\Wasserstein_1\parent{\mu_t^N,\NNN(\alpha,1/2)}\geq a}\leq C_{a,\alpha}(1+T)e^{-c_{a,\alpha}N},\]
provided the initial data satisfy the initial conditions \eqref{eq:initial-conditions} with $\nu_{\infty}^{\alpha}=\NNN(\alpha,1/2)$.
\end{cor}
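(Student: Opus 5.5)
The plan is to obtain Corollary~\ref{cor:OU-confinement} as a direct application of Theorem~\ref{thm:main}. This requires checking \ref{hyp:H1}--\ref{hyp:H3} for $d=1$, $b(x)=-x$ and $F(x,y)=J\cos y$.

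\textbf{Step 1: hypothesis \ref{hyp:H1}.} The kernel $F(x,y)=J\cos y$ lies in $C_b^2$, with $\norme{\nabla F}{\infty}+\norme{\nabla^2F}{\infty}\leq 2J$. The drift $b(x)=-x$ is $C^2$ and $1$-Lipschitz. It satisfies $(x-y)(b(x)-b(y))=-|x-y|^2$ for all $x,y$, so \ref{hyp:H1} holds with $\beta=1$ and any $R>0$. If $J=0$, the only solution is $\alpha=0$, and one may simply take $L_F>0$ arbitrary.

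\textbf{Step 2: identify the equilibrium, hypothesis \ref{hyp:H2}.} Here $\Phi(\mu)(x)=J\int\cos y\,\mu(\d y)$ is a constant field, which we identify with a real number $\alpha$. The diffusion \eqref{eq:mvll} is then the Ornstein--Uhlenbeck process $\d Y=\d B+(\alpha-Y)\,\d t$. Its unique invariant law is $\nu_\infty^\alpha=\NNN(\alpha,1/2)$. For $Z\sim\NNN(\alpha,1/2)$ we have $\EE\cos Z=\mathrm{Re}\,e^{i\alpha}e^{-1/4}$, so the self-consistency relation \eqref{eq:general-fixed-point} reads $\alpha=Je^{-1/4}\cos\alpha$. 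By the characterization preceding \ref{hyp:H2}, $\NNN(\alpha,1/2)$ is an invariant law of \eqref{eq:mvl}, and \ref{hyp:H2} holds.

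\textbf{Step 3: linear stability, hypothesis \ref{hyp:H3}.} This is the only nontrivial step. I would avoid the abstract Volterra construction of $\S_t$ and use the stability characterization instead. By Proposition~\ref{prop:stability-equivalence}, \ref{hyp:H3} is equivalent to the nonlinear local exponential stability \eqref{eq:non-linear-stability}, so it suffices to prove the latter.

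For an initial law $\nu_0$, write $m_t:=J\,\EE\cos X_t$. Then $X_t=e^{-t}X_0+\int_0^te^{-(t-s)}m_s\,\d s+G_t$, where $G_t$ is a centered Gaussian independent of $X_0$ with variance $(1-e^{-2t})/2$. Setting $\theta_t:=\int_0^te^{-(t-s)}m_s\,\d s$, we have $\dot\theta_t=-\theta_t+m_t$ and
\[
m_t=Je^{-(1-e^{-2t})/4}\,\mathrm{Re}\bigl(e^{i\theta_t}\EE e^{ie^{-t}X_0}\bigr).
\]
As $t\to\infty$ this gives the asymptotically autonomous scalar ODE $\dot\theta=-\theta+Je^{-1/4}\cos\theta$, with perturbation terms of size $O\bigl(e^{-t}(1+\Wasserstein_1(\nu_0,\nu_\infty^\alpha))\bigr)$ coming from $X_0$ and from the variance deficit. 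Its linearization at the fixed point $\alpha$ is $-1-Je^{-1/4}\sin\alpha$. When $\cos\alpha\neq0$, substituting $Je^{-1/4}=\alpha/\cos\alpha$ turns this into $-(1+\alpha\tan\alpha)<0$, which is exactly the stated condition; the case $\cos\alpha=0$ forces $\alpha=0$, which is excluded.

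A Gr\"onwall argument on $\theta_t-\alpha$ for the perturbed ODE then gives $|\theta_t-\alpha|\leq Ce^{-\lambda t}\Wasserstein_1(\nu_0,\nu_\infty^\alpha)$ for $\nu_0$ close to $\nu_\infty^\alpha$. The key bound is
\[
|\EE e^{ie^{-t}X_0}-\EE e^{ie^{-t}Z}|\leq e^{-t}\Wasserstein_1(\nu_0,\nu_\infty^\alpha).
\]
Finally, I would couple $X_t$ with a stationary copy. Both have the form $e^{-t}(\cdot)+\theta+G$, so their $\Wasserstein_1$ distance is bounded by $e^{-t}\Wasserstein_1(\nu_0,\nu_\infty^\alpha)+|\theta_t-\alpha|+|\theta^{\rm stat}_t-\alpha|$, which yields \eqref{eq:non-linear-stability}.

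The main obstacle is that the linearization involves a transient time-dependent factor and initial-data terms. Alternatively, I would compute $\S_t g$ directly: the linearized dynamics reduce to a scalar Volterra equation whose kernel has Laplace transform $J\,\EE[-\sin Z]\cdot(1+z)^{-1}$, and the same sign condition places its root in the left half-plane.

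\textbf{Step 4: conclusion.} With \ref{hyp:H1}--\ref{hyp:H3} verified, Theorem~\ref{thm:main} applies verbatim with $\nu_\infty^\alpha=\NNN(\alpha,1/2)$. Since $\norme{\cdot}{\ast}=\Wasserstein_1$ on differences of probability measures, this gives the stated bound, with constants depending on $\alpha$ (through $J$), $a$ and $M_{\rm init}$.
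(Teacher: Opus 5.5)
Your overall structure matches the paper's. You verify \ref{hyp:H1} and \ref{hyp:H2} directly, obtain \ref{hyp:H3} from Proposition~\ref{prop:stability-equivalence} via the nonlinear estimate \eqref{eq:non-linear-stability}, and apply Theorem~\ref{thm:main}. The paper does not reprove stability: it takes the equivalence between \eqref{eq:non-linear-stability} and $1+\alpha\tan\alpha>0$ from Cormier's criterion \cite[Theorem~2.5 and Section~2.2]{Cor25}.

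Your self-contained replacement for that citation in Step~3 does not work as written.

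\begin{itemize}
\item With $\theta_t=\int_0^te^{-(t-s)}m_s\,\d s$ you have $\theta_0=0$. Even for the stationary solution, $\theta_t^{\rm stat}=\alpha(1-e^{-t})$.
\item So your bound $|\theta_t-\alpha|\leq Ce^{-\lambda t}\Wasserstein_1(\nu_0,\nu_\infty^\alpha)$ is false whenever $\alpha\neq0$, i.e.\ whenever $J>0$: take $\nu_0=\nu_\infty^\alpha$ and $t=0$.
\item A local Gr\"onwall/linearization argument at $\alpha$ cannot start from $\theta_0=0$, which need not be near $\alpha$ (in Figure~\ref{fig:mospl}, $\alpha\simeq6.41$).
\item Your perturbation terms are $O(e^{-t}(1+\Wasserstein_1))$, and the final triangle inequality through $\alpha$ leaves the term $|\theta_t^{\rm stat}-\alpha|=|\alpha|e^{-t}$. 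At best you get $\Wasserstein_1(\Law(X_t^{\nu_0}),\nu_\infty^\alpha)\leq Ce^{-\lambda t}(\Wasserstein_1(\nu_0,\nu_\infty^\alpha)+|\alpha|)$.
\end{itemize}

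That is not \eqref{eq:non-linear-stability}. The linear dependence on $\Wasserstein_1(\nu_0,\nu_\infty^\alpha)$ is exactly what the proof of Proposition~\ref{prop:stability-equivalence} needs when it divides by $\varepsilon$ and lets $\varepsilon\to0$.

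The fix is to center at the stationary trajectory.

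\begin{itemize}
\item Set $\psi_t:=\theta_t+\alpha e^{-t}$. Then $X_t=e^{-t}(X_0-\alpha)+\psi_t+G_t$, $\psi_0=\alpha$, and $\dot\psi_t=-\psi_t+m_t$.
\item Since $\EE e^{ie^{-t}(Z-\alpha)}=e^{-e^{-2t}/4}$ for $Z\sim\NNN(\alpha,1/2)$, the variance deficit cancels exactly, and
\[\dot\psi_t=-\psi_t+Je^{-1/4}\cos\psi_t+r_t,\qquad |r_t|\leq Je^{-t}\Wasserstein_1(\nu_0,\nu_\infty^\alpha),\]
with $\psi\equiv\alpha$ when $\nu_0=\nu_\infty^\alpha$.
\item Linearize at $\alpha$, where the rate is $\lambda_\alpha=1+\alpha\tan\alpha$, and run a bootstrap. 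This gives $|\psi_t-\alpha|\leq Ce^{-\lambda t}\Wasserstein_1(\nu_0,\nu_\infty^\alpha)$ for every $\lambda<\min\{1,\lambda_\alpha\}$ and $\nu_0$ sufficiently close to $\nu_\infty^\alpha$.
\item Synchronous coupling then yields $\Wasserstein_1(\Law(X_t^{\nu_0}),\nu_\infty^\alpha)\leq e^{-t}\Wasserstein_1(\nu_0,\nu_\infty^\alpha)+|\psi_t-\alpha|$, which is \eqref{eq:non-linear-stability}.
\end{itemize}

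With this correction, your route is a legitimate and more elementary alternative to citing \cite{Cor25}. It exploits the Gaussian structure of the model and gives an explicit rate. Your sketched alternative uses the kernel $-Je^{-1/4}\sin\alpha\,e^{-t}$, whose resolvent has its pole at $z=-\lambda_\alpha$; it is essentially the content of Cormier's criterion that the paper invokes. Steps~1, 2 and~4 are fine.
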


\paragraph{Contributions}
Our result applies to a broad class of McKean--Vlasov diffusions on $\RR^d$ with smooth bounded interactions, under a linearized stability assumption. The approach requires neither a free-energy gradient-flow structure nor reversibility. The Ornstein--Uhlenbeck--cosine model provides a simple non-gradient illustration of this framework. We also extend the result to the double-well model with linear interaction studied by Tugaut \cite{Tug14}. In the regime where its McKean--Vlasov equation admits three invariant measures, we obtain exponentially long confinement near each of the two stable equilibria.

The proof of the preceding results relies on three main ingredients.

First, we give an explicit construction of the linearized semigroup from the Volterra operators introduced by Cormier \cite{Cor25}, bypassing abstract generation theorems and the characterization of the full generator domain. We construct its action on general test functions, identify it with the first variation of nonlinear observables, and establish its semigroup and regularity properties. This provides the linearized evolution needed for the mild representation of the empirical fluctuations.

Second, we handle the non-compact state space through a separate control of the empirical second moment. Combining this control with approximation on a compact ball allows us to reduce the Wasserstein supremum to a finite family of smooth test functions. For a fixed approximation accuracy and moment bound, this family and its regularity bounds are independent of the number of particles and the time horizon. This independence allows a finite union bound without losing the exponential order of the probability estimates.

Third, we establish an exponential maximal estimate for stochastic convolutions with random predictable kernels depending on the terminal time. Its prefactor grows only linearly with the time horizon, making the estimate suitable for exponentially long intervals. We use it both to control the empirical second moment and to estimate the stochastic term in the mild equation. Together with linear stability and bounds on the quadratic remainder, these ingredients yield the confinement estimate.

\paragraph{Organization of the paper}
Section~\ref{sec:prelim} collects moment and regularity estimates for $(\P_t^\alpha)_{t\geq0}$, together with a Wasserstein estimate and a sensitivity formula for the nonlinear flow. Section~\ref{sec:lin} constructs the linearized semigroup, establishes its regularity properties, and relates its exponential stability to local nonlinear stability. Section~\ref{sec:proof} proves the main confinement theorem. Section~\ref{sec:ou} applies the result to the Ornstein--Uhlenbeck--cosine model, and Section~\ref{sec:double-well} establishes the extension to a double-well model with linear interaction. Finally, Appendix~\ref{app:convolution} proves the exponential maximal estimate for stochastic convolutions, and Appendix~\ref{app:finite-net} constructs the finite smooth net used in the Wasserstein reduction.

\section{Preliminary estimates}\label{sec:prelim}

We first collect moment and regularity estimates for \eqref{eq:mvll}, together with a Wasserstein estimate and a sensitivity formula for the nonlinear flow \eqref{eq:mvl}.

\subsection{Estimates for the linear diffusion}

We denote by $\LLL_\alpha$ and $(\P_t^\alpha)_{t\geq0}$ the generator and Markov semigroup of \eqref{eq:mvll}, respectively. The generator is given by
\begin{equation}\label{eq:def-gen-sg}
\LLL_\alpha g:=\frac12\Delta g+(b+\alpha)\cdot\nabla g.
\end{equation}

\paragraph{Finite second moment of the selected equilibrium $\nu_\infty^\alpha$}
We first show that
\begin{equation}\label{eq:Minfty}
M_\infty:=\int_{\RR^d}|x|^2\,\nu_\infty^\alpha(\d x)<\infty.
\end{equation}
To do so, let $Y^\alpha$ solve \eqref{eq:mvll} with $Y_0^\alpha=x\in\RR^d$, and set $V(x):=|x|^2$. 
The dissipativity of $b$ at infinity, the boundedness of $\alpha$, and Young's inequality give constants $\kappa,C>0$ such that
\begin{equation}\label{eq:frozen-Lyapunov}
\LLL_\alpha V(x) = d+2x\cdot(b(x)+\alpha(x)) \leq C-\kappa V(x).
\end{equation}
Applying Itô's formula and taking expectations yields
\[\frac{\d}{\d t}\EE|Y_t^\alpha|^2\leq-\kappa\EE|Y_t^\alpha|^2+C.\]
Grönwall's lemma therefore gives
\[\EE|Y_t^\alpha|^2\leq e^{-\kappa t}|x|^2+\frac{C}{\kappa}\parent{1-e^{-\kappa t}},\qquad t\geq0.\]
We now use this estimate to control the second moment of $\nu_\infty^\alpha$. Since this moment is not yet known to be finite, we cannot directly integrate the preceding inequality against $\nu_\infty^\alpha$. Instead, we apply \cite[Proposition~4.24]{Hai06}, which justifies this passage by combining invariance with a truncation argument. It follows that $M_\infty\leq C/\kappa<\infty$, which proves \eqref{eq:Minfty}.

\paragraph{Short-time estimates for the Markov semigroup $(\P_t^\alpha)_{t\geq0}$}
Since $\alpha$ is bounded, assumption~\ref{hyp:H1} implies that $b+\alpha$ is dissipative at sufficiently large distances. Therefore, \cite[Corollary~2]{Ebe16} gives constants $C_P\geq1$ and $\lambda_P>0$ such that
\begin{equation}\label{eq:finite-time Wasserstein}
\Wasserstein_1\parent{\Law(Y_t^{\alpha,x}),\Law(Y_t^{\alpha,y})}\leq C_Pe^{-\lambda_Pt}|x-y|,\qquad t\geq0,\quad x,y\in\RR^d,
\end{equation}
where $Y^{\alpha,x}$ denotes the solution of \eqref{eq:mvll} starting from $x$. By the Kantorovich formula, it follows that, for every $g\in C^1(\RR^d)$ with bounded gradient,
\begin{equation}\label{eq:P-gradient}
\norme{\nabla\P_t^\alpha g}{\infty}\leq C_Pe^{-\lambda_Pt}\norme{\nabla g}{\infty},\qquad t\geq0.
\end{equation}

Set $L:=\norme{\nabla(b+\alpha)}{\infty}<\infty$, which is finite since $b$ is globally Lipschitz and $\norme{D\alpha}{\infty}\leq\norme{D_xF}{\infty}$. By the Bismut--Elworthy--Li derivative formula, see for instance \cite[Theorem 2.1 and Remark 2]{EL94}, there exists a constant $C_{L}>0$, depending only on $L$, such that, for every bounded measurable function $g$ and every $0<t\leq1$,
\begin{equation}\label{eq:BEL-bound}
\norme{\nabla\P_t^\alpha g}{\infty}\leq\frac{C_{L}}{\sqrt t}\norme{g}{\infty}.
\end{equation}

We shall also use the following second-order estimate: there exists a constant $C>0$ such that, for every $g\in\XXX$ and every $0<t\leq1$,
\begin{equation}\label{eq:P-hessian}
\norme{\nabla^2\P_t^\alpha g}{\infty}\leq\frac{C}{\sqrt t}\norme{\nabla g}{\infty}.
\end{equation}

\begin{proof}[Proof of \eqref{eq:P-hessian}]
Fix $g\in\XXX$. Setting $u_t:=\nabla\P_t^\alpha g$ and differentiating the Kolmogorov equation in space, we obtain
\[\partial_tu_t=\LLL_\alpha u_t+\nabla(b+\alpha)^\top u_t,\qquad u_0=\nabla g.\]
For fixed $t>0$, consider the map $s\mapsto\P_{t-s}^\alpha u_s$. Differentiating this map gives
\[\frac{\d}{\d s}\parent{\P_{t-s}^\alpha u_s}=\P_{t-s}^\alpha\parent{\partial_su_s-\LLL_\alpha u_s}=\P_{t-s}^\alpha\parent{\nabla(b+\alpha)^\top u_s}.\]
Integrating this identity over $[0,t]$ yields
\begin{equation}\label{eq:gradient-representation}
\nabla\P_t^\alpha g=\P_t^\alpha\nabla g+\int_0^t\P_{t-s}^\alpha\parent{\nabla(b+\alpha)^\top\nabla\P_s^\alpha g}\,\d s,
\end{equation}
where $\P_t^\alpha$ acts componentwise on vector-valued functions. The function $\nabla(b+\alpha)^\top\nabla\P_s^\alpha g$ is bounded, with
\[\norme{\nabla(b+\alpha)^\top\nabla\P_s^\alpha g}{\infty}\leq L\norme{\nabla\P_s^\alpha g}{\infty}\leq LC_Pe^{-\lambda_Ps}\norme{\nabla g}{\infty}.\]
Applying \eqref{eq:BEL-bound} to the terms in \eqref{eq:gradient-representation} justifies differentiation under the time integral, since the resulting singularity $(t-s)^{-1/2}$ is integrable, and gives, for $0<t\leq1$,
\[\begin{aligned}
\norme{\nabla^2\P_t^\alpha g}{\infty}
&\leq \frac{C_L}{\sqrt t}\norme{\nabla g}{\infty}
+C_LL\int_0^t\frac{\norme{\nabla\P_s^\alpha g}{\infty}}{\sqrt{t-s}}\,\d s\\
&\leq C_L\parent{\frac1{\sqrt t}+LC_P\int_0^t\frac{e^{-\lambda_Ps}}{\sqrt{t-s}}\,\d s}\norme{\nabla g}{\infty}\\
&\leq C_L\parent{\frac1{\sqrt t}+2LC_P\sqrt t}\norme{\nabla g}{\infty}\\
&\leq \frac{C_L(1+2LC_P)}{\sqrt t}\norme{\nabla g}{\infty}.
\end{aligned}\]
This proves \eqref{eq:P-hessian}.
\end{proof}

\subsection{Estimates for the nonlinear flow}

We denote the nonlinear law flow associated with \eqref{eq:mvl} by
\begin{equation}\label{eq:nonlinear-flow}
\TTT_t(\mu):=\Law(X_t^\mu),\qquad \mu\in\PPP_1(\RR^d),\quad t\geq0.
\end{equation}

\paragraph{Wasserstein estimate for \eqref{eq:mvl}} 
Under assumptions \ref{hyp:H1} and \ref{hyp:H2}, synchronous coupling and Grönwall's lemma give, for every $T>0$, a constant $C_T<\infty$, independent of $\mu,\nu\in\PPP_1(\RR^d)$, such that
\begin{equation}\label{eq:controle-mf}
\Wasserstein_1(\TTT_t(\mu),\TTT_t(\nu))\leq C_T\Wasserstein_1(\mu,\nu),\qquad 0\leq t\leq T.
\end{equation}
Indeed, coupling the initial conditions optimally and driving both solutions with the same Brownian motion, the Lipschitz bounds on the coefficients and Grönwall's lemma yield \eqref{eq:controle-mf}.

\paragraph{The sensitivity formula for \eqref{eq:mvl}}
Let $\mu\in\PPP_1(\RR^d)$ and write $\mu_s:=\TTT_s(\mu)$. For every $g\in\XXX$ and every $t\geq0$,
\begin{equation}\label{eq:sensitivity-formula}
\ps{g,\TTT_t(\mu)}-\ps{g,\TTT_t(\nu_\infty^\alpha)}=\ps{\P_t^\alpha g,\mu-\nu_\infty^\alpha}+\int_0^t\int_{\RR^d}\nabla\P_{t-s}^\alpha g(y)\cdot\parent{\Phi(\mu_s)(y)-\alpha(y)}\,\mu_s(\d y)\,\d s.
\end{equation}
This identity follows from \cite[Proposition~2.11]{Cor25}, applied with initial law $\mu$ and drift perturbation $\Phi(\mu_s)-\alpha$, together with the invariance of $\nu_\infty^\alpha$ under $(\P_t^\alpha)_{t\geq0}$. For completeness, we give the short proof using Itô's formula, as in \cite[Remark~2.15]{Cor25}.

\begin{proof}
The case $t=0$ is immediate. Fix $g\in\XXX$, $t>0$, and $\mu\in\PPP_1(\RR^d)$. Applying Itô's formula to $\P_{t-s}^\alpha g(X_s^\mu)$ and using the backward equation $\partial_s\P_{t-s}^\alpha g=-\LLL_\alpha\P_{t-s}^\alpha g$ gives
\[\d\parent{\P_{t-s}^\alpha g(X_s^\mu)}=\nabla\P_{t-s}^\alpha g(X_s^\mu)\cdot\parent{\Phi(\mu_s)(X_s^\mu)-\alpha(X_s^\mu)}\,\d s+\nabla\P_{t-s}^\alpha g(X_s^\mu)\cdot\d B_s.\]
Integrating between $0$ and $t$ and taking expectations therefore gives
\[\ps{g,\TTT_t(\mu)}=\ps{\P_t^\alpha g,\mu}+\int_0^t\int_{\RR^d}\nabla\P_{t-s}^\alpha g(y)\cdot\parent{\Phi(\mu_s)(y)-\alpha(y)}\,\mu_s(\d y)\,\d s.\]
Finally, the invariance of $\nu_\infty^\alpha$ for both dynamics gives $\ps{g,\TTT_t(\nu_\infty^\alpha)}=\ps{g,\nu_\infty^\alpha}=\ps{\P_t^\alpha g,\nu_\infty^\alpha}$. Subtracting this identity from the preceding one proves \eqref{eq:sensitivity-formula}.

\end{proof}

Writing $\nu_s:=\mu_s-\nu_\infty^\alpha$ and splitting $\mu_s=\nu_\infty^\alpha+\nu_s$ in \eqref{eq:sensitivity-formula} separates the integral into a term linear in $\nu_s$ and a term quadratic in $\nu_s$. Once the linearized semigroup has been constructed, resolving the linear term gives the representation stated in Remark~\ref{lem:nonlinear-variation-of-constants}.

\section{The linearized Fokker--Planck semigroup} \label{sec:lin}

\subsection{Construction of the linearized semigroup}

We now define the central object of our analysis, the semigroup $(\S_t)_{t\geq0}$. We first give an informal presentation of the linearization around $\nu_\infty^\alpha$, before proceeding to the rigorous construction of this semigroup.

\paragraph{Heuristic derivation}
The stability analysis of the selected invariant measure $\nu_\infty^\alpha$ is based on the Fokker--Planck equation linearized around it. Fix an initial law $\mu_0\in\PPP_1(\RR^d)$ and write $\mu_t:=\Law(X_t^{\mu_0})$, where $X^{\mu_0}$ solves \eqref{eq:mvl}. In the sense of distributions, $(\mu_t)_{t\geq0}$ satisfies the nonlinear Fokker--Planck equation
\begin{equation}\label{eq:FP-general}
\partial_t\mu_t=\frac12\Delta\mu_t-\nabla\!\cdot\parent{\parent{b+\Phi(\mu_t)}\mu_t}.
\end{equation}
Set $\nu_t:=\mu_t-\nu_\infty^\alpha$. Since $\Phi$ is linear, $\Phi(\mu_t)=\alpha+\Phi(\nu_t)$. Moreover, $\nu_\infty^\alpha$ satisfies $\LLL_\alpha^\ast\nu_\infty^\alpha=0$. Subtracting this stationary equation from \eqref{eq:FP-general} therefore gives
\begin{equation}\label{eq:linearized-forward}
\partial_t\nu_t=\LLL_\alpha^\ast\nu_t-\nabla\!\cdot\parent{\nu_\infty^\alpha\Phi(\nu_t)}-\nabla\!\cdot\parent{\nu_t\Phi(\nu_t)}.
\end{equation}
The last term is quadratic in $\nu_t$. The linearized operator acting on signed measures is thus
\[\AAA^\ast\eta:=\LLL_\alpha^\ast\eta-\nabla\!\cdot\parent{\nu_\infty^\alpha\Phi(\eta)}.\]
By duality, the corresponding operator acting on test functions $g\in\XXX$ is
\begin{equation}\label{eq:A-dual}
\AAA g:=\LLL_\alpha g+\KKK g,\qquad \KKK g(y):=\int_{\RR^d}F(x,y)\cdot\nabla g(x)\,\nu_\infty^\alpha(\d x).
\end{equation}
In the rank-one case $d=1$ and $F(x,y)=\phi(y)$, this reduces to $\AAA g=\LLL_\alpha g+\phi\ps{g',\nu_\infty^\alpha}$. 
The operator $\KKK$ annihilates constants. Its following properties only require bounded first and second derivatives of $F$ and $\nu_\infty^\alpha\in\PPP_1(\RR^d)$. Indeed, for every $g\in\XXX$,
\begin{equation}\label{eq:K-properties}
\begin{aligned}
|\KKK g(y)|
&\leq\parent{|F(0,0)|+L_F\int_{\RR^d}|x|\,\nu_\infty^\alpha(\d x)+L_F|y|}\norme{\nabla g}{\infty},\\
\norme{\nabla\KKK g}{\infty}+\norme{\nabla^2\KKK g}{\infty}
&\leq L_F\norme{\nabla g}{\infty}.
\end{aligned}
\end{equation}
The first bound follows from the Lipschitz property of $F$, while the second follows by differentiation under the integral. Consequently, $\KKK g\in\XXX$, with at most linear growth. These estimates do not require $F$ to be bounded.

At this stage, the linearization gives a candidate infinitesimal operator $\AAA$. It remains to construct the corresponding evolution and to verify that it is a semigroup whose infinitesimal operator agrees with $\AAA$ on smooth test functions. We already know the evolution associated with $\LLL_\alpha$ for \eqref{eq:mvll}, namely $(\P_t^\alpha)_{t\geq0}$. Observe that differentiating the expected observable with respect to the initial law recovers the action of $\P_t^\alpha$ on $g$. Indeed, setting $u_t^g(\mu):=\EE_\mu[g(Y_t^\alpha)]=\ps{\P_t^\alpha g,\mu}$, we have
\[u_t^g(\mu)-u_t^g(\nu_\infty^\alpha)=\ps{\P_t^\alpha g, \, \mu-\nu_\infty^\alpha}.\]
Since $u_t^g$ is linear in $\mu$, $\P_t^\alpha g$ is a representative of the flat derivative
\[\frac{\delta u_t^g}{\delta m}(\nu_\infty^\alpha)(y)=\P_t^\alpha g(y).\]

We now follow the same idea for \eqref{eq:mvl}, by formally linearizing $v_t^g(\mu):=\ps{g,\TTT_t(\mu)}$ around $\nu_\infty^\alpha$. Write $\mu_s:=\TTT_s(\mu)$. Since $\TTT_t(\nu_\infty^\alpha)=\nu_\infty^\alpha$, the sensitivity formula \eqref{eq:sensitivity-formula} gives
\[v_t^g(\mu)-v_t^g(\nu_\infty^\alpha)=\ps{\P_t^\alpha g, \,\mu-\nu_\infty^\alpha}+\int_0^t\int_{\RR^d}\nabla\P_{t-s}^\alpha g(x)\cdot\Phi(\mu_s-\nu_\infty^\alpha)(x)\,\mu_s(\d x)\,\d s.\]
Writing $F^x(z):=F(x,z)$, differentiating this identity at $\nu_\infty^\alpha$ gives formally, up to an additive constant in $y$,
\[\frac{\delta v_t^g}{\delta m}(\nu_\infty^\alpha)(y)=\P_t^\alpha g(y)+\int_0^t\int_{\RR^d}\nabla\P_{t-s}^\alpha g(x)\cdot\frac{\delta v_s^{F^x}}{\delta m}(\nu_\infty^\alpha)(y)\,\nu_\infty^\alpha(\d x)\,\d s.\]
Indeed, $\Phi(\mu_s)(x)=v_s^{F^x}(\mu)$, which explains the derivative appearing in the integral. The variation of the measure $\mu_s$ contributes nothing at first order, since it is multiplied by $\Phi(\mu_s-\nu_\infty^\alpha)$, which vanishes at equilibrium.
This suggests seeking $\S_tg$ as a representative of the flat derivative of the nonlinear evolution at equilibrium
\[\S_tg(y)=\frac{\delta v_t^g}{\delta m}(\nu_\infty^\alpha)(y).\]
We will show below that $(\S_t)_{t\geq0}$ naturally inherits the semigroup property of the nonlinear law flow $(\TTT_t)_{t\geq0}$.
 In particular, taking the interaction observables $g=F^x$ componentwise gives a closed linear Volterra equation for the family $\S_tF^x$
\[\S_tF^x(y)=\P_t^\alpha F^x(y)+\int_0^t\int_{\RR^d}D_z\parent{\P_{t-s}^\alpha F^x}(z)\S_sF^z(y)\,\nu_\infty^\alpha(\d z)\,\d s.\]
The inner integral is precisely $\Theta_{t-s}\parent{z\mapsto\S_sF^z(y)}(x)$, with the operators $\Theta_t$ introduced in Step~1. Their Volterra resolvent $\Omega_t$, introduced in Step~2, will allow us to solve this equation. Once $\S_tF^x$ is constructed, the preceding formula for a general observable determines $\S_tg$.

We now make this construction rigorous. The identification with the flat derivative and the semigroup property will be justified afterwards.

\paragraph{Construction} 
In classical abstract semigroup theory, one chooses a Banach space and a domain for the operator, and then applies a semigroup generation theorem. Here, we provide a new explicit construction of the semigroup associated with $\AAA$, bypassing abstract generation theorems and the characterization of the full generator domain. The construction proceeds directly through Volterra operators, which describe how the linearized interaction modifies the evolution under $(\P_t^\alpha)_{t\geq0}$. Its explicit nature makes the role of the semigroup clear. Our construction of $\S_t$ is given in Step~3, using the operators introduced in Steps~1 and~2 following \cite{Cor25}. In Step~4, we verify that the resulting family is indeed a semigroup and that its infinitesimal operator agrees with $\AAA$ on the test functions considered.

Set $\H:=L^2(\nu_\infty^\alpha, \RR^d)$, equipped with the norm $\norme{h}{2}:=\parent{\int_{\RR^d}|h(y)|^2\,\nu_\infty^\alpha(\d y)}^{1/2}$. We denote by $\H^\ast$ its continuous dual, and by $\mathcal L(\H)$ the space of bounded linear operators from $\H$ to itself.

\paragraph{Step 1. The operators associated with \eqref{eq:mvll}}
For every $x\in\RR^d$, write $F^x(y):=F(x,y)$. For a particle starting from $y$, its expected influence at time $t$ on a particle located at $x$ is given by $\P_t^\alpha F^x(y)$. If we perturb its initial position from $y$ to $y+\varepsilon h(y)$, the first-order change in this influence is $\varepsilon \nabla_y\parent{\P_t^\alpha F^x}(y)h(y)$. Averaging this contribution over the initial equilibrium law $\nu_\infty^\alpha$ leads to the operator
\begin{equation}\label{eq:Theta-definition}
\Theta_t(h)(x):=\int_{\RR^d}\nabla_y\parent{\P_t^\alpha F^x}(y)h(y)\,\nu_\infty^\alpha(\d y),\qquad h\in\H,\quad t\geq0.
\end{equation}
The same construction applied to a test function $g\in\XXX$ gives the scalar-valued linear functional
\begin{equation}\label{eq:Theta-g-definition}
\Theta_t^g(h):=\int_{\RR^d}\nabla\P_t^\alpha g(y)\cdot h(y)\,\nu_\infty^\alpha(\d y),\qquad h\in\H,\quad t\geq0.
\end{equation}
In particular, componentwise,
\[\Theta_t^{F^x}(h)=\Theta_t(h)(x).\]

Note that $\Theta_t$ is an operator on $\H$, whereas $\Theta_t^g$ is a scalar-valued linear functional on $\H$. Indeed, \eqref{eq:P-gradient} and the Cauchy--Schwarz inequality give
\[\norme{\Theta_t(h)}{\infty}\leq e^{Lt}\norme{\nabla_yF}{\infty}\norme{h}{2},\qquad \vabs{\Theta_t^g(h)} \leq e^{Lt}\norme{\nabla g}{\infty}\norme{h}{2}.\]
Since $\nu_\infty^\alpha$ is a probability measure, $\norme{\Theta_t(h)}{2}\leq\norme{\Theta_t(h)}{\infty}$. Thus, $\Theta_t\in\mathcal L(\H)$ and $\Theta_t^g\in\H^\ast$, with operator norms bounded on finite time intervals. 
These bounds will be used to construct the Volterra resolvent below.

We now relate these operators to variations of the initial law for \eqref{eq:mvll}. For $g\in\XXX$, define
\[u_t^g(\mu):=\ps{\P_t^\alpha g,\mu}=\EE_\mu[g(Y_t^\alpha)],\qquad \mu\in\PPP_1(\RR^d),\]
where $Y^\alpha$ solves \eqref{eq:mvll} with initial law $\mu$. 
Since $\P_t^\alpha g$ is continuously differentiable with bounded gradient, the formula for the Lions derivative of a linear functional gives
\[\partial_\mu u_t^g(\mu)(y)=\nabla\P_t^\alpha g(y),\qquad \mu\in\PPP_2(\RR^d),\quad t\geq0,\]
see \cite[Chapter~5]{CD18}. Consequently, for every $h\in\H$ and $t \geq 0$,
\[\Theta_t^g(h)=\int_{\RR^d}\partial_\mu u_t^g(\nu_\infty^\alpha)(y)\cdot h(y)\,\nu_\infty^\alpha(\d y).\]

\paragraph{Step 2. The Volterra resolvent}
We now perturb the initial positions in the same way, but let the particles
evolve according to the McKean--Vlasov equation~\eqref{eq:mvl}. Unlike in Step~1, the interaction field is no longer kept fixed at $\alpha$, but evolves through $\Phi(\mu_t)$, where $\mu_t$ denotes the law at time $t$. We seek an operator $\Omega_t$ such that $\varepsilon\Omega_t(h)(x)$ describes the first-order change in the interaction field at time $t$ and position $x$, induced by perturbing the initial positions from $y$ to $y+\varepsilon h(y)$.

Let $Z$ have law $\nu_\infty^\alpha$, fix $h\in\H$, and consider the perturbed initial law $\mu_0^\varepsilon:=\Law(Z+\varepsilon h(Z))$. Set $\mu_s^\varepsilon:=\Law(X_s^{\mu_0^\varepsilon})$. Applying Itô's formula to $\P_{t-s}^\alpha F^x(X_s^{\mu_0^\varepsilon})$ and taking expectations gives, by invariance of $\nu_\infty^\alpha$,
\[\begin{aligned}
\Phi(\mu_t^\varepsilon)(x)&= \alpha(x) + \int_{\RR^d}\P_t^\alpha F^x(y)\,(\mu_0^\varepsilon-\nu_\infty^\alpha)(\d y)\\
	&\quad+\int_0^t\int_{\RR^d}\nabla_y\parent{\P_{t-s}^\alpha F^x}(y)\parent{\Phi(\mu_s^\varepsilon)(y)-\alpha(y)}\,\mu_s^\varepsilon(\d y)\,\d s.
\end{aligned}\]
To identify the equation satisfied by the first-order response, write formally $\Phi(\mu_s^\varepsilon)-\alpha=\varepsilon\Omega_s(h)+o(\varepsilon)$. The first integral equals $\varepsilon\Theta_t(h)(x)+o(\varepsilon)$. In the time integral, the factor $\Phi(\mu_s^\varepsilon)-\alpha$ is already of order $\varepsilon$, so replacing $\d \mu_s^\varepsilon$ by $\d \nu_\infty^\alpha$ does not change the first-order contribution. By the definition of $\Theta$, identifying the coefficients of $\varepsilon$ therefore motivates defining $\Omega$ as the Volterra resolvent of $\Theta$, satisfying
\begin{equation}\label{eq:Omega-Volterra}
\Omega_t=\Theta_t+\int_0^t\Theta_{t-s}\circ\Omega_s\,\d s
=\Theta_t+\int_0^t\Omega_s\circ\Theta_{t-s}\,\d s.
\end{equation}
The preceding finite-time bounds allow us to construct $\Omega$ by the Neumann series in \cite[Section~2.2, equations~(11)--(12)]{Cor25}, which also gives the second identity. Its interpretation as the first-order response of the nonlinear flow will be justified below.

Once $\Omega$ is constructed, we define, for every $g\in\XXX$, the associated scalar-valued linear functional
\begin{equation}\label{eq:Omega-g-definition}
\Omega_t^g:=\Theta_t^g+\int_0^t\Theta_{t-s}^g\circ\Omega_s\,\d s.
\end{equation}
It also satisfies
\begin{equation}\label{eq:Omega-g-Volterra}
\Omega_t^g=\Theta_t^g+\int_0^t\Omega_{t-s}^g\circ\Theta_s\,\d s.
\end{equation}

Uniqueness among locally bounded solutions of \eqref{eq:Omega-Volterra} and \eqref{eq:Omega-g-Volterra} follows from Grönwall's lemma. Note that $\Omega^g$ is associated with the resolvent $\Omega$, but is not itself the resolvent of $\Theta^g$, since $\Theta_t^g$ is a linear functional from $\H$ to $\RR$, rather than an operator on $\H$.

For the interaction observables $g = F^x$, these identities give, componentwise,
\[\Omega_t^{F^x}(h)=\Omega_t(h)(x).\]
Moreover, the preceding bounds and \eqref{eq:Omega-g-definition} imply that, for every $T>0$, there exists $C_T<\infty$ such that, for every $g\in\XXX$,
\[\sup_{0\leq t\leq T}\norme{\Omega_t^g}{\H^\ast}\leq C_T\norme{\nabla g}{\infty}.\]

\paragraph{Step 3. Construction of $\S_t$}
We now use these operators to construct $(\S_t)_{t\geq0}$ on $\XXX$. We will verify below that this family is a semigroup whose infinitesimal operator agrees with $\AAA$ on $\XXX$. For $g\in\XXX$ and $t\geq0$, define, componentwise for vector-valued functions,
\begin{equation}\label{eq:S-Omega-representation}
\S_tg(y):=
\begin{cases}
\displaystyle \P_t^\alpha F^x(y)+\int_0^t\Omega_{t-s}(p_s^y)(x)\,\d s,
& \text{if }g=F^x,\\[1em]
\displaystyle \P_t^\alpha g(y)+\int_0^t\Theta_{t-s}^g(q_s^y)\,\d s,
& \text{otherwise},
\end{cases}
\end{equation}
where, for $s\geq0$ and $y\in\RR^d$, we set
\[p_s^y(z):=\P_s^\alpha F^z(y),\qquad q_s^y(z):=\S_sF^z(y),\qquad z\in\RR^d.\]
We first construct $\S_tF^x$ through the first formula, which defines the fields $q_s^y$, and then use the second formula for a general test function. Both formulas are well defined, since the boundedness of $F$ gives $p_s^y\in\H$ with $\norme{p_s^y}{2}\leq\norme{F}{\infty}$, the finite-time bounds on $\Omega$ give bounded fields $q_s^y$ on finite time intervals, and those on $\Theta^g$ ensure the integrability of the second formula. When $F(x,y)$ does not depend on $x$, these formulas simplify, as illustrated by \eqref{eq:S-Omega-representation-OU-cos} for the OU--cosine model.

The definition \eqref{eq:S-Omega-representation} is consistent because, taking $g=F^x$ in the second formula, we recover the first. Indeed, \eqref{eq:Omega-Volterra} and Fubini's theorem give
\begin{equation}\label{eq:construction-qty}
q_t^y=p_t^y+\int_0^t\Theta_{t-s}(q_s^y)\,\d s=p_t^y+\int_0^t\Omega_{t-s}(p_s^y)\,\d s.
\end{equation}
Moreover, the spatial regularity of $\P^\alpha$, the gradient and Hessian bounds \eqref{eq:P-gradient}--\eqref{eq:P-hessian}, and the finite-time bounds on the Volterra operators justify differentiating \eqref{eq:S-Omega-representation} twice with respect to $y$ under the integrals. Consequently, for every $g\in\XXX$ and $t\geq0$, $\S_tg$ is twice continuously differentiable with bounded gradient and Hessian, hence $\S_tg\in\XXX$.

We first identify the functionals $\Omega_t^g$ in terms of $\nabla \S_tg$. Then, we establish their relation with the first variation of the nonlinear flow defined by
\[v_t^g(\mu):=\EE[g(X_t^\mu)],\qquad \mu\in\PPP_1(\RR^d).\]
For the interaction observables $g=F^x$, this probabilistic interpretation is established in \cite[Proposition~2.22]{Cor25}.

\begin{lem}[First-order approximation of the interaction field]\label{lem:interaction-expansion}
For every $T>0$, there exists $C_T<\infty$ such that the following estimate holds for every $\mu\in\PPP_1(\RR^d)$. Write $\mu_s:=\TTT_s(\mu)$ and set $\varepsilon:=\Wasserstein_1(\mu,\nu_\infty^\alpha)$. The perturbation of the interaction field $\Phi(\mu_t)-\alpha$ is approximated by $\int_{\RR^d}q_t^y\,(\mu-\nu_\infty^\alpha)(\d y)$ with a quadratic error:
\begin{equation}\label{eq:interaction-quadratic-expansion}
\sup_{0\leq t\leq T}\norme{\Phi(\mu_t)-\alpha-\int_{\RR^d}q_t^y\,(\mu-\nu_\infty^\alpha)(\d y)}{\infty}\leq C_T\varepsilon^2.
\end{equation}
\end{lem}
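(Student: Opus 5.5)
We derive a Volterra-type equation for the error and close it with Grönwall's lemma. Write $\rho_t:=\Phi(\mu_t)-\alpha$, viewed for each $t$ as a bounded vector field. Write also $\eta:=\mu-\nu_\infty^\alpha$ and $r_t:=\rho_t-\int q_t^y\,\eta(\d y)$.

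\textbf{Exact equation for $\rho$.} Apply the sensitivity formula \eqref{eq:sensitivity-formula} componentwise with $g=F^x$. This is legitimate since $F^x\in\XXX$ by \ref{hyp:H1}. It gives
\[
\rho_t(x)=\ps{\P_t^\alpha F^x,\eta}+\int_0^t\int\nabla\P_{t-s}^\alpha F^x(y)\,\rho_s(y)\,\mu_s(\d y)\,\d s .
\]
Now split $\mu_s=\nu_\infty^\alpha+(\mu_s-\nu_\infty^\alpha)$. The $\nu_\infty^\alpha$ part is exactly $\Theta_{t-s}(\rho_s)(x)$, by \eqref{eq:Theta-definition}. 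Hence
\[
\rho_t=\ps{p_t^{\cdot},\eta}+\int_0^t\Theta_{t-s}(\rho_s)\,\d s+E_t,\qquad E_t(x):=\int_0^t\int\nabla\P_{t-s}^\alpha F^x\,\rho_s\,\d(\mu_s-\nu_\infty^\alpha)\,\d s,
\]
where $\ps{p_t^\cdot,\eta}(x)=\int\P_t^\alpha F^x(y)\,\eta(\d y)$.

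\textbf{Equation for the linear candidate.} Integrate the first identity of \eqref{eq:construction-qty} against $\eta(\d y)$. Fubini's theorem applies because $q_s^y$ is bounded uniformly on $[0,T]$ and grows at most linearly in $y$ (it is Lipschitz in $y$), while $\eta$ has a finite first moment. This shows that $\bar q_t:=\int q_t^y\,\eta(\d y)$ solves the same equation without $E$:
\[
\bar q_t=\ps{p_t^\cdot,\eta}+\int_0^t\Theta_{t-s}(\bar q_s)\,\d s .
\]
Subtracting the two equations gives $r_t=\int_0^t\Theta_{t-s}(r_s)\,\d s+E_t$. Using $\norme{\Theta_u(h)}{\infty}\leq e^{Lu}\norme{\nabla_yF}{\infty}\norme{h}{2}\leq C_T\norme{h}{\infty}$, Grönwall's lemma yields
\[
\sup_{t\leq T}\norme{r_t}{\infty}\leq C_T\sup_{t\leq T}\norme{E_t}{\infty}.
\]

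\textbf{Bounding $E_t$ quadratically.} This is the main step. First, $\norme{\rho_s}{\infty}\leq C\varepsilon$. Indeed, $\rho_s(x)=\int F^x\,\d(\mu_s-\nu_\infty^\alpha)$, and $F^x$ is $L_F$-Lipschitz, so $\norme{\rho_s}{\infty}\leq L_F\Wasserstein_1(\mu_s,\nu_\infty^\alpha)$. By \eqref{eq:controle-mf} together with $\TTT_s(\nu_\infty^\alpha)=\nu_\infty^\alpha$, this is at most $L_FC_T\varepsilon$.

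Second, the integrand $y\mapsto\nabla\P_{t-s}^\alpha F^x(y)\rho_s(y)$ must be Lipschitz with constant $O(\varepsilon)$, uniformly in $x$, and with at most a $(t-s)^{-1/2}$ singularity. For the first factor, $\norme{\nabla\P_{u}^\alpha F^x}{\infty}\leq C_P\norme{\nabla F}{\infty}$ by \eqref{eq:P-gradient}, and $\norme{\nabla^2\P_u^\alpha F^x}{\infty}\leq Cu^{-1/2}\norme{\nabla F}{\infty}$ by \eqref{eq:P-hessian} for $u\leq1$; for $u>1$ one writes $\P_u^\alpha=\P_1^\alpha\P_{u-1}^\alpha$ and combines the two bounds. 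For the second factor, $\nabla\rho_s(y)=\int\nabla_xF(y,z)\,(\mu_s-\nu_\infty^\alpha)(\d z)$. Since $\nabla_xF(y,\cdot)$ is $L_F$-Lipschitz by the $C^2_b$ bound, this gives $\norme{\nabla\rho_s}{\infty}\leq L_FC_T\varepsilon$. Combining these, the Lipschitz constant of the integrand is at most $C_T\varepsilon\,(1+(t-s)^{-1/2})$. The Kantorovich duality \eqref{eq:W1}, applied to the integrand minus its value at $0$, then bounds the inner integral by $C_T\varepsilon(1+(t-s)^{-1/2})\,\Wasserstein_1(\mu_s,\nu_\infty^\alpha)\leq C_T\varepsilon^2(1+(t-s)^{-1/2})$. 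Integrating in $s$ gives $\sup_{t\leq T}\norme{E_t}{\infty}\leq C_T\varepsilon^2$, which proves \eqref{eq:interaction-quadratic-expansion}.

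The main obstacle is this last step. It needs the Lipschitz control of $\rho_s$ in $y$, and hence the second derivatives of $F$, as well as the integrable Hessian singularity from \eqref{eq:P-hessian}. Both are required to use $\Wasserstein_1$ rather than total variation. A minor technical point is justifying Fubini's theorem in the second step, which uses the finite first moment of $\eta$.
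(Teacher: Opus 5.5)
Your proposal is correct and follows essentially the same route as the paper. You apply the sensitivity formula to $F^x$, split $\mu_s=\nu_\infty^\alpha+(\mu_s-\nu_\infty^\alpha)$ to isolate the $\Theta$-Volterra term, and bound the remainder by $C_T\varepsilon^2$ using the Lipschitz bounds on $\nabla\P_{t-s}^\alpha F^x$ and $\Phi(\mu_s)-\alpha$, with the integrable $(t-s)^{-1/2}$ Hessian singularity; you then compare with \eqref{eq:construction-qty} integrated against $\mu-\nu_\infty^\alpha$ and close with Gr\"onwall. One minor point: the Fubini step is simpler than you suggest, since $q_s^y(z)$ is bounded in both variables and $\mu-\nu_\infty^\alpha$ is a finite signed measure, so no first-moment argument is needed.
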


\begin{proof}
Fix $T>0$ and $\mu\in\PPP_1(\RR^d)$, and use the notation of the statement. By \eqref{eq:controle-mf} and invariance,
\begin{equation} \label{eq:controle-mf-inv}
\Wasserstein_1(\mu_s,\nu_\infty^\alpha)\leq C_T\varepsilon,\qquad 0\leq s\leq T.
\end{equation}
Applying \eqref{eq:W1} to scalar projections of $F(x,\cdot)$ and $\nabla_xF(x,\cdot)$ therefore gives
\begin{equation}\label{eq:interaction-first-order-bounds}
\norme{\Phi(\mu_s)-\alpha}{\infty}\leq\norme{\nabla_yF}{\infty}C_T\varepsilon,\qquad
\norme{\nabla_x\parent{\Phi(\mu_s)-\alpha}}{\infty}\leq\norme{\nabla_x\nabla_yF}{\infty}C_T\varepsilon.
\end{equation}
These bounds, the product rule, and \eqref{eq:W1}, together with \eqref{eq:P-gradient}--\eqref{eq:P-hessian}, give, for $0\leq t\leq T$,
\[\begin{aligned}
& \sup_{x\in\RR^d}\vabs{\int_0^t\int_{\RR^d}D_z\parent{\P_{t-s}^\alpha F^x}(z)\parent{\Phi(\mu_s)(z)-\alpha(z)}\,(\mu_s-\nu_\infty^\alpha)(\d z)\,\d s}\\
	&\qquad \leq C_T\varepsilon^2\int_0^t\parent{1+(t-s)^{-1/2}}\,\d s \leq C_T\varepsilon^2.
\end{aligned}\]
Here, the estimates are uniform in $x$ because the derivatives of $F^x$ are uniformly bounded; the short-time bounds are extended to $[0,T]$ by the semigroup property of $\P^\alpha$.
We now apply \eqref{eq:sensitivity-formula} componentwise to $g=F^x$ and split $\mu_s=\nu_\infty^\alpha+(\mu_s-\nu_\infty^\alpha)$. Since
\[v_t^{F^x}(\mu)-v_t^{F^x}(\nu_\infty^\alpha)=\Phi(\mu_t)(x)-\alpha(x),\]
the quadratic estimate just proved gives
\[\Phi(\mu_t)-\alpha=\int_{\RR^d}p_t^y\,(\mu-\nu_\infty^\alpha)(\d y)+\int_0^t\Theta_{t-s}\parent{\Phi(\mu_s)-\alpha}\,\d s+O_T(\varepsilon^2),\]
uniformly in space and over $t\in[0,T]$. We compare this equation with the identity obtained by integrating \eqref{eq:construction-qty} against $(\mu-\nu_\infty^\alpha)(\d y)$ and applying Fubini's theorem:
\[\int_{\RR^d}q_t^y\,(\mu-\nu_\infty^\alpha)(\d y)=\int_{\RR^d}p_t^y\,(\mu-\nu_\infty^\alpha)(\d y)+\int_0^t\Theta_{t-s}\parent{\int_{\RR^d}q_s^y\,(\mu-\nu_\infty^\alpha)(\d y)}\,\d s.\]
Subtracting these two equations cancels the terms involving $p_t^y$. Using the finite-time bound $\norme{\Theta_r(h)}{\infty}\leq C_T\norme{h}{\infty}$ for bounded vector fields $h$, Grönwall's lemma proves \eqref{eq:interaction-quadratic-expansion}.
\end{proof}

\begin{lem}[Identification with the Volterra functionals]\label{lem:Omega-S-identification}
For every $g\in\XXX$, every $t\geq0$, and every $h\in\H$,
\begin{equation}\label{eq:Omega-S-identification}
\Omega_t^g(h)=\int_{\RR^d}\nabla \S_tg(y)\cdot h(y)\,\nu_\infty^\alpha(\d y).
\end{equation}
Moreover, $\S_tg$ represents the flat derivative of $v_t^g$ at $\nu_\infty^\alpha$, which is defined up to an additive constant. With this choice of representative,
\begin{equation}\label{eq:S-flat-derivative}
\S_tg(y)=\frac{\delta v_t^g}{\delta m}(\nu_\infty^\alpha)(y).
\end{equation}
The restriction of $v_t^g$ to $\PPP_2(\RR^d)$ is Lions differentiable at $\nu_\infty^\alpha$, with a version of its derivative given by
\begin{equation}\label{eq:S-Lions-derivative}
\nabla \S_tg(y)=\partial_\mu v_t^g(\nu_\infty^\alpha)(y).
\end{equation}
\end{lem}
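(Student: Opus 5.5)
We prove the three assertions in order, starting with the identification \eqref{eq:Omega-S-identification}, which is purely algebraic. The flat and Lions derivative statements then follow from Lemma~\ref{lem:interaction-expansion}.

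\textbf{Step 1: the identity \eqref{eq:Omega-S-identification}.} Differentiate the second formula in \eqref{eq:S-Omega-representation} in $y$; this is justified by the regularity remarks after that definition. Pairing with $h\in\H$ against $\nu_\infty^\alpha$ gives
\[\int\nabla\S_tg\cdot h\,\d\nu_\infty^\alpha=\Theta_t^g(h)+\int_0^t\Theta_{t-s}^g\Big(z\mapsto\int\nabla_y\S_sF^z(y)h(y)\,\nu_\infty^\alpha(\d y)\Big)\d s,\]
after exchanging the $y$-integral with $\Theta^g_{t-s}$ and the time integral by Fubini. Comparing with \eqref{eq:Omega-g-definition}, it suffices to prove the special case $g=F^x$, namely
\[\Omega_s(h)(x)=\int\nabla_y\S_sF^x(y)h(y)\,\nu_\infty^\alpha(\d y).\]
For this, define $R_s(h)(x):=\int\nabla_y q_s^y(x)h(y)\,\nu_\infty^\alpha(\d y)$. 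Differentiating \eqref{eq:construction-qty} in $y$ and pairing with $h$ shows that $R$ satisfies $R_t=\Theta_t+\int_0^t\Theta_{t-s}\circ R_s\,\d s$. Here the $p$-term gives exactly $\Theta_t(h)$ by \eqref{eq:Theta-definition}. This is \eqref{eq:Omega-Volterra}, and $R$ is locally bounded, so uniqueness of locally bounded solutions (Grönwall) gives $R=\Omega$.

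\textbf{Step 2: the flat derivative \eqref{eq:S-flat-derivative}.} Fix $\mu\in\PPP_1$ and set $\mu^\theta:=\nu_\infty^\alpha+\theta(\mu-\nu_\infty^\alpha)$, so that $\Wasserstein_1(\mu^\theta,\nu_\infty^\alpha)\le\theta\,\Wasserstein_1(\mu,\nu_\infty^\alpha)$. We show that
\[v_t^g(\mu^\theta)-v_t^g(\nu_\infty^\alpha)=\theta\ps{\S_tg,\mu-\nu_\infty^\alpha}+O_T(\theta^2).\]
Start from \eqref{eq:sensitivity-formula} for $\mu^\theta$. Split $\mu^\theta_s=\nu_\infty^\alpha+(\mu^\theta_s-\nu_\infty^\alpha)$. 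The cross term is $O(\theta^2)$, exactly as in the proof of Lemma~\ref{lem:interaction-expansion}. In the remaining linear term, replace $\Phi(\mu^\theta_s)-\alpha$ by $\theta\int q_s^y(\mu-\nu_\infty^\alpha)(\d y)$ at cost $O(\theta^2)$, using \eqref{eq:interaction-quadratic-expansion} together with the integrability of $\nabla\P^\alpha_{t-s}g$ against $\nu_\infty^\alpha$. Fubini then turns the first-order term into
\[\theta\Big\langle \P_t^\alpha g+\int_0^t\Theta^g_{t-s}(q^y_s)\,\d s,\ \mu-\nu_\infty^\alpha\Big\rangle=\theta\ps{\S_tg,\mu-\nu_\infty^\alpha}.\]
This gives $\frac{\d}{\d\theta}\big|_{0^+}v_t^g(\mu^\theta)=\ps{\S_tg,\mu-\nu_\infty^\alpha}$, which is the defining property of the flat derivative at $\nu_\infty^\alpha$. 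The additive constant is fixed by this particular representative.

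\textbf{Step 3: the Lions derivative \eqref{eq:S-Lions-derivative}.} Take $Z\sim\nu_\infty^\alpha$, $H\in L^2$ with $H=h(Z)$ and $\|H\|_{L^2}$ small, and $\mu:=\Law(Z+H)$. Then $\Wasserstein_1(\mu,\nu_\infty^\alpha)\le\|H\|_{L^2}=:\varepsilon$. Repeating Step 2 with this $\mu$ gives
\[v_t^g(\mu)-v_t^g(\nu_\infty^\alpha)=\EE[\S_tg(Z+H)-\S_tg(Z)]+O(\varepsilon^2).\]
Since $\S_tg\in\XXX$ has a bounded Hessian, a Taylor expansion yields
\[\EE[\S_tg(Z+H)-\S_tg(Z)]=\EE[\nabla\S_tg(Z)\cdot H]+O(\varepsilon^2).\]
This is Fréchet differentiability of the lift, and hence Lions differentiability with derivative $\nabla\S_tg$. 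Combined with Step 1, it also recovers the interpretation of $\Omega_t^g$ as $\int\partial_\mu v_t^g\cdot h\,\d\nu_\infty^\alpha$.

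The main obstacle is Step 1: one must justify the interchange of $\nabla_y$, the $\nu_\infty^\alpha$-integrals and the time integrals near $s=t$. There the Hessian bound \eqref{eq:P-hessian} only gives the integrable singularity $(t-s)^{-1/2}$. One must also check that $R$ is locally bounded in $\mathcal L(\H)$ so that uniqueness applies. We handle both with dominated convergence, using the finite-time bounds on $\Theta$, $\Omega$ and \eqref{eq:P-gradient}--\eqref{eq:P-hessian}.
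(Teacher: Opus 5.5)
Your proposal is correct and follows essentially the same route as the paper: you reduce \eqref{eq:Omega-S-identification} to the case $g=F^x$, derive the quadratic first-order expansion from \eqref{eq:sensitivity-formula} and Lemma~\ref{lem:interaction-expansion}, and deduce Lions differentiability by a Taylor expansion of $\S_tg$. The differences are minor. For $g=F^x$, you differentiate the $\Theta$-form of \eqref{eq:construction-qty} and invoke uniqueness of locally bounded solutions of \eqref{eq:Omega-Volterra}, whereas the paper differentiates the $\Omega$-form $q_t^y=p_t^y+\int_0^t\Omega_{t-s}(p_s^y)\,\d s$ and concludes directly from the second resolvent identity. In Step~3, the restriction $H=h(Z)$ is unnecessary, since your estimate holds verbatim for every $H\in L^2$.
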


\begin{rmq}[Probabilistic interpretations]
Let $Z$ have law $\nu_\infty^\alpha$. For every $h\in\H$, set $\mu_\varepsilon:=\Law(Z+\varepsilon h(Z))$. The preceding identifications of the Lions derivatives give, for every $g\in\XXX$ and every $t\geq0$,
\begin{equation}\label{eq:Volterra-probabilistic-interpretation}
\Theta_t^g(h)=\lim_{\varepsilon\to0}\frac{u_t^g(\mu_\varepsilon)-\ps{g,\nu_\infty^\alpha}}{\varepsilon},\qquad
\Omega_t^g(h)=\lim_{\varepsilon\to0}\frac{v_t^g(\mu_\varepsilon)-\ps{g,\nu_\infty^\alpha}}{\varepsilon}.
\end{equation}
Here, we used the invariance of $\nu_\infty^\alpha$ for both dynamics, which gives $u_t^g(\nu_\infty^\alpha)=v_t^g(\nu_\infty^\alpha)=\ps{g,\nu_\infty^\alpha}$. In particular, taking $g=F^x$ componentwise and recalling that $\ps{F^x,\nu_\infty^\alpha}=\alpha(x)$, we obtain
\begin{equation}\label{eq:Volterra-interaction-interpretation}
\Theta_t(h)(x)=\lim_{\varepsilon\to0}\frac{u_t^{F^x}(\mu_\varepsilon)-\alpha(x)}{\varepsilon},\qquad
\Omega_t(h)(x)=\lim_{\varepsilon\to0}\frac{v_t^{F^x}(\mu_\varepsilon)-\alpha(x)}{\varepsilon}.
\end{equation}
\end{rmq}

\begin{proof}
Fix $t\geq0$ and $h\in\H$.

We first consider the case $g=F^x$, understood componentwise, for which we show that, for every $x\in\RR^d$,
\[\Omega_t(h)(x)=\int_{\RR^d}\nabla_yq_t^y(x)h(y)\,\nu_\infty^\alpha(\d y).\]
To do so, recall that $q_t^y(x)=\S_tF^x(y)$. By the definition \eqref{eq:S-Omega-representation}, we have
\[q_t^y=p_t^y+\int_0^t\Omega_{t-s}(p_s^y)\,\d s.\]
Differentiating this representation with respect to $y$ and using the linearity of $\Omega_{t-s}$ yields
\[\nabla_yq_t^y=\nabla_yp_t^y+\int_0^t\Omega_{t-s}(\nabla_yp_s^y)\,\d s.\]
We now integrate this equation against $h(y)\,\nu_\infty^\alpha(\d y)$ and consider separately the two terms on the right-hand side. For the first term, the definitions of $p_t^y$ and $\Theta_t$ give, for every $x\in\RR^d$,
\[\int_{\RR^d}\nabla_yp_t^y(x)h(y)\,\nu_\infty^\alpha(\d y)=\int_{\RR^d}\nabla_y\parent{\P_t^\alpha F^x}(y)h(y)\,\nu_\infty^\alpha(\d y)=\Theta_t(h)(x).\]
For the second term, Fubini's theorem and the linearity of $\Omega_{t-s}$ give, as an identity in $\H$,
\[\begin{aligned}\int_{\RR^d}\parent{\int_0^t\Omega_{t-s}(\nabla_yp_s^y)\,\d s}h(y)\,\nu_\infty^\alpha(\d y)&=\int_0^t\Omega_{t-s}\parent{\int_{\RR^d}\nabla_yp_s^y h(y)\,\nu_\infty^\alpha(\d y)}\,\d s\\&=\int_0^t\Omega_{t-s}\parent{\Theta_s(h)}\,\d s.\end{aligned}\]
Combining these identities and using \eqref{eq:Omega-Volterra}, we obtain
\[\int_{\RR^d}\nabla_yq_t^y(x)h(y)\,\nu_\infty^\alpha(\d y)=\Theta_t(h)(x)+\int_0^t\Omega_{t-s}\parent{\Theta_s(h)}(x)\,\d s=\Omega_t(h)(x).\]
Since $q_t^y(x)=\S_tF^x(y)$ and $\Omega_t^{F^x}(h)=\Omega_t(h)(x)$ componentwise, this proves the identity for the case $g = F^x$.

We now consider $g\in\XXX$. Differentiating its defining formula in \eqref{eq:S-Omega-representation}, integrating against $h(y)\,\nu_\infty^\alpha(\d y)$, and using the linearity of $\Theta_{t-s}^g$ yields
\[\int_{\RR^d}\nabla \S_tg(y)\cdot h(y)\,\nu_\infty^\alpha(\d y)=\int_{\RR^d}\nabla\P_t^\alpha g(y)\cdot h(y)\,\nu_\infty^\alpha(\d y)+\int_0^t\Theta_{t-s}^g\parent{\int_{\RR^d}\nabla_yq_s^y h(y)\,\nu_\infty^\alpha(\d y)}\,\d s.\]
Hence, the preceding identity and \eqref{eq:Omega-g-definition} give
\[\int_{\RR^d}\nabla \S_tg(y)\cdot h(y)\,\nu_\infty^\alpha(\d y)=\Theta_t^g(h)+\int_0^t\Theta_{t-s}^g\parent{\Omega_s(h)}\,\d s=\Omega_t^g(h).\]
This proves \eqref{eq:Omega-S-identification}.

We now prove \eqref{eq:S-flat-derivative}. Fix $T>0$ and $\mu\in\PPP_1(\RR^d)$, write $\mu_s:=\TTT_s(\mu)$, and set $\varepsilon:=\Wasserstein_1(\mu,\nu_\infty^\alpha)$.
For $g\in\XXX$ and $0\leq t\leq T$, we rewrite \eqref{eq:sensitivity-formula} to isolate the expected first-order term. Splitting $\mu_s=\nu_\infty^\alpha+(\mu_s-\nu_\infty^\alpha)$ and adding and subtracting the term involving $q_s^y$ gives
\[\begin{aligned}
v_t^g(\mu)-v_t^g(\nu_\infty^\alpha) & = \ps{\P_t^\alpha g,\mu-\nu_\infty^\alpha}+\int_0^t\Theta_{t-s}^g\parent{\int_{\RR^d}q_s^y\,(\mu-\nu_\infty^\alpha)(\d y)}\,\d s\\
	&\qquad+\int_0^t\Theta_{t-s}^g\parent{\Phi(\mu_s)-\alpha-\int_{\RR^d}q_s^y\,(\mu-\nu_\infty^\alpha)(\d y)}\,\d s\\
	&\qquad+\int_0^t\ps{\nabla\P_{t-s}^\alpha g\cdot\parent{\Phi(\mu_s)-\alpha},\mu_s-\nu_\infty^\alpha}\,\d s.
\end{aligned}\]
The first line is precisely the linear term we seek. Indeed, Fubini's theorem and the definition of $\S_tg$ give
\[\begin{aligned}
& \ps{\P_t^\alpha g,\mu-\nu_\infty^\alpha}+\int_0^t\Theta_{t-s}^g\parent{\int_{\RR^d}q_s^y\,(\mu-\nu_\infty^\alpha)(\d y)}\,\d s\\
	&\qquad =\int_{\RR^d}\parent{\P_t^\alpha g(y)+\int_0^t\Theta_{t-s}^g(q_s^y)\,\d s}\,(\mu-\nu_\infty^\alpha)(\d y)\\
	&\qquad =\int_{\RR^d}\S_tg(y)\,(\mu-\nu_\infty^\alpha)(\d y).\end{aligned}\]
It therefore remains to show that each of the two remaining integrals is $O_T\parent{\norme{\nabla g}{\infty}\varepsilon^2}$.

We first estimate the last integral. By \eqref{eq:W1},
\[\vabs{\ps{\nabla\P_{t-s}^\alpha g\cdot\parent{\Phi(\mu_s)-\alpha},\mu_s-\nu_\infty^\alpha}}\leq\Lip\parent{\nabla\P_{t-s}^\alpha g\cdot\parent{\Phi(\mu_s)-\alpha}}\Wasserstein_1(\mu_s,\nu_\infty^\alpha).\]
The product rule gives, for $0\leq s<t\leq T$,
\[\Lip\parent{\nabla\P_{t-s}^\alpha g\cdot\parent{\Phi(\mu_s)-\alpha}}\leq\norme{\nabla^2\P_{t-s}^\alpha g}{\infty}\norme{\Phi(\mu_s)-\alpha}{\infty}+\norme{\nabla\P_{t-s}^\alpha g}{\infty}\norme{\nabla_x\parent{\Phi(\mu_s)-\alpha}}{\infty}.\]
Using \eqref{eq:interaction-first-order-bounds} from Lemma~\ref{lem:interaction-expansion}, together with \eqref{eq:P-gradient}, \eqref{eq:P-hessian}, and the semigroup property of $\P^\alpha$, we obtain
\[\Lip\parent{\nabla\P_{t-s}^\alpha g\cdot\parent{\Phi(\mu_s)-\alpha}}\leq C_T\norme{\nabla g}{\infty}\varepsilon\parent{1+(t-s)^{-1/2}},\qquad 0\leq s<t\leq T.\]
Combining these estimates with \eqref{eq:controle-mf-inv} and integrating over $s$ gives, after increasing $C_T$,
\[\begin{aligned}\vabs{\int_0^t\ps{\nabla\P_{t-s}^\alpha g\cdot\parent{\Phi(\mu_s)-\alpha},\mu_s-\nu_\infty^\alpha}\,\d s}&\leq C_T\norme{\nabla g}{\infty}\varepsilon^2\int_0^t\parent{1+(t-s)^{-1/2}}\,\d s\\&\leq C_T\norme{\nabla g}{\infty}\varepsilon^2,\end{aligned}\]
where we used $\int_0^t(1+(t-s)^{-1/2})\,\d s=t+2\sqrt t\leq T+2\sqrt T$.

To control the other integral, we use the quadratic approximation \eqref{eq:interaction-quadratic-expansion} from Lemma~\ref{lem:interaction-expansion}. Together with the finite-time bounds on $\Theta^g$, it gives
\[\begin{aligned}
& \vabs{\int_0^t\Theta_{t-s}^g\parent{\Phi(\mu_s)-\alpha-\int_{\RR^d}q_s^y\,(\mu-\nu_\infty^\alpha)(\d y)}\,\d s}\\
&\qquad \leq C_T\norme{\nabla g}{\infty}\int_0^t\norme{\Phi(\mu_s)-\alpha-\int_{\RR^d}q_s^y\,(\mu-\nu_\infty^\alpha)(\d y)}{\infty}\,\d s\\
&\qquad \leq C_T\norme{\nabla g}{\infty}\varepsilon^2,
\end{aligned}\]
after increasing $C_T$. Combining the two remainder estimates with the initial decomposition gives
\begin{equation}\label{eq:nonlinear-first-order-expansion}
v_t^g(\mu)-v_t^g(\nu_\infty^\alpha)=\int_{\RR^d}\S_tg(y)\,(\mu-\nu_\infty^\alpha)(\d y)+O_T\parent{\norme{\nabla g}{\infty}\varepsilon^2}.
\end{equation}
Thus, $\S_tg$ represents the flat derivative at $\nu_\infty^\alpha$, up to an additive constant, proving \eqref{eq:S-flat-derivative}.

We finally identify the Lions derivative \eqref{eq:S-Lions-derivative}. Fix $g\in\XXX$ and $t>0$. Let $Z$ and $H$ be $\RR^d$-valued square-integrable random variables on the same atomless probability space, with $\Law(Z)=\nu_\infty^\alpha$, and set $\mu^H:=\Law(Z+H)$. The coupling $(Z+H,Z)$ gives
\[\Wasserstein_1(\mu^H,\nu_\infty^\alpha)\leq\EE|H|\leq\norme{H}{L^2}.\]
Since $\S_tg$ is continuously differentiable with bounded gradient, the map $H\mapsto\EE[\nabla\S_tg(Z)\cdot H]$ is a bounded linear functional on $L^2$, and a first-order Taylor expansion under the expectation, together with \eqref{eq:nonlinear-first-order-expansion}, gives, as $\norme{H}{L^2}\to0$,
\[\begin{aligned}v_t^g(\mu^H)-v_t^g(\nu_\infty^\alpha)&=\EE\parent{\S_tg(Z+H)-\S_tg(Z)}+O_{t,g}\parent{\norme{H}{L^2}^2}\\&=\EE\parent{\nabla\S_tg(Z)\cdot H}+o\parent{\norme{H}{L^2}}.\end{aligned}\]
This proves Lions differentiability, with a version of the derivative given by \eqref{eq:S-Lions-derivative}.
\end{proof}

\paragraph{Step 4. The semigroup property and the infinitesimal operator}
We now show that the constructed family is a semigroup and that its infinitesimal operator agrees with $\AAA$ on $\XXX$. We first use the nonlinear flow to prove the semigroup property. We then identify the infinitesimal operator and derive the Duhamel formula. Unlike $(\P_t^\alpha)_{t\geq0}$, the linearized semigroup $(\S_t)_{t\geq0}$ is not, in general, a Markov semigroup: although it preserves constants, it need not preserve positivity.

\begin{prop}[Linearized Fokker--Planck semigroup]\label{prop:sgfpl}
The family $(\S_t)_{t\geq0}$ constructed above satisfies the following properties.
\begin{enumerate}
	\item[(i)] It is a linear semigroup on $\XXX$:
\[\S_0g=g,\qquad \S_{t+r}g=\S_t(\S_rg),\qquad g\in\XXX,\quad t,r\geq0.\]
	\item[(ii)] For every $g\in\XXX$ and every $y\in\RR^d$,
\begin{equation}\label{eq:S-generator}
\lim_{h\to 0}\frac{\S_hg(y)-g(y)}{h}=\LLL_\alpha g(y) +\KKK g(y) =\AAA g(y).
\end{equation}
Moreover, $\S_t g$ solves the Kolmogorov equation
\begin{equation}\label{eq:S-backwar-equation}
\partial_t\S_tg=\AAA \S_tg,\qquad t\geq0,\qquad \S_0g=g.
\end{equation}
	\item[(iii)] For every $g\in\XXX$ and $t\geq0$,
\begin{equation}\label{eq:S-duhamel}
\S_tg=\P_t^\alpha g+\int_0^t\P_{t-s}^\alpha\KKK \S_sg\,\d s.
\end{equation}
\end{enumerate}
\end{prop}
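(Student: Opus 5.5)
Linearity of $\S_t$ and $\S_0g=g$ are read off directly from \eqref{eq:S-Omega-representation}: every operator there is linear in $g$, and the integrals vanish at $t=0$. For the semigroup property (i) I will use the flat-derivative characterization \eqref{eq:nonlinear-first-order-expansion} from Lemma~\ref{lem:Omega-S-identification} together with the flow property $\TTT_{t+r}=\TTT_t\circ\TTT_r$. Fix $g\in\XXX$, $t,r\le T$, and $\mu\in\PPP_1(\RR^d)$ with $\varepsilon:=\Wasserstein_1(\mu,\nu_\infty^\alpha)$. Then $v_{t+r}^g(\mu)=v_t^g(\TTT_r(\mu))$. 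Applying \eqref{eq:nonlinear-first-order-expansion} at time $t$ to the measure $\TTT_r(\mu)$ is legitimate, since $\Wasserstein_1(\TTT_r(\mu),\nu_\infty^\alpha)\le C_T\varepsilon$ by \eqref{eq:controle-mf-inv}. This gives
\[
v_{t+r}^g(\mu)-\ps{g,\nu_\infty^\alpha}=\ps{\S_tg,\TTT_r(\mu)-\nu_\infty^\alpha}+O_T(\varepsilon^2).
\]
Since $\S_tg\in\XXX$, we can apply \eqref{eq:nonlinear-first-order-expansion} again, with observable $\S_tg$ at time $r$, using $v_r^{\S_tg}(\mu)=\ps{\S_tg,\TTT_r(\mu)}$. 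This turns the right-hand side into $\ps{\S_r\S_tg,\mu-\nu_\infty^\alpha}+O_T(\varepsilon^2)$. Comparing with the expansion of $v_{t+r}^g$ at time $t+r$ yields
\[
\ps{\S_{t+r}g-\S_r\S_tg,\mu-\nu_\infty^\alpha}=O(\varepsilon^2)\qquad\text{for all }\mu .
\]
Now take $\mu=(1-\delta)\nu_\infty^\alpha+\delta\,\delta_y$. Then $\varepsilon\le\delta\int|x-y|\,\nu_\infty^\alpha(\d x)$; dividing by $\delta$ and letting $\delta\to0$ shows that $\S_{t+r}g-\S_r\S_tg$ is constant. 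Taking $\mu=(1-\delta)\nu_\infty^\alpha+\delta\,\delta_{y'}$ for another point $y'$ is not needed: since both functions are defined pointwise, it is cleaner to also check that $\int(\S_sg)\,\d\nu_\infty^\alpha=\ps{g,\nu_\infty^\alpha}$, i.e.\ that the representative is normalized the same way. I would verify this normalization from \eqref{eq:S-Omega-representation}, using invariance and the fact that $\int q_s^y\,\nu_\infty^\alpha(\d y)=0$; if it fails, I would instead normalize via $\S_t1=1$ and argue that the constant is $0$. This gives $\S_{t+r}=\S_r\S_t=\S_t\S_r$ by symmetry. The error bounds must be uniform in $\mu$, which is exactly what Lemma~\ref{lem:Omega-S-identification} provides; the point mass has finite first moment, so it is admissible.

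For (iii), I will rewrite the second formula in \eqref{eq:S-Omega-representation}. Substituting the definition of $\Theta_{t-s}^g$ gives
\[
\int_0^t\Theta_{t-s}^g(q_s^y)\,\d s=\int_0^t\!\!\int\nabla\P_{t-s}^\alpha g(x)\cdot\S_sF^x(y)\,\nu_\infty^\alpha(\d x)\,\d s .
\]
What we want is $\int_0^t\P_{t-s}^\alpha\KKK\S_sg\,\d s$, so we must show these two expressions agree. Both sides define linear-in-$g$ Volterra objects. My plan is to show that $R_t g:=\P_t^\alpha g+\int_0^t\P_{t-s}^\alpha\KKK R_sg\,\d s$, which is uniquely solvable by Picard iteration since $\KKK$ is bounded on gradients by \eqref{eq:K-properties} and $\P$ satisfies \eqref{eq:P-gradient}, has the same first-order expansion \eqref{eq:nonlinear-first-order-expansion}. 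Concretely, I would run the duality argument: apply Itô's formula to $R_{t-s}g(X_s^\mu)$ and expand $\Phi(\mu_s)-\alpha$ to first order. The $\KKK$ term arises precisely from $\int\nabla R_{t-s}g(x)\cdot\Phi(\mu_s-\nu_\infty^\alpha)(x)\,\nu_\infty^\alpha(\d x)=\ps{\KKK R_{t-s}g,\mu_s-\nu_\infty^\alpha}$, and this is linearized as before. Uniqueness of the flat derivative, up to the same normalization, then forces $R_t=\S_t$.

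Statement (ii) follows from (iii). Differentiating the Duhamel formula at $t=0$ gives $\LLL_\alpha g+\KKK g$ pointwise, using the continuity of $s\mapsto\KKK\S_sg$ and the standard pointwise generator limit for $\P^\alpha$ on $C^2$ functions with bounded derivatives. The semigroup property then gives $\partial_t\S_tg=\AAA\S_tg$ by applying the generator identity to $\S_tg\in\XXX$, with right derivatives upgraded to two-sided ones by continuity.

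The main obstacle is (iii), specifically the rigorous Itô and duality step for $R_{t-s}g$. This requires time-regularity of $R$ and handling the $(t-s)^{-1/2}$ Hessian singularity from \eqref{eq:P-hessian}. If that becomes heavy, my fallback is to verify (iii) directly from the Volterra identities: expand both sides in Neumann series in $\Theta$ and match them term by term via Fubini.
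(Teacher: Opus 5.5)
Your first step for (i) is the paper's: the flow property, repeated use of \eqref{eq:nonlinear-first-order-expansion}, and the test measures $\mu=(1-\delta)\nu_\infty^\alpha+\delta\,\delta_y$ show that $c(g):=\S_{t+r}g-\S_r(\S_tg)$ is constant in space. The gap is in showing that this constant is zero.

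The normalization you propose is false. $\int q_s^y\,\nu_\infty^\alpha(\d y)$ is not zero: at $s=0$ it equals $\int F(\cdot,y)\,\nu_\infty^\alpha(\d y)=\alpha$. In fact $\nu_\infty^\alpha$ is not invariant for the linearized evolution. Integrating the construction against $\nu_\infty^\alpha$ and using $\int p_r^y\,\nu_\infty^\alpha(\d y)=\alpha$ gives
\[
\ps{\S_ug,\nu_\infty^\alpha}=\ps{g,\nu_\infty^\alpha}+\int_0^u\Omega_r^g(\alpha)\,\d r,\qquad \Omega_r^g(\alpha)=\int_{\RR^d}\nabla\S_rg\cdot\alpha\,\d\nu_\infty^\alpha .
\]
Here $\Omega_0^g(\alpha)=\int\nabla g\cdot\alpha\,\d\nu_\infty^\alpha$ is nonzero in general; in the OU--cosine model with $g(y)=y$ it equals $\alpha$. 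Your fallback does not help either. $\S_t1=1$ only shows that the linear functional $g\mapsto c(g)$ vanishes on constants, which says nothing about its value on other $g$.

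The missing idea is that the $\nu_\infty^\alpha$-mean, although not conserved, is determined by the gradients $\nabla\S_rg$ through the formula above, and it is additive in time. You already know $\nabla\S_u(\S_tg)=\nabla\S_{t+u}g$ for all $u\geq0$, so \eqref{eq:Omega-S-identification} gives $\Omega_u^{\S_tg}=\Omega_{t+u}^g$. Hence
\[
\ps{\S_r(\S_tg),\nu_\infty^\alpha}=\ps{g,\nu_\infty^\alpha}+\int_0^t\Omega_u^g(\alpha)\,\d u+\int_t^{t+r}\Omega_u^g(\alpha)\,\d u=\ps{\S_{t+r}g,\nu_\infty^\alpha},
\]
and this is how the paper concludes $c(g)=0$.

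The same defect recurs in your plan for (iii). Equality of first-order expansions only shows that $R_tg-\S_tg$ is constant. That constant must again be removed by comparing $\nu_\infty^\alpha$-means, for instance via $\ps{R_tg,\nu_\infty^\alpha}=\ps{g,\nu_\infty^\alpha}+\int_0^t\int\alpha\cdot\nabla R_sg\,\d\nu_\infty^\alpha\,\d s$ together with $\nabla R_sg=\nabla\S_sg$. The normalization you suggest will not do it.

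Your It\^o step for $R_{t-s}g(X_s^\mu)$ also presupposes $\partial_sR_{t-s}g=-\AAA R_{t-s}g$, which is essentially (ii) for $R$. Proving (iii) before (ii) therefore makes the argument heavier and partly circular. The paper avoids this by obtaining \eqref{eq:S-generator} directly from the defining formula:
\[
h^{-1}\bigl(\S_hg(y)-g(y)\bigr)=h^{-1}\bigl(\P_h^\alpha g(y)-g(y)\bigr)+h^{-1}\int_0^h\Theta_{h-s}^g(q_s^y)\,\d s .
\]
The last average tends to $\Theta_0^g(q_0^y)=\KKK g(y)$, by continuity of $r\mapsto\Theta_r^g$ in $\H^\ast$ and of $s\mapsto q_s^y$ in $\H$. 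The Kolmogorov equation then follows from (i), and (iii) follows by differentiating $s\mapsto\P_{t-s}^\alpha\S_sg$.
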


\begin{proof}

We first prove \textup{(i)}. Fix $t,s>0$, $\mu\in \PPP_1(\RR^d)$ and $g\in\XXX$. Linearity follows directly from the construction, and $\S_0g=g$ by definition. Set $\varepsilon:=\Wasserstein_1(\mu,\nu_\infty^\alpha)$. The flow property gives $\TTT_{t+s}(\mu)=\TTT_t(\TTT_s(\mu))$. Consequently,
\[v_{t+s}^g(\mu)=v_t^g(\TTT_s(\mu)).\]
For the left-hand side, \eqref{eq:S-flat-derivative} gives
\[\begin{aligned}
v_{t+s}^g(\mu)-v_{t+s}^g(\nu_\infty^\alpha)&=\int_{\RR^d}\frac{\delta v_{t+s}^g}{\delta m}(\nu_\infty^\alpha)(x)\,(\mu-\nu_\infty^\alpha)(\d x)+o(\varepsilon)\\
	&=\int_{\RR^d}\S_{t+s}g(x)\,(\mu-\nu_\infty^\alpha)(\d x)+o(\varepsilon).
\end{aligned}\]
For the right-hand side, applying equations \eqref{eq:controle-mf} and \eqref{eq:S-flat-derivative}, we therefore obtain
\[\begin{aligned}v_t^g(\TTT_s(\mu))-v_t^g(\nu_\infty^\alpha)&=\int_{\RR^d}\frac{\delta v_t^g}{\delta m}(\nu_\infty^\alpha)(x)\,(\TTT_s(\mu)-\nu_\infty^\alpha)(\d x)+o(\varepsilon)\\&=\int_{\RR^d}\S_tg(x)\,(\TTT_s(\mu)-\nu_\infty^\alpha)(\d x)+o(\varepsilon)\\&=\EE[\S_tg(X_s^\mu)]-\EE[\S_tg(X_s^{\nu_\infty^\alpha})]+o(\varepsilon)\\&=v_s^{\S_tg}(\mu)-v_s^{\S_tg}(\nu_\infty^\alpha)+o(\varepsilon).\end{aligned}\]
Since $\S_tg\in\XXX$, we may apply the same expansion once more, now to $v_s^{\S_tg}$, which gives
\[\begin{aligned}v_s^{\S_tg}(\mu)-v_s^{\S_tg}(\nu_\infty^\alpha)&=\int_{\RR^d}\frac{\delta v_s^{\S_tg}}{\delta m}(\nu_\infty^\alpha)(x)\,(\mu-\nu_\infty^\alpha)(\d x)+o(\varepsilon)\\&=\int_{\RR^d}\S_s(\S_tg)(x)\,(\mu-\nu_\infty^\alpha)(\d x)+o(\varepsilon).\end{aligned}\]
Thus,
\[\int_{\RR^d}\parent{\S_{t+s}g(x)-\S_s(\S_tg)(x)} (\mu-\nu_\infty^\alpha)(\d x)=o(\varepsilon).\]
Taking $\mu=(1-h)\nu_\infty^\alpha+h\delta_x$ with $0 < h < 1$, for which $\varepsilon=h\Wasserstein_1(\delta_x,\nu_\infty^\alpha)$, dividing by $h$, and letting $h\to 0$ shows that
\[\S_{t+s}g(x)-\S_s(\S_tg)(x)=\ps{\S_{t+s}g-\S_s(\S_tg),\nu_\infty^\alpha}.\]

It remains to show that $\ps{\S_{t+s}g,\nu_\infty^\alpha}=\ps{\S_s(\S_tg),\nu_\infty^\alpha}$. 
The preceding argument shows that $\S_{t+r}g-\S_r(\S_tg)$ is constant in space, in particular $\nabla \S_r(\S_tg)=\nabla \S_{t+r}g$. Therefore, by \eqref{eq:Omega-S-identification}, for every $h\in\H$, $\Omega_r^{\S_tg}=\Omega_{t+r}^g$.
We know that we have the equivalent representation
\[\S_ug(y)=\P_u^\alpha g(y)+\int_0^u\Omega_r^g(p_{u-r}^y)\,\d r.\]
Integrating the preceding representation against $\nu_\infty^\alpha$ and using its invariance under $(\P_r^\alpha)_{r\geq0}$, which gives $\int_{\RR^d}p_r^y\,\nu_\infty^\alpha(\d y)=\alpha$, we obtain
\[\ps{\S_ug,\nu_\infty^\alpha}=\ps{\P_u^\alpha g,\nu_\infty^\alpha}+\int_0^u\Omega_r^g\parent{\int_{\RR^d}p_{u-r}^y\,\nu_\infty^\alpha(\d y)}\,\d r=\ps{g,\nu_\infty^\alpha}+\int_0^u\Omega_r^g(\alpha)\,\d r.\]
Applying the preceding identity first to $\S_tg$ at time $s$, then to $g$ at time $t$, and using $\Omega_r^{\S_tg}=\Omega_{t+r}^g$, we obtain
\[\begin{aligned}
\ps{\S_s(\S_tg),\nu_\infty^\alpha} &=\ps{\S_tg,\nu_\infty^\alpha}+\int_0^s\Omega_r^{\S_tg}(\alpha)\,\d r\\
	&= \parent{\ps{g,\nu_\infty^\alpha}+\int_0^t\Omega_r^g(\alpha)\,\d r}+\int_0^s\Omega_{t+r}^g(\alpha)\,\d r\\
	&=\ps{g,\nu_\infty^\alpha}+\int_0^t\Omega_r^g(\alpha)\,\d r+\int_t^{t+s}\Omega_r^g(\alpha)\,\d r\\
	&=\ps{g,\nu_\infty^\alpha}+\int_0^{t+s}\Omega_r^g(\alpha)\,\d r \;=\; \ps{\S_{t+s}g,\nu_\infty^\alpha}.
\end{aligned}\]
The constant is therefore zero. The cases where either time is zero follow from $\S_0g=g$. This proves \textup{(i)}.

We next prove \textup{(ii)}. Fix $g\in\XXX$ and $y\in\RR^d$. We have
\[\frac{\S_tg(y)-g(y)}{t}=\frac{\P_t^\alpha g(y)-g(y)}{t}+\frac1t\int_0^t\Theta_{t-s}^g(q_s^y)\,\d s.\]
Since $\LLL_\alpha$ is the infinitesimal operator associated with $(\P_t^\alpha)_{t\geq0}$,
\[\frac{\P_t^\alpha g(y)-g(y)}{t}\xra{}{t\to 0}\LLL_\alpha g(y).\]
Moreover, for any functions $f:[0,\infty)\lra\H^\ast$ and $u:[0,\infty)\lra\H$ that are continuous at zero in their respective norms, a standard averaging argument gives
\[\frac1t\int_0^t f(t-s)\parent{u(s)}\,\d s\xra{}{t\to 0}f(0)\parent{u(0)}.\]
Since $\S_0F^x=F^x$ and $\P_0^\alpha g=g$,
\[\Theta_0^g(q_0^y)=\int_{\RR^d}\S_0F^x(y)\cdot\nabla\P_0^\alpha g(x)\,\nu_\infty^\alpha(\d x)=\int_{\RR^d}F(x,y)\cdot\nabla g(x)\,\nu_\infty^\alpha(\d x)=\KKK g(y).\]
Applying the preceding observation to the continuous maps $r\mapsto\Theta_r^g\in\H^\ast$ and $s\mapsto q_s^y\in\H$, we obtain
\[\frac1t\int_0^t\Theta_{t-s}^g(q_s^y)\,\d s\xra{}{t\to 0}\Theta_0^g(q_0^y)=\KKK g(y).\]
Combining the two limits proves \eqref{eq:S-generator}.

To prove \eqref{eq:S-backwar-equation}, recall that $\S_tg\in\XXX$ for every $t\geq0$. Thus, \textup{(i)} and \eqref{eq:S-generator} give the right derivative
\[\partial_t\S_tg(y)=\lim_{h\to0}\frac{\S_{t+h}g(y)-\S_tg(y)}{h}=\lim_{h\to0}\frac{\S_h(\S_tg)(y)-\S_tg(y)}{h}=\AAA\S_tg(y).\]

We finally prove \textup{(iii)}. Fix $g\in\XXX$ and $t>0$. Consider the map $s\mapsto\P_{t-s}^\alpha \S_sg$. Its values at $s=0$ and $s=t$ are $\P_t^\alpha g$ and $\S_tg$, respectively. Hence,
\[\S_tg-\P_t^\alpha g=\int_0^t\partial_s\parent{\P_{t-s}^\alpha \S_sg}\,\d s.\]
By the product rule and \eqref{eq:S-backwar-equation}, for $0<s<t$,
\[\partial_s\parent{\P_{t-s}^\alpha \S_sg}=-\left.\parent{\partial_r\P_r^\alpha}\right|_{r=t-s}(\S_sg)+\P_{t-s}^\alpha\parent{\partial_s\S_sg}=-\P_{t-s}^\alpha\LLL_\alpha \S_sg+\P_{t-s}^\alpha\AAA \S_sg=\P_{t-s}^\alpha\KKK \S_sg.\]
Substituting this identity into the preceding integral proves \eqref{eq:S-duhamel}.
\end{proof}

We now return to the sensitivity formula \eqref{eq:sensitivity-formula}. Splitting $\mu_s=\nu_\infty^\alpha+\nu_s$ and using the linearized semigroup to resolve the linear Volterra term gives the following representation.
\begin{rmq}[Resolvent form of the sensitivity formula]\label{lem:nonlinear-variation-of-constants}
Under assumptions \ref{hyp:H1} and \ref{hyp:H2}, let $\mu_0\in\PPP_1(\RR^d)$ and define $\mu_t:=\Law(X_t^{\mu_0})$ and $\nu_t:=\mu_t-\nu_\infty^\alpha$. Then, for every $g\in\XXX$ and $t\geq0$,
\begin{equation}\label{eq:nonlinear-variation-of-constants}
\ps{g,\nu_t}=\ps{\S_tg,\nu_0}+\int_0^t\iint_{\RR^d\times\RR^d}\nabla\S_{t-s}g(x)\cdot F(x,y)\,\nu_s(\d x)\nu_s(\d y)\,\d s.
\end{equation}
The first term describes the evolution of the initial perturbation under the linearized equation, while the integral accounts for the quadratic term in the nonlinear equation.
\end{rmq}

\subsection{Regularity estimates for the linearized semigroup}

The construction already provides finite-time regularity and preservation of $\XXX$. We now establish terminal-time and long-time estimates needed for the confinement argument.

For the following terminal-time estimate, we consider test functions $g$ for which $\LLL_\alpha g$ is bounded and globally Lipschitz. This is ensured by the compact support of the gradient, since $b+\alpha$ and its first derivative are bounded on compact sets. We therefore introduce the smaller class
\begin{equation}\label{eq:D-class}
\DDD:=\left\{g\in C^\infty(\RR^d;\RR):\nabla g\in C_c^\infty(\RR^d;\RR^d)\right\}\subset\XXX.
\end{equation}
The finite family of test functions used in the confinement proof will be chosen in $\DDD$.

\begin{lem}[Terminal-time regularity]\label{lem:terminal-time}
Assume \ref{hyp:H3}. For every $g\in\DDD$, there exists a finite constant $C_g$ such that, for every $0\leq h\leq1$ and $r\geq0$,
\begin{equation}\label{eq:terminal-small}
\norme{\nabla(\S_hg-g)}{\infty}\leq C_gh,
\end{equation}
and
\begin{equation}\label{eq:terminal-increment}
\norme{\nabla \S_{r+h}g-\nabla \S_rg}{\infty}\leq C_SC_ghe^{-\lambda_Sr}.
\end{equation}
\end{lem}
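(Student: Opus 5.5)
The second estimate \eqref{eq:terminal-increment} follows from the first via the semigroup property, so the real work is \eqref{eq:terminal-small}. By Proposition~\ref{prop:sgfpl}(i), $\S_{r+h}g-\S_rg=\S_r(\S_hg-g)$. Since $\S_hg-g\in\XXX$, hypothesis \ref{hyp:H3} applied to this function gives
\[\norme{\nabla \S_{r+h}g-\nabla\S_rg}{\infty}\leq C_Se^{-\lambda_Sr}\norme{\nabla(\S_hg-g)}{\infty}\leq C_SC_ghe^{-\lambda_Sr},\]
once \eqref{eq:terminal-small} is known.

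For \eqref{eq:terminal-small} I would use the Duhamel formula \eqref{eq:S-duhamel}, which gives
\[\S_hg-g=(\P_h^\alpha g-g)+\int_0^h\P_{h-s}^\alpha\KKK\S_sg\,\d s.\]

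For the integral term, \eqref{eq:P-gradient} gives $\norme{\nabla\P_{h-s}^\alpha\KKK\S_sg}{\infty}\leq C_P\norme{\nabla\KKK\S_sg}{\infty}$. By \eqref{eq:K-properties} this is at most $C_PL_F\norme{\nabla\S_sg}{\infty}\leq C_PL_FC_S\norme{\nabla g}{\infty}$, using \ref{hyp:H3}. Differentiating under the integral is justified exactly as in Step~3, since the integrand has bounded gradient uniformly in $s$. Integrating over $s\in[0,h]$ bounds the contribution of this term by $Ch\norme{\nabla g}{\infty}$.

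For the free term, I would write $\P_h^\alpha g-g=\int_0^h\P_s^\alpha\LLL_\alpha g\,\d s$. This holds because $g\in\DDD$: the gradient and Hessian of $g$ have compact support, so $\LLL_\alpha g=\frac12\Delta g+(b+\alpha)\cdot\nabla g$ is smooth with compact support. In particular it is bounded and globally Lipschitz, and Itô's formula applied to $g(Y_s^\alpha)$ is valid. Then \eqref{eq:P-gradient} yields
\[\norme{\nabla(\P_h^\alpha g-g)}{\infty}\leq C_Ph\norme{\nabla\LLL_\alpha g}{\infty}.\]
The quantity $\norme{\nabla\LLL_\alpha g}{\infty}$ is finite since $b$ and $\alpha$ are $C^1$ and $\nabla g$ is compactly supported. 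This gives \eqref{eq:terminal-small} with
\[C_g:=C_P\norme{\nabla\LLL_\alpha g}{\infty}+C_PL_FC_S\norme{\nabla g}{\infty}.\]

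The main point requiring care is interchanging $\nabla$ with the time integrals in both representations. I would justify it by dominated convergence, using the uniform gradient bounds and the continuity of $s\mapsto\nabla\P_s^\alpha f$ for $f$ with bounded Lipschitz gradient. Alternatively, one can argue on Lipschitz constants directly: $\Lip\bigl(\int_0^h u_s\,\d s\bigr)\leq\int_0^h\Lip(u_s)\,\d s$, which avoids differentiation under the integral altogether.
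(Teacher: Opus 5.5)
Your proof is correct and follows the paper's argument essentially verbatim. It uses the same Duhamel splitting \eqref{eq:S-duhamel}, the representation $\P_h^\alpha g-g=\int_0^h\P_u^\alpha\LLL_\alpha g\,\d u$ with \eqref{eq:P-gradient}, the bound on the integral term via \eqref{eq:K-properties} and \ref{hyp:H3}, the same constant $C_g$, and the increment estimate from $\S_{r+h}g-\S_rg=\S_r(\S_hg-g)$. One cosmetic correction: $\LLL_\alpha g$ is only $C^2$ with compact support, not smooth, since $b$ is merely $C^2$, but that is all the argument uses.
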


\begin{proof}
We estimate separately the two terms on the right-hand side of the following equation given by Duhamel's formula \eqref{eq:S-duhamel},
\[\S_hg-g=(\P_h^\alpha g-g)+\int_0^h\P_{h-s}^\alpha\KKK \S_sg\,\d s.\]
For the first one, since $\partial_u\P_u^\alpha g=\P_u^\alpha\LLL_\alpha g$, we
have $\P_h^\alpha g-g=\int_0^h\P_u^\alpha\LLL_\alpha g\,\d u$. The function $\LLL_\alpha g$ is globally Lipschitz because
\[\nabla\LLL_\alpha g=\frac12\nabla\Delta g+\nabla(b+\alpha)^\top\nabla g+\nabla^2g\,(b+\alpha),\]
and every term on the right-hand side is bounded for $g\in\DDD$. 
Using \eqref{eq:P-gradient}, we obtain
\[\norme{\nabla(\P_h^\alpha g-g)}{\infty}\leq C_P\int_0^he^{-\lambda_Pu}\norme{\nabla\LLL_\alpha g}{\infty}\,\d u\leq C_Ph\norme{\nabla\LLL_\alpha g}{\infty}.\]
For the second term, \eqref{eq:P-gradient}, \eqref{eq:K-properties}, and \ref{hyp:H3} give
\[\norme{\int_0^h\nabla\P_{h-s}^\alpha\KKK\S_sg\,\d s}{\infty} \leq C_P \norme{\nabla_y F}{\infty} \int_0^he^{-\lambda_P(h-s)}\norme{\nabla\S_sg}{\infty}\,\d s \leq C_PC_SL_Fh\norme{\nabla g}{\infty}.\]
Combining these estimates proves \eqref{eq:terminal-small}, with $C_g:=C_P\parent{\norme{\nabla\LLL_\alpha g}{\infty}+C_SL_F\norme{\nabla g}{\infty}}$.

Finally, the semigroup property gives $\S_{r+h}g-\S_rg=\S_r(\S_hg-g)$. Since $\S_hg-g\in\XXX$, we may apply \ref{hyp:H3} to this function and then use \eqref{eq:terminal-small}, obtaining
\[\norme{\nabla \S_{r+h}g-\nabla \S_rg}{\infty}\leq C_Se^{-\lambda_Sr}\norme{\nabla(\S_hg-g)}{\infty}\leq C_SC_ghe^{-\lambda_Sr},\]
which proves \eqref{eq:terminal-increment}.
\end{proof}

\begin{lem}[Hessian regularization]\label{lem:hessian}
Assume that \ref{hyp:H3} holds. For every $\lambda_H\in(0,\lambda_S)$, there exists a constant $C_H=C_H(\lambda_H)>0$ such that, for every $g\in\XXX$ and every $t>0$,
\begin{equation}\label{eq:hessian}
\norme{\nabla^2\S_tg}{\infty}\leq C_Ht^{-1/2}e^{-\lambda_Ht}\norme{\nabla g}{\infty}.
\end{equation}
\end{lem}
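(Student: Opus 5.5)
The plan is to combine a short-time Hessian bound, obtained from the Duhamel formula \eqref{eq:S-duhamel}, with the long-time gradient decay of \ref{hyp:H3}, using the semigroup property to move between the two regimes.

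\emph{Short times, $0<t\leq1$.} Differentiate \eqref{eq:S-duhamel} twice in space:
\[\nabla^2\S_tg=\nabla^2\P_t^\alpha g+\int_0^t\nabla^2\P_{t-s}^\alpha\KKK\S_sg\,\d s.\]
The first term is bounded by $Ct^{-1/2}\norme{\nabla g}{\infty}$ by \eqref{eq:P-hessian}. For the integrand, recall that $\KKK\S_sg\in\XXX$ and, by \eqref{eq:K-properties}, $\norme{\nabla\KKK\S_sg}{\infty}\leq L_F\norme{\nabla\S_sg}{\infty}$. Then \eqref{eq:P-hessian} and \ref{hyp:H3} give the bound $C(t-s)^{-1/2}L_FC_S\norme{\nabla g}{\infty}$. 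Integrating over $s\in[0,t]$ yields $2CL_FC_S\sqrt t\,\norme{\nabla g}{\infty}$. Hence, for $0<t\leq1$,
\[\norme{\nabla^2\S_tg}{\infty}\leq C_1t^{-1/2}\norme{\nabla g}{\infty},\qquad C_1:=C(1+2L_FC_S).\]
This is the desired estimate up to the harmless factor $e^{\lambda_H}$. Differentiation under the integral is justified exactly as in the construction in Step~3, since the singularity $(t-s)^{-1/2}$ is integrable.

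\emph{Long times, $t>1$.} Write $\S_tg=\S_{1}(\S_{t-1}g)$ using Proposition~\ref{prop:sgfpl}(i), which is legitimate because $\S_{t-1}g\in\XXX$. The short-time bound at time $1$ and \ref{hyp:H3} then give
\[\norme{\nabla^2\S_tg}{\infty}\leq C_1\norme{\nabla\S_{t-1}g}{\infty}\leq C_1C_Se^{\lambda_S}e^{-\lambda_St}\norme{\nabla g}{\infty}.\]
For $t\geq1$ we have $t^{1/2}e^{-(\lambda_S-\lambda_H)t}\leq\sup_{u\geq0}u^{1/2}e^{-(\lambda_S-\lambda_H)u}<\infty$. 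Therefore $e^{-\lambda_St}\leq C'\,t^{-1/2}e^{-\lambda_Ht}$, with $C'$ depending on $\lambda_S-\lambda_H$. This is why the estimate is stated only for $\lambda_H<\lambda_S$; a cruder argument would also lose a little of the rate here.

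Combining the two regimes gives \eqref{eq:hessian} with $C_H:=\max\{C_1e^{\lambda_H},\,C_1C_Se^{\lambda_S}C'\}$. I do not expect a serious obstacle. The only delicate point is the short-time step: one must bound the Duhamel integrand using only $\norme{\nabla\KKK\S_sg}{\infty}$, and not a sup-norm of $\KKK\S_sg$, since $\KKK\S_sg$ has linear growth. This is exactly what the gradient form \eqref{eq:P-hessian} of the Hessian estimate allows.
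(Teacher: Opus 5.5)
Your proposal is correct and follows the paper's proof essentially step by step. For $0<t\leq1$, both arguments differentiate the Duhamel formula twice and use \eqref{eq:P-hessian}, \eqref{eq:K-properties} and \ref{hyp:H3}. For $t\geq1$, both write $\S_tg=\S_1(\S_{t-1}g)$ and combine the short-time bound with \ref{hyp:H3} and $\sup_{t\geq1}\sqrt t\,e^{-(\lambda_S-\lambda_H)t}<\infty$.
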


\begin{proof}
Fix $g\in\XXX$ and $\lambda_H\in(0,\lambda_S)$. We first establish the estimate for $0<t\leq1$. Since $\S_sg\in\XXX$ and $\KKK\S_sg\in\XXX$, we may differentiate \eqref{eq:S-duhamel} twice in space and apply \eqref{eq:P-hessian}, which gives
\[\begin{aligned}\norme{\nabla^2\S_tg}{\infty}&\leq\frac{C}{\sqrt t}\norme{\nabla g}{\infty}+C\int_0^t\frac{\norme{\nabla\KKK \S_sg}{\infty}}{\sqrt{t-s}}\,\d s\\&\leq\frac{C}{\sqrt t}\norme{\nabla g}{\infty}+CC_S\norme{\nabla_yF}{\infty}\norme{\nabla g}{\infty}\int_0^t(t-s)^{-1/2}e^{-\lambda_Ss}\,\d s\\&\leq\frac{C}{\sqrt t}\norme{\nabla g}{\infty}.\end{aligned}\]
Here, the second inequality follows from \eqref{eq:K-properties} and \ref{hyp:H3}, while the last one uses 
\[ \int_0^t(t-s)^{-1/2}e^{-\lambda_Ss}\,\d s\leq2\sqrt t. \]
Since $e^{-\lambda_H} \leq e^{-\lambda_Ht}$ for $0<t\leq1$, the preceding estimate implies \eqref{eq:hessian} on $(0,1]$ after increasing $C_H$.
We now consider $t\geq1$. By the semigroup property, $\S_tg=\S_1(\S_{t-1}g)$. Since $\S_{t-1}g\in\XXX$, applying the preceding short-time estimate at time $1$ to this function and then using \ref{hyp:H3} gives
\[\norme{\nabla^2\S_tg}{\infty}\leq C\norme{\nabla \S_{t-1}g}{\infty}\leq CC_Se^{-\lambda_S(t-1)}\norme{\nabla g}{\infty}.\]
Since $\lambda_H<\lambda_S$ and $\sup_{t\geq1}\sqrt t\,e^{-(\lambda_S-\lambda_H)t}<\infty$, the preceding estimate implies \eqref{eq:hessian} on $[1,\infty)$ after increasing $C_H$ once more.
Combining the short- and long-time estimates completes the proof.
\end{proof}

\begin{prop}[Equivalence between nonlinear and linearized stability]\label{prop:stability-equivalence}
Under assumptions \ref{hyp:H1} and \ref{hyp:H2}, the following assertions are equivalent.
\begin{enumerate}
\item[$(i)$] The invariant measure $\nu_\infty^\alpha$ is locally stable in $\Wasserstein_1$, in the sense of \eqref{eq:non-linear-stability}.
\item[$(ii)$] The linearized semigroup $(\S_t)_{t\geq0}$ satisfies the exponential stability assumption \ref{hyp:H3}.
\end{enumerate}
\end{prop}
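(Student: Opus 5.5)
For the implication $(ii)\Rightarrow(i)$, I will work directly with the resolvent form of the sensitivity formula, Remark~\ref{lem:nonlinear-variation-of-constants}. Fix $\mu_0$ with $\varepsilon_0:=\Wasserstein_1(\mu_0,\nu_\infty^\alpha)$ small, and write $\nu_t:=\TTT_t(\mu_0)-\nu_\infty^\alpha$ and $D(t):=\norme{\nu_t}{\ast}$.

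For $g\in\XXX$ with $g(0)=0$ and $\norme{\nabla g}{\infty}\leq1$, \ref{hyp:H3} gives a bound on the linear term:
\[|\ps{\S_tg,\nu_0}|\leq C_Se^{-\lambda_St}\varepsilon_0.\]
For the quadratic term, set $\psi_s(x):=\int\nabla\S_{t-s}g(x)\cdot F(x,y)\,\nu_s(\d y)$. Applying \eqref{eq:W1} in $y$ gives $\norme{\psi_s}{\infty}\leq C\norme{\nabla\S_{t-s}g}{\infty}D(s)$. Using in addition the Hessian bound of Lemma~\ref{lem:hessian} together with $\norme{\nabla_xF}{\infty}$, I obtain
\[\Lip(\psi_s)\leq C\bigl(e^{-\lambda_S(t-s)}+(t-s)^{-1/2}e^{-\lambda_H(t-s)}\bigr)D(s).\]
Pairing $\psi_s$ with $\nu_s$ then yields the integral inequality
\[D(t)\leq C_Se^{-\lambda_St}\varepsilon_0+C\int_0^t\bigl(1+(t-s)^{-1/2}\bigr)e^{-\lambda_H(t-s)}D(s)^2\,\d s.\]
This is supremum-compatible because restricting to $\XXX$ does not change $\norme{\cdot}{\ast}$. $D$ is continuous by \eqref{eq:controle-mf}.

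A bootstrap on $\widetilde D(t):=e^{\lambda t}D(t)$ with $\lambda<\lambda_H$ closes the argument. As long as $\widetilde D\leq 2C_S\varepsilon_0$, the integral term is bounded by
\[C'\varepsilon_0^2e^{-\lambda t}\int_0^t\bigl(1+(t-s)^{-1/2}\bigr)e^{-(\lambda_H-\lambda)(t-s)}e^{-\lambda s}\,\d s\leq C''\varepsilon_0^2e^{-\lambda t}.\]
For $\varepsilon_0$ small this is less than $C_S\varepsilon_0e^{-\lambda t}$, so the bound $2C_S\varepsilon_0$ cannot be attained, and continuity propagates it for all time. This gives \eqref{eq:non-linear-stability} with constant $2C_S$ and rate $\lambda$.

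The implication $(i)\Rightarrow(ii)$ is the harder direction, and I expect it to be the main obstacle. The idea is to read off linear decay from nonlinear decay through the first-order expansion \eqref{eq:nonlinear-first-order-expansion}, whose remainder constant $C_T$ is only controlled on finite horizons.

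\textbf{Step 1: decay on a fixed horizon.} Fix $g\in\XXX$ with $\norme{\nabla g}{\infty}\leq1$ and $y,y'\in\RR^d$. Take $\mu=\nu_\infty^\alpha+h(\delta_y-\delta_{y'})$ for small $h>0$; this is a probability measure once $h\leq1$ is paired with the mixture $(1-h)\nu_\infty^\alpha+h\delta_y$, and I then difference over $y,y'$. Its distance to $\nu_\infty^\alpha$ is $O(h(1+|y|+|y'|))$. Since
\[|v_t^g(\mu)-v_t^g(\nu_\infty^\alpha)|\leq\Wasserstein_1(\TTT_t\mu,\nu_\infty^\alpha),\]
condition $(i)$ gives
\[h|\S_tg(y)-\S_tg(y')|\leq 2Ce^{-\lambda t}\cdot h\,\Wasserstein_1(\delta_y,\delta_{y'})+C_Th^2.\]
The mixture argument needs some care to obtain exactly $|y-y'|$ rather than distances to $\nu_\infty^\alpha$. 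The cleaner route is to take $\mu_0$ to be the pushforward $\mu^{\varepsilon}=\Law(Z+\varepsilon H)$, using the Lions derivative \eqref{eq:S-Lions-derivative}. Dividing by $h$ (or $\varepsilon$) and letting it tend to $0$ gives $\Lip(\S_tg)\leq C''e^{-\lambda t}$ for every $t$, with a constant independent of $t$. The uniformity is preserved because for each fixed $t$ the quadratic error disappears in the limit.

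\textbf{Step 2: the correct rate.} If the Lipschitz-difference step only produces averaged Lipschitz control, such as $\Omega_t^g$-norms in $\H^\ast$, I will instead fix $t_0$ with $C''e^{-\lambda t_0}\leq1/2$. From this I get $\norme{\nabla\S_{t_0}g}{\infty}\leq\frac12\norme{\nabla g}{\infty}$. Iterating with the semigroup property of Proposition~\ref{prop:sgfpl}, combined with the finite-time bound $\norme{\nabla\S_tg}{\infty}\leq C_{t_0}\norme{\nabla g}{\infty}$ on $[0,t_0]$, then gives \ref{hyp:H3}.
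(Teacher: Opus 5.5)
Your implication (ii)$\Rightarrow$(i) is correct, and it takes a different route from the paper. The paper does not argue directly. It combines \eqref{eq:Omega-S-identification} with \ref{hyp:H3} applied to $F^x$ to get exponential decay of $\Omega_t$ in $\mathcal L(\H)$, and then imports \cite[Theorem~2.5 and Proposition~2.6]{Cor25}. Your bootstrap on \eqref{eq:nonlinear-variation-of-constants} uses the bilinear dual-norm bound and Lemma~\ref{lem:hessian}. It is the deterministic analogue, with an exponential weight, of the continuity argument of Section~\ref{sec:proof}. It is self-contained within the paper rather than importing Cormier's nonlinear stability theorem, and it gives explicit constants: $2C_S$, and any rate $\lambda<\lambda_S$ through $\lambda_H$. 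The price is that it relies on the resolvent formula, which the paper only states; it does follow from the It\^o computation of Proposition~\ref{prop:mild} after taking expectations. Two small corrections are needed. First, a Lipschitz bound in $x$ for $\int F(x,y)\,\nu_s(\d y)$ carrying the factor $D(s)$ requires $\norme{\nabla_x\nabla_yF}{\infty}$, not $\norme{\nabla_xF}{\infty}$. Second, \eqref{eq:controle-mf} gives continuity of $D$ only when combined with the flow property and continuity of $t\mapsto\TTT_t(\mu_0)$ at $t=0$ (e.g.\ $\EE|X_t-X_0|\to0$).

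There is, however, a gap in (i)$\Rightarrow$(ii), at the step that carries the content: passing from the nonlinear estimate to a bound on $\norme{\nabla\S_tg}{\infty}$. The mixture version fails as written. \eqref{eq:non-linear-stability} only controls $\Wasserstein_1(\TTT_t\mu,\nu_\infty^\alpha)$, so differencing the two mixtures bounds $\vabs{\S_tg(y)-\S_tg(y')}$ by $Ce^{-\lambda t}\parent{\Wasserstein_1(\delta_y,\nu_\infty^\alpha)+\Wasserstein_1(\delta_{y'},\nu_\infty^\alpha)}$, not by a multiple of $|y-y'|$. Your displayed inequality would need a two-point contraction that (i) does not provide. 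The pushforward version, which is the paper's, gives for every $h\in\H$
\[\vabs{\int_{\RR^d}\nabla\S_tg\cdot h\,\d\nu_\infty^\alpha}\leq Ce^{-\lambda t}\norme{\nabla g}{\infty}\int_{\RR^d}|h|\,\d\nu_\infty^\alpha,\]
but you never explain how this becomes a sup-norm bound. Your fallback is circular: a bound on $\Omega_t^g$ in $\H^\ast$, i.e.\ on $\nabla\S_tg$ in $L^2(\nu_\infty^\alpha)$, does not give $\norme{\nabla\S_{t_0}g}{\infty}\leq\frac12\norme{\nabla g}{\infty}$, which is exactly what the iteration presupposes.

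The missing observation is that the right-hand side involves the $L^1(\nu_\infty^\alpha)$ norm of $h$, because $\Wasserstein_1(\mu_\varepsilon,\nu_\infty^\alpha)\leq\varepsilon\int|h|\,\d\nu_\infty^\alpha$. By $L^1$--$L^\infty$ duality, $|\nabla\S_tg|\leq Ce^{-\lambda t}\norme{\nabla g}{\infty}$ holds $\nu_\infty^\alpha$-almost everywhere. Continuity of $\nabla\S_tg$ and the full support of $\nu_\infty^\alpha$ then extend it to all of $\RR^d$. This is the paper's argument, giving $C_S=C$ and $\lambda_S=\lambda$.

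Alternatively, choose $h=\nabla\S_sg/|\nabla\S_sg|$ (set to $0$ where $\nabla\S_sg=0$). This gives $\int|\nabla\S_sg|\,\d\nu_\infty^\alpha\leq Ce^{-\lambda s}\norme{\nabla g}{\infty}$. Since $|\nabla\KKK f(y)|\leq\norme{\nabla_yF}{\infty}\int|\nabla f|\,\d\nu_\infty^\alpha$, inserting this into \eqref{eq:S-duhamel} together with \eqref{eq:P-gradient} yields \ref{hyp:H3} for every $\lambda_S<\min\{\lambda_P,\lambda\}$. The paper does this, in a special case, in Section~\ref{sec:double-well}. Either way no $t_0$-iteration is needed: as you correctly note, the $O_T(\varepsilon^2)$ remainder vanishes in the limit $\varepsilon\to0$ at each fixed $t$.
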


\begin{proof}
We first prove \textup{(i)} $\;\Lra\;$ \textup{(ii)}. Let $C\geq1$, $\lambda>0$, and $\varepsilon_0>0$ be the constants in \eqref{eq:non-linear-stability}. Fix $g\in\XXX$, $t>0$, and $h\in\H$. Let $Z$ have law $\nu_\infty^\alpha$ and set $\mu_\varepsilon:=\Law(Z+\varepsilon h(Z))$. The coupling given by these two random variables yields
\[\Wasserstein_1(\mu_\varepsilon,\nu_\infty^\alpha)\leq\varepsilon\nu_\infty^\alpha(|h|),\qquad\varepsilon>0.\]
For $\varepsilon$ sufficiently small, $\varepsilon\nu_\infty^\alpha(|h|)<\varepsilon_0$. By the definition of $v_t^g$ and the invariance of $\nu_\infty^\alpha$, the Kantorovich formula gives
\[\vabs{v_t^g(\mu_\varepsilon)-v_t^g(\nu_\infty^\alpha)}=\vabs{\ps{g,\Law(X_t^{\mu_\varepsilon})-\nu_\infty^\alpha}}\leq\norme{\nabla g}{\infty}\Wasserstein_1\parent{\Law(X_t^{\mu_\varepsilon}),\nu_\infty^\alpha}.\]
Applying \eqref{eq:non-linear-stability} and using the preceding coupling bound therefore yields
\[\vabs{\frac{v_t^g(\mu_\varepsilon)-v_t^g(\nu_\infty^\alpha)}{\varepsilon}}\leq Ce^{-\lambda t}\norme{\nabla g}{\infty}\frac{\Wasserstein_1(\mu_\varepsilon,\nu_\infty^\alpha)}{\varepsilon}\leq Ce^{-\lambda t}\norme{\nabla g}{\infty}\nu_\infty^\alpha(|h|).\]
By \eqref{eq:S-Lions-derivative} and \eqref{eq:Omega-S-identification}, the first variation of $v_t^g$ is represented by $\Omega_t^g$:
\[\vabs{\Omega_t^g(h)} = %
\lim_{\varepsilon\to 0} \vabs{\frac{v_t^g(\mu_\varepsilon)-v_t^g(\nu_\infty^\alpha)}{\varepsilon}} \leq Ce^{-\lambda t}\norme{\nabla g}{\infty}\nu_\infty^\alpha(|h|).\]
In particular,
\[\vabs{\int_{\RR^d}\nabla \S_tg(y)\cdot h(y)\,\nu_\infty^\alpha(\d y)}\leq Ce^{-\lambda t}\norme{\nabla g}{\infty} \int_{\RR^d} |h(y)| \nu_\infty^\alpha(\d y).\]
Since this inequality holds for every $h\in\H$, and in particular for every bounded measurable vector field, it follows that
\[|\nabla \S_tg(y)|\leq Ce^{-\lambda t}\norme{\nabla g}{\infty},\qquad \nu_\infty^\alpha\text{-almost everywhere}.\]
Since $\nabla \S_tg$ is continuous and $\nu_\infty^\alpha$ has full support, this bound holds everywhere on $\RR^d$. Taking the supremum over $y$ proves \ref{hyp:H3}, with $C_S=C$ and $\lambda_S=\lambda$.

\medskip
We next prove \textup{(ii)} $\;\Lra\;$ \textup{(i)}, using \cite[Theorem~2.5]{Cor25}, whose regularity and confinement assumptions follow from \ref{hyp:H1} and \ref{hyp:H2}. To do so, it suffices to establish the exponential decay of $\Omega_t$ in $\LLL(\H)$, see \cite[Proposition~2.6]{Cor25}. Let $h\in\H$ be such that $\norme{h}{2}\leq1$. Applying \eqref{eq:Omega-S-identification}, the Cauchy--Schwarz inequality, and \ref{hyp:H3} gives, for every $t\geq0$ and $x\in\RR^d$,
\[\vabs{\Omega_t(h)(x)}=\vabs{\int_{\RR^d}\nabla_y\S_tF^x(y)h(y)\,\nu_\infty^\alpha(\d y)}\leq\norme{\nabla_y\S_tF^x}{\infty}\norme{h}{2}\leq dC_Se^{-\lambda_St}\norme{\nabla_yF}{\infty},\]
where the last inequality follows from applying the gradient estimate in \ref{hyp:H3} componentwise to $F^x$. Squaring and integrating against the probability measure $\nu_\infty^\alpha$, we obtain $\norme{\Omega_t(h)}{2}\leq dC_Se^{-\lambda_St}\norme{\nabla_yF}{\infty}$. Consequently,
\[\sup_{t\geq0}\sup_{\norme{h}{2}\leq1}e^{\lambda_St}\norme{\Omega_t(h)}{2}\leq dC_S\norme{\nabla_yF}{\infty}<\infty.\]
The cited results therefore give \eqref{eq:non-linear-stability}, completing the proof.
\end{proof}

\section{Proof of Theorem~\ref{thm:main}}\label{sec:proof}

Throughout the proof, we use test functions in $\XXX$. This does not change the dual norm: convolution approximation preserves the Lipschitz constant and allows us to write, for every finite signed measure $\eta$ of zero total mass and finite first moment,
\begin{equation}\label{eq:dualnorm-X}
\norme{\eta}{\ast}=\sup_{\substack{g\in\XXX,\ g(0)=0\\\norme{\nabla g}{\infty}\leq1}}\vabs{\ps{g,\eta}}.
\end{equation}
We shall also apply the defining formula \eqref{eq:dualnorm} to auxiliary $C^1$ functions with bounded gradient, which need not belong to $\XXX$. For every such function $f$,
\begin{equation}\label{eq:dual-pairing-Lip}
\vabs{\ps{f,\eta}}\leq\norme{\nabla f}{\infty}\norme{\eta}{\ast}.
\end{equation}
No normalization is required in this inequality, since $\eta$ has zero total mass.

\medskip
Fix $T>0$. In order to prove \eqref{eq:main}, we have to control
\begin{equation}\label{eq:distance-dual}
\sup_{0\leq t\leq T}\norme{\mu_t^N-\nu_\infty^\alpha}{\ast}
=\sup_{0\leq t\leq T}\sup_{\substack{g\in\XXX,\ g(0)=0\\\norme{\nabla g}{\infty}\leq1}}\vabs{\ps{g,\nu_t^N}}
\end{equation}
The problem would be simpler if we only had to consider the measures on a compact set. Indeed, the class of normalized $1$-Lipschitz functions can then be approximated by a finite family of smooth functions. We therefore consider the ball $B_K:=\{x\in\RR^d:|x|\leq K\}$.

We first look at what happens in the tails, that is, at the part of the pairing in \eqref{eq:distance-dual} coming from $B_K^c$. The first observation is that this part can be controlled by the second moments. Indeed, for every function appearing in \eqref{eq:distance-dual}, the conditions $g(0)=0$ and $\norme{\nabla g}{\infty}\leq1$ imply that $|g(x)|\leq|x|$. Moreover, since $\nu_t^N=\mu_t^N-\nu_\infty^\alpha$, its total variation measure satisfies $|\nu_t^N|\leq\mu_t^N+\nu_\infty^\alpha$. Therefore, for every $K>0$,
\begin{equation}\label{eq:tail-second-moment}
\begin{aligned}
\int_{B_K^c}|g(x)|\,|\nu_t^N|(\d x)%
	&\leq\int_{B_K^c}|x|\,\parent{\mu_t^N+\nu_\infty^\alpha}(\d x)\\
	&\leq\frac1K\int_{B_K^c}|x|^2\,\parent{\mu_t^N+\nu_\infty^\alpha}(\d x)
	\leq\frac1K\parent{V_t^N+M_\infty},
\end{aligned}
\end{equation}
where $M_\infty$ is defined in \eqref{eq:Minfty}, while
\begin{equation} \label{eq:second-moment-tempo}
V_t^N:=\frac1N\sum_{i=1}^N|X_t^{i,N}|^2.
\end{equation}
Thus, the contribution outside $B_K$ can be controlled as soon as the empirical second moment $V_t^N$ remains bounded. 
This leads us naturally to consider, for a constant $M>0$ to be fixed below, the event
\begin{equation}\label{eq:Omega}
\EEE_T:=\left\{\sup_{0\leq t\leq T}V_t^N\leq2M\right\}.
\end{equation}
On this event, the estimate \eqref{eq:tail-second-moment} will allow us to make the contribution of $B_K^c$ sufficiently small by choosing $K$ appropriately. The choice of $K$ will be made in Step~2.

This gives the following natural decomposition of the proof:
\begin{itemize}
	\item[(1)] We first prove that $\EEE_T^c$, on which the empirical second moment becomes large, has exponentially small probability.
	\item[(2)] We then work on $\EEE_T$ and control the contribution coming from $B_K^c$. The estimate \eqref{eq:tail-second-moment} allows us to make this contribution sufficiently small by choosing $K$ sufficiently large.
	\item[(3)] Finally, still on $\EEE_T$, we restrict attention to $B_K$. The compactness of $B_K$ allows us to approximate the class of normalized $1$-Lipschitz test functions appearing in \eqref{eq:distance-dual} by a finite family of smooth functions. We then apply the mild equation to this finite family and estimate separately the propagated initial fluctuation, the nonlinear quadratic term, and the stochastic convolution.
\end{itemize}

\subsection{The empirical second moment remains bounded}

We start by proving that the event $\EEE_T^c$ has probability exponentially small in $N$, up to the factor $1+T$. Consequently, the empirical second moment remains bounded with high probability up to time horizons $T=e^{\gamma N}$ whenever $\gamma$ is smaller than the exponential rate obtained below.

\begin{lem}[Uniform moment confinement]\label{lem:moment}
There exist $M\geq M_{\rm init}$ and $C,c>0$ such that, for every $N\geq1$ and $T\geq0$,
\begin{equation}\label{eq:moment}
\PP\parent{\EEE_T^c}\leq C(1+T)e^{-cN}.
\end{equation}
\end{lem}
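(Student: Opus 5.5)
We apply Itô's formula to the empirical second moment $V_t^N$ and exploit the Lyapunov structure \eqref{eq:frozen-Lyapunov}. Since $F$ is bounded, the interaction drift $\frac1N\sum_jF(X^{i,N},X^{j,N})$ is bounded uniformly by $\norme{F}{\infty}$. Dissipativity of $b$ at large distances, together with Young's inequality, therefore gives constants $\kappa,C_0>0$, independent of $N$, such that $2x\cdot(b(x)+\Phi(\mu)(x))+d\leq C_0-\kappa|x|^2$ for every probability measure $\mu$. Summing over particles yields
\[\d V_t^N\leq(C_0-\kappa V_t^N)\,\d t+\d M_t^N,\qquad M_t^N:=\frac2N\sum_{i=1}^N\int_0^tX_s^{i,N}\cdot\d B_s^i,\]
with quadratic variation $\d\ps{M^N}_t=\frac{4}{N}V_t^N\,\d t$. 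The key point is the factor $1/N$ in the bracket: the martingale noise is of order $N^{-1/2}$ relative to $V$, which is the source of the exponential large-deviation bound. By variation of constants,
\[V_t^N\leq e^{-\kappa t}V_0^N+\frac{C_0}{\kappa}+\int_0^te^{-\kappa(t-s)}\,\d M_s^N.\]
We choose $M:=\max(M_{\rm init},C_0/\kappa)$, so that $V_t^N\leq 2M$ unless the stochastic convolution $Z_t:=\int_0^te^{-\kappa(t-s)}\d M_s^N$ exceeds $M-C_0/\kappa$. To keep a positive margin, we take instead $M:=\max(M_{\rm init},2C_0/\kappa)$, which leaves a margin of at least $M/2$ for $Z_t$.

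To control $\sup_{t\leq T}Z_t$, we localize at the stopping time $\tau:=\inf\{t:V_t^N\geq 2M\}$. On $[0,\tau]$ the bracket density of $M^N$ is bounded by $8M/N$, so the stopped stochastic convolution has a sub-Gaussian tail with variance proxy $8M/(2\kappa N)$ at each fixed time. Moreover, $\{\tau\leq T\}$ forces $Z_{\tau}\geq M/2$, with bracket controlled up to $\tau$. We then use the exponential maximal estimate for stochastic convolutions announced in the introduction and proved in Appendix~\ref{app:convolution}. It bounds $\PP(\sup_{t\leq T\wedge\tau}Z_t\geq r)$ by $C(1+T)\exp(-c\,Nr^2/M)$; the linear prefactor in $T$ comes from a union bound over unit time intervals. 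On each unit interval we write $Z_t=e^{-\kappa(t-k)}Z_k+\int_k^te^{-\kappa(t-s)}\d M_s^N$. The second term is handled by the exponential martingale inequality applied to $\int_k^te^{\kappa s}\d M^N_s$, which has bounded bracket on the interval. The first term is a fixed-time Gaussian-type tail. Taking $r=M/2$ gives
\[\PP(\EEE_T^c)=\PP(\tau\leq T)\leq C(1+T)e^{-cN},\qquad c\asymp\kappa M,\]
which is \eqref{eq:moment}.

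The main obstacle is the maximal estimate uniform over exponentially long horizons. The naive Doob/BDG bound on $\sup_{t\le T}\int_0^t e^{\kappa s}\d M_s$ loses a factor $e^{\kappa T}$, so the localization to unit intervals and the exponential supermartingale $\exp(\theta\int_k^t e^{-\kappa(t-s)}\d M_s-\frac{\theta^2}{2}\cdot\frac{8M}{N}\cdot\ldots)$ have to be set up carefully. The restarting at integer times must also preserve the contraction $e^{-\kappa}$, so that the errors from previous intervals do not accumulate. An alternative that avoids the convolution is to apply Itô's formula directly to $\exp(\theta N V_t^N)$ for small $\theta$. The generator bound
\[\LLL e^{\theta NV}\leq\theta N e^{\theta N V}\bigl(C_0-\kappa V+2\theta V\bigr)\]
shows that for $\theta<\kappa/4$ the function $e^{\theta NV}$ is a Lyapunov function with drift $\leq \theta N(C_0'-\tfrac{\kappa}{2}V)e^{\theta NV}$. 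Combined with a unit-interval union bound and Markov's inequality, this again yields $C(1+T)e^{-cN}$; I would keep this as a fallback.
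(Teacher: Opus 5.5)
Your proposal is correct and follows essentially the paper's route: Itô's formula with the collective Lyapunov bound, variation of constants, localization at $\tau=\inf\{t:V_t^N\geq 2M\}$, and an exponential maximal estimate for the stopped convolution with a union bound over unit intervals. Your margin $M/2$ plays the role of the paper's margin $M$, where the paper takes $M:=M_{\rm init}+K_V/\kappa_V$. The only real difference is how the maximal bound is obtained: the paper verifies the kernel conditions of Lemma~\ref{lem:stochastic-convolution} with $\rho=1/2$, whereas your decomposition on $[k,k+1]$ uses $e^{-\kappa(t-s)}=e^{-\kappa(t-k)}e^{-\kappa(k-s)}$ to get the bound directly without dyadic chaining, which is valid for this particular kernel. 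One small correction: $\PP(\EEE_T^c)\leq\PP(\tau\leq T)$ is an inequality, not an equality.
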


\begin{proof}
The idea is to control $V_t^N$ by a stochastic convolution and then apply the estimates established in Appendix~\ref{app:convolution} on stochastic convolutions. To this end, we first apply Itô's formula to $V_t^N$ and control separately the resulting terms.

For $|x|\ge R$, \ref{hyp:H1} with $y=0$ gives $2x\cdot b(x)\leq -2\beta|x|^2+2|b(0)||x| \leq -\beta|x|^2+\frac{|b(0)|^2}{\beta}$. By continuity on $B_R$, there is therefore $K_b<\infty$
\[2x\cdot b(x)\leq-\beta|x|^2+K_b,\qquad x\in\RR^d.\]
Since $F$ is bounded, the inequality $2uv\leq\frac{\beta}{2}u^2+\frac{2}{\beta}v^2$ gives, for every $N\geq1$, every $(y_1,\ldots,y_N)\in(\RR^d)^N$, and every $x\in\RR^d$,
\[2x\cdot\frac1N\sum_{j=1}^NF(x,y_j)\leq2|x|\norme{F}{\infty}\leq\frac{\beta}{2}|x|^2+\frac{2\norme{F}{\infty}^2}{\beta}.\]
Consequently, there exist constants $\kappa_V,K_V>0$, independent of $N$, such that, for every configuration $(x_1,\ldots,x_N)\in(\RR^d)^N$,
\begin{equation}\label{eq:collective-lyapunov}
\frac1N\sum_{i=1}^N\parent{2x_i\cdot b(x_i)+\frac2N\sum_{j=1}^Nx_i\cdot F(x_i,x_j)+d}\leq-\frac{\kappa_V}{N}\sum_{i=1}^N|x_i|^2+K_V.
\end{equation}
Applying Itô's formula to each $|X_t^{i,N}|^2$, averaging over $i$, and using the preceding estimate yield
\begin{equation}\label{eq:V-ineq}
\d V_t^N\leq(-\kappa_VV_t^N+K_V)\,\d t+\d\MMM_t^N,
\end{equation}
where $(\MMM_t^N)_{t\geq0}$ is the continuous martingale defined by
\begin{equation}
\MMM_t^N:=\frac2N\sum_{i=1}^N\int_0^tX_s^{i,N}\cdot\d B_s^i,\qquad \d\langle\MMM^N\rangle_t=\frac4NV_t^N\,\d t.
\end{equation}
Multiplying \eqref{eq:V-ineq} by the integrating factor $e^{\kappa_Vt}$ and integrating between $0$ and $t$ give
\[V_t^N\leq e^{-\kappa_Vt}V_0^N+K_V\int_0^te^{-\kappa_V(t-s)}\,\d s+\int_0^te^{-\kappa_V(t-s)}\,\d\MMM_s^N.\]
Using $V_0^N\leq M_{\rm init}$, we obtain
\begin{equation}\label{eq:V-mild}
V_t^N\leq M+R_t^N,\qquad M:=M_{\rm init}+\frac{K_V}{\kappa_V},\qquad R_t^N:=\int_0^te^{-\kappa_V(t-s)}\,\d\MMM_s^N.
\end{equation}

We would now like to apply Lemma~\ref{lem:stochastic-convolution} to the stochastic convolution $R^N$. However, by definition of $\MMM^N$, the squared norm of its kernel satisfies
\[\frac1N\sum_{i=1}^N\int_0^t\left|2e^{-\kappa_V(t-s)}X_s^{i,N}\right|^2\,\d s=4\int_0^te^{-2\kappa_V(t-s)}V_s^N\,\d s.\]
The right-hand side depends on the random process $V^N$ and therefore cannot be bounded deterministically as required by the lemma. Such a bound is available as long as $V_s^N$ does not exceed $2M$, so we consider the first hitting time
\[\tau_M:=\inf\{t\geq0:V_t^N\geq2M\}\]
and define the stopped convolution
\[\overline R_t^N:=\int_0^te^{-\kappa_V(t-s)}\1_{\{s\leq\tau_M\}}\,\d\MMM_s^N=\frac1N\sum_{i=1}^N\int_0^tH_i(t,s)\cdot\d B_s^i,\]
where $H_i(t,s):=2e^{-\kappa_V(t-s)}X_s^{i,N}\1_{\{s\leq\tau_M\}}$. We now check the hypotheses of Lemma~\ref{lem:stochastic-convolution}. These kernels are jointly measurable and predictable in $s$ for every fixed $t$.
First, \eqref{eq:fixed-kernel-condition} holds, since for every $t\geq0$,
\[\frac1N\sum_{i=1}^N \int_0^t |H_i(t,s)|^2\,\d s \;=\; 4\int_0^t e^{-2\kappa_V(t-s)} V_s^N\mathbf{1}_{\{s<\tau_M\}}\,\d s \leq 8 M \int_0^t e^{-2\kappa_V(t-s)}\,\d s \leq \frac{4M}{\kappa_V} =: a.\]
Next, fix $0\leq u\leq t\leq u+1$ and set $\delta:=t-u$.
The increment condition splits into the old-noise part and the contribution from $(u,t]$
\[\frac1N\sum_{i=1}^N \int_0^u |H_i(t,s)-H_i(u,s)|^2\,\d s + \frac1N\sum_{i=1}^N \int_u^t |H_i(t,s)|^2\,\d s.\]
For the old-noise part, since $e^{-\kappa_V(t-s)}-e^{-\kappa_V(u-s)} = e^{-\kappa_V(u-s)}(e^{-\kappa_V\delta}-1)$,
we have, using $1-e^{-\kappa_V\delta}\leq \kappa_V\delta$
\[\begin{aligned}
\frac1N\sum_{i=1}^N \int_0^u |H_i(t,s)-H_i(u,s)|^2\,\d s &= 4(1-e^{-\kappa_V\delta})^2 \int_0^u e^{-2\kappa_V(u-s)} V_s^N\mathbf{1}_{\{s<\tau_M\}}\,\d s  \\
&\leq 8M(1-e^{-\kappa_V\delta})^2 \int_0^u e^{-2\kappa_V(u-s)}\,\d s \leq 4M\kappa_V\,\delta^2.
\end{aligned}\]
For the new-noise part,
\[\frac1N\sum_{i=1}^N \int_u^t |H_i(t,s)|^2\,\d s = 4\int_u^t e^{-2\kappa_V(t-s)} V_s^N\mathbf{1}_{\{s<\tau_M\}}\,\d s \leq
8M\int_u^t e^{-2\kappa_V(t-s)}\,\d s \leq 8M\,\delta.\]
Since $\delta\leq1$, the total increment variance is bounded by $4M\kappa_V\,\delta^2+8M\,\delta \leq 4M(\kappa_V+2)\delta$. Therefore, \eqref{eq:increment-kernel-condition} holds with
\[\rho:=\frac12,\qquad b:=4M(\kappa_V+2).\]
Since $M$, $a$, and $b$ are deterministic, applying Lemma~\ref{lem:stochastic-convolution} to $\overline R^N$ with $x=M/2$ yields
\[\PP\parent{\sup_{0\leq t\leq T}|\overline R_t^N|>\frac M2}\leq C(1+T)e^{-cN},\]
where one may take $C=C_{1/2}$ and $c=c_{1/2}M^2/(4(a+b))$.

Finally, it remains to relate the preceding estimate to $\PP(\EEE_T^c)$. By the definitions of $\EEE_T$ and $\tau_M$, we first have $\EEE_T^c \subset \{\tau_M\leq T\}$. 
Moreover, on the event $\event{\tau_M\leq T}$, the preceding estimate gives $\overline R_{\tau_M}^N\geq M>M/2$. Consequently,
\[\event{\tau_M\leq T}\subset\event{\sup_{0\leq t\leq T}|\overline R_t^N|>\frac M2}.\]
To obtain the first inequality, the continuity of $V^N$ gives $V_{\tau_M}^N=2M$, while $R_{\tau_M}^N=\overline R_{\tau_M}^N$. It follows from \eqref{eq:V-mild} that
\[2M=V_{\tau_M}^N\leq M+\overline R_{\tau_M}^N,\]
which gives $\overline R_{\tau_M}^N\geq M$. Combining these inclusions with the stochastic convolution estimate yields
\[\PP\parent{\EEE_T^c}\leq\PP(\tau_M\leq T)\leq C(1+T)e^{-cN}.\]
We have therefore proved \eqref{eq:moment}.
\end{proof}

\subsection{Analysis under bounded empirical second moments}

It remains to control \eqref{eq:distance-dual} on the event $\EEE_T$, on which the empirical second moment remains bounded throughout $[0,T]$. We first briefly show that the contribution coming from $B_K^c$ can be made arbitrarily small, and then reduce the remainder of the analysis to the compact ball $B_K$.

We begin by controlling the tails on $B_K^c$. Set $C_M:=2M+M_\infty$. For every $\delta>0$, choose $K:=C_M/\delta$. It follows from \eqref{eq:tail-second-moment} that, on $\EEE_T$,
\begin{equation}\label{eq:uniform-tail}
\sup_{0\leq t\leq T}\sup_{g \in \XXX, \substack{g(0)=0\\\norme{\nabla g}{\infty}\leq1}}\vabs{\int_{B_K^c}g(x)\,\nu_t^N(\d x)} \leq \sup_{0\leq t\leq T}\sup_{g \in \XXX, \substack{g(0)=0\\ \norme{\nabla g}{\infty}\leq1}}\int_{B_K^c}|g(x)|\,|\nu_t^N|(\d x)\leq\frac{C_M}{K}=\delta.
\end{equation}
Thus, uniformly in time and over all normalized $1$-Lipschitz test functions, the contribution over $B_K^c$ is at most $\delta$.

It remains to study the contribution inside $B_K$. The compactness of this ball allows us to approximate the class of normalized $1$-Lipschitz test functions appearing in \eqref{eq:distance-dual} by a finite family of smooth functions. These functions will be defined on all of $\RR^d$, so that the mild equation can still be applied on the whole space. The argument is organized as follows. In Step~1, we construct such a finite family. In Step~2, we derive the mild equation for each function in this family. In Step~3, we estimate separately the propagated initial fluctuation, the nonlinear quadratic term, and the stochastic convolution appearing in this equation.

\paragraph{Step 1: Reduction to a compact set and the \texorpdfstring{$\delta$}{delta}-net}
We first restrict the normalized $1$-Lipschitz test functions to $B_K$ to exploit its compactness. We then extend and regularize the approximating functions to obtain smooth functions on $\RR^d$. The approximation is only required on $B_K$.

\begin{lem}[Finite smooth net]\label{lem:net}
For every $K,\delta>0$, there exist an integer $m=m(K,\delta)\geq1$ and a finite family of functions $\GGG_{K,\delta}:=\{g_1,\ldots,g_m\}$, such that
\begin{itempuce}
\item $g_1,\ldots,g_m\in\DDD$;
\item for every $k\in\{1,\ldots,m\}$, $g_k(0)=0$ and $\norme{\nabla g_k}{\infty}\leq1$;
\item every globally Lipschitz function $g$ satisfying $g(0)=0$ and $\norme{\nabla g}{\infty}\leq1$ satisfies
\begin{equation}\label{eq:net-approx}
\min_{1\leq k\leq m}\norme{g-g_k}{L^\infty(B_K)}\leq\delta.
\end{equation}
\end{itempuce}
\end{lem}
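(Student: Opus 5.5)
The plan has two stages. First, Arzelà--Ascoli gives a finite net of Lipschitz functions on the compact ball. Second, each net element is extended, mollified, and cut off so that it lies in $\DDD$. Let $\Lambda_K$ be the set of functions $g:B_K\to\RR$ with $g(0)=0$ and Lipschitz constant at most $1$. On $B_K$ these functions satisfy $|g(x)|\leq|x|\leq K$, so $\Lambda_K$ is uniformly bounded and equicontinuous. By Arzelà--Ascoli it is compact in $C(B_K)$, hence totally bounded. We therefore pick $h_1,\ldots,h_m\in\Lambda_K$ forming a $\delta/3$-net of $\Lambda_K$ in $L^\infty(B_K)$. Any globally $1$-Lipschitz $g$ with $g(0)=0$ restricts to an element of $\Lambda_K$, so it is $\delta/3$-close on $B_K$ to some $h_k$.

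Next, each $h_k$ is turned into an element of $\DDD$ that stays $2\delta/3$-close to $h_k$ on $B_K$. We extend $h_k$ to $\RR^d$ without increasing its Lipschitz constant, for instance by the McShane extension $\tilde h_k(x):=\min_{y\in B_K}(h_k(y)+|x-y|)$. To make the gradient compactly supported, we compose with a radial retraction: set $\hat h_k:=\tilde h_k\circ\pi$, where $\pi$ is the nearest-point projection onto $B_{K+2}$. Since $\pi$ is $1$-Lipschitz, $\hat h_k$ is $1$-Lipschitz, equals $h_k$ on $B_K$, and is constant along rays outside $B_{K+2}$. That last property still does not give compact support of the gradient, which is the main obstacle.

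To handle it, we use a smooth cutoff of the values rather than of the domain. Choose $\chi\in C^\infty(\RR)$ with $\chi(s)=s$ for $|s|\leq K+1$, $|\chi'|\leq1$, and $\chi$ constant for $|s|\geq K+3$. Mollify with a radial mollifier $\rho_\eta$ with $\eta\leq\min(1,\delta/6)$, and set
\[
g_k:=\chi\circ(\rho_\eta*\tilde h_k)-\chi\bigl((\rho_\eta*\tilde h_k)(0)\bigr).
\]
The mollification is smooth, $1$-Lipschitz, and within $\eta$ of $\tilde h_k$. Since $|\tilde h_k(x)|\leq|x|$, we get $|\rho_\eta*\tilde h_k|\leq K+1$ on $B_K$, so $\chi$ acts as the identity there. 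Outside $B_{R'}$ with $R'$ large, however, $\tilde h_k$ need not exceed $K+3$ in absolute value: $\tilde h_k(x)\geq|x|-2K$ only bounds it from below, and a McShane min can stay bounded. I would therefore replace the extension by $\tilde h_k(x):=h_k(\pi_K(x))+\psi(|x|)$, where $\pi_K$ is the projection onto $B_K$ and $\psi$ is a function vanishing on $[0,K]$. This is not $1$-Lipschitz in general, so instead I would take $\tilde h_k:=\tfrac12\bigl(h_k\circ\pi_K\bigr)+\tfrac12(|x|-K)_+$. To restore accuracy, build the net for the class rescaled by $1/2$, using that approximating $g/2$ within $\delta/2$ approximates $g$ within $\delta$ after doubling. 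The doubling can be absorbed by instead using the weights $(1-\theta)$ and $\theta$ with $\theta$ small, at the cost of allowing Lipschitz constant $1-\theta$ for the net elements.

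With this extension, $|\tilde h_k|\to\infty$ radially, so $\chi\circ(\rho_\eta*\tilde h_k)$ is constant outside a compact set. Its gradient is therefore smooth and compactly supported, with $\norme{\nabla g_k}{\infty}\leq1$. Subtracting the value at $0$ gives $g_k(0)=0$. The error on $B_K$ is then at most the net error plus $2\eta$ plus the error from the weights, and these three contributions are made to sum to at most $\delta$. The delicate point I expect is exactly this compatibility: keeping the gradient bound at most $1$, making the gradient compactly supported, and keeping the approximation on $B_K$. Working with slightly contracted Lipschitz constants $1-\theta$ in the net, with $\theta K\leq\delta/3$, is what I would try first to reconcile them.
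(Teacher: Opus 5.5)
Your final construction is correct, but it gets the compactly supported gradient by a different device from the paper.

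\textbf{Your route.} With $\tilde h_k:=(1-\theta)\,h_k\circ\pi_K+\theta(|x|-K)_+$, the extension is $1$-Lipschitz, vanishes at $0$, equals $(1-\theta)h_k$ on $B_K$, and satisfies $\tilde h_k(x)\geq\theta(|x|-K)-(1-\theta)K\to+\infty$. The smooth saturation $\chi$ therefore makes $\chi\circ(\rho_\eta*\tilde h_k)$ constant outside a large ball. On the values taken on $B_K$, $\chi$ acts as the identity. The error on $B_K$ is at most $\delta/3+2\eta+\theta K\leq\delta$.

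\textbf{The paper's route.} It cuts off the \emph{function} itself before mollifying. It takes the minimum of the McShane extension with $\operatorname{dist}(x,B_{2K}^c)$. This minimum is still $1$-Lipschitz. It equals $f_k$ on $B_K$, since $f_k(x)\leq K\leq 2K-|x|$ there. It vanishes outside $B_{2K}$, since the inf-convolution is $\geq|x|-2K\geq0$ there. Mollifying this compactly supported function and subtracting its value at $0$ lands in $\DDD$ directly. There is no auxiliary $\chi$, no contraction parameter, and the error budget is only $\delta/2+2\varepsilon$. Your saturation trick is slightly heavier, but it works for any $1$-Lipschitz extension that tends to $+\infty$.

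\textbf{Your detour was unnecessary.} The obstacle you saw in the McShane extension is not real. The bound $\tilde h_k(x)\geq|x|-2K$ you wrote already forces $\tilde h_k\to+\infty$. Hence, for $\eta\leq1$, $\chi\circ(\rho_\eta*\tilde h_k)$ with the plain McShane extension is constant outside $B_{3K+4}$, and the $\theta$-contraction can be dropped.

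\textbf{One intermediate idea fails.} Using weights $\tfrac12,\tfrac12$ and then ``doubling'' does not work as stated, because doubling destroys the Lipschitz bound. It is superseded by the $(1-\theta,\theta)$ version, which is the one to keep if you follow that route.
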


The proof is given in Appendix~\ref{app:finite-net}.

Recall that $C_M:=2M+M_\infty$. Fix $\delta>0$ and set $K:=C_M/\delta$. By Lemma~\ref{lem:net}, this choice yields a finite family $\GGG_\delta:=\{g_1,\ldots,g_m\}\subset\DDD$.

\begin{lem}[Finite-dimensional reduction]\label{lem:finite-reduction}
On the event \(\EEE_T\), one has
\begin{equation}\label{eq:finite-reduction}
\sup_{0\leq t\leq T}\norme{\nu_t^N}{\ast} \leq\, \sup_{0\leq t\leq T} \max_{1 \leq k \leq m} \vabs{\ps{g_k,\nu_t^N}} + 4\delta.
\end{equation}
\end{lem}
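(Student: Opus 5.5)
Fix $t\in[0,T]$ and a test function $g\in\XXX$ with $g(0)=0$ and $\norme{\nabla g}{\infty}\leq1$. By \eqref{eq:dualnorm-X}, it suffices to show that on $\EEE_T$
\[\vabs{\ps{g,\nu_t^N}}\leq\max_{1\leq k\leq m}\vabs{\ps{g_k,\nu_t^N}}+4\delta,\]
and then take suprema over $g$ and over $t$.

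Since $g$ is globally $1$-Lipschitz with $g(0)=0$, Lemma~\ref{lem:net} provides an index $k$ with $\norme{g-g_k}{L^\infty(B_K)}\leq\delta$. We then split
\[\ps{g,\nu_t^N}=\ps{g_k,\nu_t^N}+\int_{B_K}(g-g_k)\,\d\nu_t^N+\int_{B_K^c}g\,\d\nu_t^N-\int_{B_K^c}g_k\,\d\nu_t^N.\]

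The three error terms are handled as follows.
\begin{itemize}
\item[(a)] \emph{Inside $B_K$.} Using $|\nu_t^N|\leq\mu_t^N+\nu_\infty^\alpha$, which has total mass at most $2$, the integral over $B_K$ is bounded by $2\delta$.
\item[(b)] \emph{The tail of $g$.} Since $g\in\XXX$ is normalized, \eqref{eq:uniform-tail} bounds the integral of $g$ over $B_K^c$ by $\delta$ on $\EEE_T$. Here we use the choice $K=C_M/\delta$ and the bound $V_t^N\leq2M$ on $\EEE_T$.
\item[(c)] \emph{The tail of $g_k$.} We have $g_k\in\DDD\subset\XXX$, with $g_k(0)=0$ and $\norme{\nabla g_k}{\infty}\leq1$. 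So $g_k$ also belongs to the class in \eqref{eq:uniform-tail}, and its tail integral is at most $\delta$.
\end{itemize}

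Summing (a), (b) and (c) gives $2\delta+\delta+\delta=4\delta$. This proves the claimed pointwise bound, and taking suprema yields \eqref{eq:finite-reduction}.

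No step is a real obstacle. The one point to watch is that the total-variation mass of $\nu_t^N$ is at most $2$ rather than $1$; this is exactly what yields the constant $4$. One should also check that the supremum in \eqref{eq:dualnorm-X} may be restricted to $\XXX$, so that Lemma~\ref{lem:net} applies; this holds because every $g\in\XXX$ with $\norme{\nabla g}{\infty}\leq1$ is globally Lipschitz.
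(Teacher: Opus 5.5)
Your proof is correct and follows essentially the same route as the paper. You approximate $g$ by a net element $g_k$, bound the contribution on $B_K$ by $2\delta$ using $|\nu_t^N|(B_K)\leq 2$, and bound the contribution on $B_K^c$ by $2\delta$ using the second-moment tail bound on $\EEE_T$. The only cosmetic difference is that you apply \eqref{eq:uniform-tail} separately to $g$ and to $g_k$, whereas the paper directly uses $|g-g_k|\leq 2|x|$ on $B_K^c$.
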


\begin{proof}
Let $t\in[0,T]$ and fix an admissible test function $g$, namely $g(0)=0$ and $\norme{\nabla g}{\infty}\leq1$. By Lemma~\ref{lem:net}, we can choose $g_k\in\GGG_\delta$ such that $\norme{g-g_k}{L^\infty(B_K)}\leq\delta$. By the triangle inequality,
\[\vabs{\ps{g,\nu_t^N}}\leq\vabs{\ps{g_k,\nu_t^N}}+\vabs{\ps{g-g_k,\nu_t^N}}.\]
It is therefore sufficient to show that $\vabs{\ps{g-g_k,\nu_t^N}}\leq4\delta$. To this end, we split the last pairing over $B_K$ and $B_K^c$
\[\ps{g-g_k,\nu_t^N} := \int_{\RR^d} (g-g_k) \,\d\nu_t^N = \int_{B_K}(g-g_k)\,\d\nu_t^N + \int_{B_K^c}(g-g_k)\,\d\nu_t^N.\]
On $B_K^c$, both $g$ and $g_k$ are globally $1$-Lipschitz and vanish at the origin. Hence, $|g(x)-g_k(x)|\leq2|x|$. Since $|\nu_t^N|\leq\mu_t^N+\nu_\infty^\alpha$, it follows from \eqref{eq:uniform-tail} that, on $\EEE_T$,
\[\left|\int_{B_K^c}(g-g_k)\,\d\nu_t^N\right|\leq2\int_{B_K^c}|x|\,\parent{\mu_t^N+\nu_\infty^\alpha}(\d x)\leq\frac{2C_M}{K}=2\delta.\]
On $B_K$, since $\mu_t^N$ and $\nu_\infty^\alpha$ are probability measures, $|\nu_t^N|(B_K)\leq2$. Therefore,
\[\left|\int_{B_K}(g-g_k)\,\d\nu_t^N\right|\leq\norme{g-g_k}{L^\infty(B_K)}|\nu_t^N|(B_K)\leq2\delta.\]
Combining the two estimates gives
\[\vabs{\ps{g,\nu_t^N}}\leq\vabs{\ps{g_k,\nu_t^N}}+4\delta.\]
Taking the supremum over all admissible test functions $g$ and then over $t\in[0,T]$ yields \eqref{eq:finite-reduction}.
\end{proof}

\paragraph{Step 2: The mild equation}
Lemma~\ref{lem:finite-reduction} reduces the problem to the study of the finite family $\GGG_\delta$ and therefore to the control of the term
\[\sup_{0\leq t\leq T} \max_{1 \leq k \leq m} \vabs{\ps{g_k,\nu_t^N}}.\]
To control this term, we follow the mild-formulation approach of \cite[Proposition~A.1]{BGP14} and establish the following representation of the empirical fluctuations in terms of the linearized semigroup.

\begin{prop}[Mild equation]\label{prop:mild}
For every $N\geq1$, the following identity holds almost surely, simultaneously for every $t\geq0$ and every $g \in \GGG_\delta$
\begin{equation}\label{eq:me}
\ps{g,\nu_t^N}=\ps{\S_t g,\nu_0^N}+\int_0^t\iint_{\RR^d\times\RR^d}\nabla \S_{t-s}g(x)\cdot F(x,y)\,\nu_s^N(\d x)\nu_s^N(\d y)\,\d s+z_t^N(g),
\end{equation}
where
\begin{equation}\label{eq:ztN}
z_t^N(g):=\frac1N\sum_{i=1}^N\int_0^t\nabla \S_{t-s}g(X_s^{i,N})\cdot\d B_s^i.
\end{equation}
\end{prop}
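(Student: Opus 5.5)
We follow the standard mild-formulation argument: apply Itô's formula to the time-dependent test function $s\mapsto \S_{t-s}g$ evaluated along each particle, for fixed terminal time $t$. Then use the backward equation \eqref{eq:S-backwar-equation} to cancel the linear part of the drift.

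Fix $g\in\GGG_\delta$ and $t>0$, and set $\varphi_s:=\S_{t-s}g$ for $0\leq s\leq t$. By Proposition~\ref{prop:sgfpl}, $\varphi_s\in\XXX$ and $\partial_s\varphi_s=-\AAA\varphi_s=-\LLL_\alpha\varphi_s-\KKK\varphi_s$. Itô's formula for each particle, averaged over $i$, gives
\[
\d\ps{\varphi_s,\mu_s^N}=\ps{\partial_s\varphi_s+\tfrac12\Delta\varphi_s+\nabla\varphi_s\cdot(b+\Phi(\mu_s^N)),\mu_s^N}\,\d s+\frac1N\sum_i\nabla\varphi_s(X_s^{i,N})\cdot\d B_s^i .
\]
Writing $\Phi(\mu_s^N)=\alpha+\Phi(\nu_s^N)$, the drift becomes
\[
\ps{\nabla\varphi_s\cdot\Phi(\nu_s^N),\mu_s^N}-\ps{\KKK\varphi_s,\mu_s^N}.
\]
Now split $\mu_s^N=\nu_\infty^\alpha+\nu_s^N$ in the first pairing. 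By Fubini and the definition \eqref{eq:A-dual}, $\ps{\nabla\varphi_s\cdot\Phi(\nu_s^N),\nu_\infty^\alpha}=\ps{\KKK\varphi_s,\nu_s^N}$. Also $\ps{\AAA\varphi_s,\nu_\infty^\alpha}=0$: indeed $\ps{\LLL_\alpha f,\nu_\infty^\alpha}=0$ by invariance, and $\ps{\KKK f,\nu_\infty^\alpha}=\ps{\nabla f\cdot\alpha,\nu_\infty^\alpha}$, which is the interaction part of the stationarity of $\nu_\infty^\alpha$. Hence the drift reduces exactly to the quadratic term $\iint\nabla\varphi_s(x)\cdot F(x,y)\,\nu_s^N(\d x)\nu_s^N(\d y)$.

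Since $\nu_\infty^\alpha$ is time-independent, one can equivalently subtract $\ps{\varphi_s,\nu_\infty^\alpha}$ and use
\[
\tfrac{\d}{\d s}\ps{\S_{t-s}g,\nu_\infty^\alpha}=-\ps{\AAA\S_{t-s}g,\nu_\infty^\alpha}=0.
\]
Integrating over $[0,t]$ and using $\varphi_t=g$, $\varphi_0=\S_tg$ gives \eqref{eq:me} for fixed $t$, almost surely.

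Two technical points remain.

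\emph{Justifying Itô's formula.} The map $(s,x)\mapsto\S_{t-s}g(x)$ must be $C^{1,2}$ on $[0,t)\times\RR^d$. Spatial $C^2$ regularity with bounded Hessian holds by Lemma~\ref{lem:hessian}, with a $ (t-s)^{-1/2}$ singularity only at $s=t$. Time differentiability follows from \eqref{eq:S-backwar-equation}, and continuity of $\partial_s\varphi$ from Lemma~\ref{lem:terminal-time}. The singularity at $s=t$ is handled by applying Itô on $[0,t-\eta]$ and letting $\eta\to0$. The drift integrand is bounded by $\norme{\nabla\S_{t-s}g}{\infty}\norme{F}{\infty}\cdot 4$, which is integrable. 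The stochastic integrals converge in $L^2$ since $\nabla\S_{t-s}g$ is bounded, and $\ps{\S_{t-\eta}g,\nu_{t-\eta}^N}\to\ps{g,\nu_t^N}$ by \eqref{eq:terminal-small} and continuity of the trajectories. Linear growth of $\AAA\varphi_s$ together with finite moments ensures that the $\d s$ terms are integrable. I expect this to be the main (though routine) obstacle, together with the next point.

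\emph{Simultaneity in $t$.} For fixed $t$ the identity holds a.s. The left side and the first two terms on the right are continuous in $t$: continuity of $t\mapsto\S_tg$ in gradient norm follows from Lemma~\ref{lem:terminal-time}, and the integrand is bounded. We then need a continuous version of $t\mapsto z_t^N(g)$. One obtains it from the increment bound \eqref{eq:terminal-increment} and Kolmogorov's criterion, as in Appendix~\ref{app:convolution}, or by defining $z_t^N(g)$ as the difference of the other terms. With these continuous versions, the identity holds a.s. for all rational $t$, hence for all $t$. Since $\GGG_\delta$ is finite, it then holds simultaneously for all $g\in\GGG_\delta$.
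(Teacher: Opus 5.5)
Your route is the same as the paper's: Itô's formula for $\S_{t-s}g(X_s^{i,N})$, the backward equation to cancel the linear drift, then subtraction of the $\nu_\infty^\alpha$ contribution. However, the identity $\ps{\AAA f,\nu_\infty^\alpha}=0$, which you use twice, is false.

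Invariance of $\nu_\infty^\alpha$ under the frozen diffusion only gives $\ps{\LLL_\alpha f,\nu_\infty^\alpha}=0$, i.e.\ $\ps{\tfrac12\Delta f+(b+\alpha)\cdot\nabla f,\nu_\infty^\alpha}=0$. It does not make the piece $\ps{\alpha\cdot\nabla f,\nu_\infty^\alpha}=\ps{\KKK f,\nu_\infty^\alpha}$ vanish separately. In the OU--cosine model, $\ps{\KKK f,\nu_\infty^\alpha}=Je^{-1/4}\cos\alpha\,\ps{f',\nu_\infty^\alpha}=\alpha\,\ps{f',\nu_\infty^\alpha}$. This is nonzero as soon as $\alpha\neq0$ and $\ps{f',\nu_\infty^\alpha}\neq0$. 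Conceptually, $\AAA^\ast$ is the linearization acting on zero-mass perturbations, and $\AAA^\ast\nu_\infty^\alpha=-\nabla\cdot(\alpha\nu_\infty^\alpha)\neq0$. Likewise, the paper's formula $\ps{\S_ug,\nu_\infty^\alpha}=\ps{g,\nu_\infty^\alpha}+\int_0^u\Omega_r^g(\alpha)\,\d r$, from the proof of Proposition~\ref{prop:sgfpl}, shows that $u\mapsto\ps{\S_ug,\nu_\infty^\alpha}$ is not constant.

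Both of your intermediate claims are therefore wrong. After your (correct) Fubini step, the drift of $\ps{\varphi_s,\mu_s^N}$ is $Q_s-\ps{\KKK\varphi_s,\nu_\infty^\alpha}$, where $Q_s$ is the quadratic term, not $Q_s$ alone. Also $\ps{g,\nu_\infty^\alpha}-\ps{\S_tg,\nu_\infty^\alpha}=-\int_0^t\ps{\KKK\varphi_s,\nu_\infty^\alpha}\,\d s$, not $0$. The two errors cancel exactly, which is why you land on the correct formula.

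The repair is to keep this term. Since $\frac{\d}{\d s}\ps{\varphi_s,\nu_\infty^\alpha}=-\ps{\AAA\varphi_s,\nu_\infty^\alpha}=-\ps{\KKK\varphi_s,\nu_\infty^\alpha}$, it cancels the $-\ps{\KKK\varphi_s,\nu_\infty^\alpha}$ in the particle drift when you pass from $\mu_s^N$ to $\nu_s^N$. This is precisely the paper's bookkeeping: it obtains the $\nu_\infty^\alpha$ identity by Itô's formula for the stationary McKean--Vlasov process, then uses the four-term expansion of the quadratic form. With that correction your proof coincides with the paper's.

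Your treatment of the endpoint $s=t$ and of simultaneity in $t$ (continuous versions, rational times, finite family) supplies details the paper leaves implicit. One small point there: Lemma~\ref{lem:terminal-time} controls only $\nabla\S_ug$. You need continuity of $\Delta\S_ug$ in $u>0$ for $\partial_s\varphi_s=-\AAA\varphi_s$ to be continuous. You can get it by writing $\S_{u+h}g-\S_ug=\S_{u/2}(\S_{u/2+h}g-\S_{u/2}g)$ and combining \eqref{eq:hessian} with \eqref{eq:terminal-increment}.
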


\begin{proof}
Fix $g\in\GGG_\delta$ and $t>0$, and set $g_s:=\S_{t-s}g$ for $s\in[0,t]$. 

We first compute $\ps{g,\mu_t^N}$. Since the semigroup satisfies $\partial_r\S_rg=\AAA \S_rg$, we have $\partial_sg_s=-\AAA g_s$. Applying It\^o's formula to the process $g_s(X_s^{i,N})$, we obtain
\[\begin{aligned}
\d g_s(X_s^{i,N})={}&\partial_sg_s(X_s^{i,N})\,\d s+\nabla g_s(X_s^{i,N})\cdot\d X_s^{i,N}+\frac12\Delta g_s(X_s^{i,N})\,\d s\\
={}&\parent{-\AAA g_s(X_s^{i,N})+\nabla g_s(X_s^{i,N})\cdot\parent{b(X_s^{i,N})+\Phi(\mu_s^N)(X_s^{i,N})}+\frac12\Delta g_s(X_s^{i,N})}\d s\\
&+\nabla g_s(X_s^{i,N})\cdot\d B_s^i.
\end{aligned}\]
By the definition of $\LLL_\alpha$, $\nabla g_s\cdot b+\frac12\Delta g_s=\LLL_\alpha g_s-\nabla g_s\cdot\alpha$, while $\AAA=\LLL_\alpha+\KKK$ gives $-\AAA g_s=-\LLL_\alpha g_s-\KKK g_s$. Consequently, $-\AAA g_s+\nabla g_s\cdot b+\frac12\Delta g_s=-\nabla g_s\cdot\alpha-\KKK g_s$, and therefore
\[\d g_s(X_s^{i,N})=\nabla g_s(X_s^{i,N})\cdot\parent{\Phi(\mu_s^N)(X_s^{i,N})-\alpha(X_s^{i,N})}\,\d s-\KKK g_s(X_s^{i,N})\,\d s+\nabla g_s(X_s^{i,N})\cdot\d B_s^i.\]
Since $\alpha=\Phi(\nu_\infty^\alpha)$ and $\KKK g_s(y)=\int_{\RR^d}F(x,y)\cdot\nabla g_s(x)\,\nu_\infty^\alpha(\d x)$, the preceding identity becomes
\[\begin{aligned}
\d g_s(X_s^{i,N})={}&\int_{\RR^d}\nabla g_s(X_s^{i,N})\cdot F(X_s^{i,N},y)\,\mu_s^N(\d y)\,\d s\\
	&-\int_{\RR^d}\nabla g_s(X_s^{i,N})\cdot F(X_s^{i,N},y)\,\nu_\infty^\alpha(\d y)\,\d s\\
	&-\int_{\RR^d}\nabla g_s(x)\cdot F(x,X_s^{i,N})\,\nu_\infty^\alpha(\d x)\,\d s+\nabla g_s(X_s^{i,N})\cdot\d B_s^i.
\end{aligned}\]
Integrating this stochastic differential over $[0,t]$, summing the resulting identities over $i=1,\ldots,N$, and dividing by $N$, we arrive at
\[\begin{aligned}
\ps{g,\mu_t^N}-\ps{\S_tg,\mu_0^N}={}&\int_0^t\iint_{\RR^d\times\RR^d}\nabla g_s(x)\cdot F(x,y)\,\mu_s^N(\d x)\mu_s^N(\d y)\,\d s\\
	&-\int_0^t\iint_{\RR^d\times\RR^d}\nabla g_s(x)\cdot F(x,y)\,\mu_s^N(\d x)\nu_\infty^\alpha(\d y)\,\d s\\
	&-\int_0^t\iint_{\RR^d\times\RR^d}\nabla g_s(x)\cdot F(x,y)\,\nu_\infty^\alpha(\d x)\mu_s^N(\d y)\,\d s+z_t^N(g).
\end{aligned}\]
We now turn to $\ps{g,\nu_\infty^\alpha}$. Let
$(X_s^{\nu_\infty^\alpha})_{s\geq0}$ denote the solution of the McKean--Vlasov
equation~\eqref{eq:mvl} initialized with law $\nu_\infty^\alpha$. Since
$\nu_\infty^\alpha$ is invariant, the interaction field remains equal to
$\Phi(\nu_\infty^\alpha)=\alpha$, and the process solves equation~\eqref{eq:mvll}. Applying It\^o's formula to $g_s(X_s^{\nu_\infty^\alpha})$ and using $\AAA=\LLL_\alpha+\KKK$, we obtain
\[\begin{aligned}
\d g_s(X_s^{\nu_\infty^\alpha}) &=\parent{-\AAA g_s+\LLL_\alpha g_s}(X_s^{\nu_\infty^\alpha})\,\d s+\nabla g_s(X_s^{\nu_\infty^\alpha}) \cdot\d B_s\\
	&=-\KKK g_s(X_s^{\nu_\infty^\alpha})\,\d s+\nabla g_s(X_s^{\nu_\infty^\alpha})\cdot\d B_s.
\end{aligned}\]
Integrating between $0$ and $t$ gives
\[g(X_t^{\nu_\infty^\alpha})-\S_tg(X_0^{\nu_\infty^\alpha})=-\int_0^t\KKK g_s(X_s^{\nu_\infty^\alpha})\,\d s+\int_0^t\nabla g_s(X_s^{\nu_\infty^\alpha})\cdot\d B_s.\]
Taking expectations, using $\Law(X_s^{\nu_\infty^\alpha})=\nu_\infty^\alpha$ and the fact that the stochastic integral has zero expectation, yields
\[\ps{g,\nu_\infty^\alpha} - \ps{\S_tg,\nu_\infty^\alpha} = -\int_0^t\ps{\KKK g_s,\nu_\infty^\alpha}\,\d s = -\int_0^t\iint_{\RR^d\times\RR^d}\nabla g_s(x)\cdot F(x,y)\,\nu_\infty^\alpha(\d x)\nu_\infty^\alpha(\d y)\,\d s.\]

Finally, recalling that $\nu_s^N=\mu_s^N-\nu_\infty^\alpha$, we subtract the two preceding integral equations and obtain
\[\begin{aligned}
\ps{g,\nu_t^N}={}&\ps{\S_tg,\nu_0^N}+\int_0^t\Bigg[\iint_{\RR^d\times\RR^d}\nabla g_s(x)\cdot F(x,y)\,\mu_s^N(\d x)\mu_s^N(\d y)\\
	&-\iint_{\RR^d\times\RR^d}\nabla g_s(x)\cdot F(x,y)\,\mu_s^N(\d x)\nu_\infty^\alpha(\d y)\\
	&-\iint_{\RR^d\times\RR^d}\nabla g_s(x)\cdot F(x,y)\,\nu_\infty^\alpha(\d x)\mu_s^N(\d y)\\
	&+\iint_{\RR^d\times\RR^d}\nabla g_s(x)\cdot F(x,y)\,\nu_\infty^\alpha(\d x)\nu_\infty^\alpha(\d y)\Bigg]\,\d s+z_t^N(g).
\end{aligned}\]
Using the algebraic identity
\[\begin{aligned}
\iint H(x,y)(\mu-\nu)(\d x)(\mu-\nu)(\d y)={}&\iint H(x,y)\mu(\d x)\mu(\d y)-\iint H(x,y)\mu(\d x)\nu(\d y)\\
	&-\iint H(x,y)\nu(\d x)\mu(\d y)+\iint H(x,y)\nu(\d x)\nu(\d y),
\end{aligned}\]
with $H(x,y):=\nabla \S_{t-s}g(x)\cdot F(x,y)$, $\mu=\mu_s^N$ and $\nu=\nu_\infty^\alpha$, we recover exactly \eqref{eq:me}.
\end{proof}

\paragraph{Step 3: Estimating the terms of the mild equation}
It remains to estimate each term on the right-hand side of the mild equation~\eqref{eq:me}, uniformly over $[0,T]$. Using the estimates for the semigroup $(\S_t)_{t\geq0}$, we obtain the following result.

\begin{lem}[Estimates of the terms in the mild equation]\label{lem:mild-estimates} 
On the event $\EEE_T$, for every $0\leq t\leq T$,
\begin{equation}\label{eq:main-ineq-before-exit}
\max_{1 \leq k \leq m} \vabs{\ps{g_k,\nu_t^N}} \leq C_S\norme{\nu_0^N}{\ast}+K_0\sup_{0\leq s\leq t}\norme{\nu_s^N}{\ast}^2+\max_{1 \leq k \leq m}|z_t^N(g_k)|,
\end{equation}
where $K_0:=L_F\parent{C_H\sqrt{\frac{\pi}{\lambda_H}}+\frac{C_S}{\lambda_S}}$.

Moreover, there exists $C_m<\infty$, depending on the finite family $\GGG_\delta$ such that, for every $T,x>0$,
\begin{equation}\label{eq:mild-noise-max}
\PP\parent{\sup_{0\leq t\leq T} \, \max_{1\leq k\leq m}\vabs{z_t^N(g_k)}>x}\leq C_{1/2}m(1+T)\exp\parent{-c_{1/2}\frac{Nx^2}{C_m^2}}.
\end{equation}
\end{lem}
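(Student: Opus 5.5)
We bound the three terms of the mild equation \eqref{eq:me} one at a time. Fix $g=g_k\in\GGG_\delta$ and $0\leq t\leq T$.

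\emph{Propagated initial fluctuation.} Since $\S_tg\in\XXX$, \eqref{eq:dual-pairing-Lip} and \ref{hyp:H3} give
\[|\ps{\S_tg,\nu_0^N}|\leq\norme{\nabla\S_tg}{\infty}\norme{\nu_0^N}{\ast}\leq C_S\norme{\nu_0^N}{\ast},\]
because $\norme{\nabla g}{\infty}\leq1$ and $e^{-\lambda_St}\leq1$.

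\emph{Quadratic term.} For fixed $s<t$, set $\psi_s(x):=\int\nabla\S_{t-s}g(x)\cdot F(x,y)\,\nu_s^N(\d y)$, so that the double integral is $\ps{\psi_s,\nu_s^N}$. We apply \eqref{eq:dual-pairing-Lip} twice.
\begin{itemize}
\item For fixed $x$, the map $y\mapsto\nabla\S_{t-s}g(x)\cdot F(x,y)$ has gradient bounded by $\norme{\nabla\S_{t-s}g}{\infty}L_F$. Hence $\norme{\psi_s}{\infty}\leq L_FC_Se^{-\lambda_S(t-s)}\norme{\nu_s^N}{\ast}$.
\item Differentiating in $x$ produces two terms. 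The first, $\nabla^2\S_{t-s}g(x)F(x,y)$, has $y$-gradient bounded by $L_F\norme{\nabla^2\S_{t-s}g}{\infty}$. The second, $\nabla\S_{t-s}g(x)\cdot\nabla_xF(x,y)$, has $y$-gradient bounded by $L_F\norme{\nabla\S_{t-s}g}{\infty}$, using $\norme{\nabla^2F}{\infty}\leq L_F$.
\end{itemize}
Therefore, by Lemma~\ref{lem:hessian} and \ref{hyp:H3},
\[\norme{\nabla\psi_s}{\infty}\leq L_F\parent{C_H(t-s)^{-1/2}e^{-\lambda_H(t-s)}+C_Se^{-\lambda_S(t-s)}}\norme{\nu_s^N}{\ast}.\]
Only the gradient bound is needed in the second application of \eqref{eq:dual-pairing-Lip}, since $\nu_s^N$ has zero mass. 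This gives
\[|\ps{\psi_s,\nu_s^N}|\leq L_F\parent{C_H(t-s)^{-1/2}e^{-\lambda_H(t-s)}+C_Se^{-\lambda_S(t-s)}}\norme{\nu_s^N}{\ast}^2.\]
The $C^1$ regularity of $\psi_s$ follows from $\S_{t-s}g\in\XXX$ and $F\in C_b^2$. The pairings are finite because $\mu_s^N$ has finite support and $M_\infty<\infty$; on $\EEE_T$ all quantities are in fact finite. Integrating in $s$ uses
\[\int_0^\infty r^{-1/2}e^{-\lambda_Hr}\,\d r=\sqrt{\pi/\lambda_H},\qquad\int_0^\infty e^{-\lambda_Sr}\,\d r=1/\lambda_S,\]
so the quadratic term is at most $K_0\sup_{s\leq t}\norme{\nu_s^N}{\ast}^2$. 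Together with the first term, taking the maximum over $k$ yields \eqref{eq:main-ineq-before-exit}.

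\emph{Stochastic convolution.} This is the main step. Write $z_t^N(g)=\frac1N\sum_i\int_0^tH_i(t,s)\cdot\d B_s^i$ with $H_i(t,s):=\nabla\S_{t-s}g(X_s^{i,N})$, a predictable, jointly measurable kernel. We apply Lemma~\ref{lem:stochastic-convolution} to each $g_k$ separately and then take a union bound over the $m$ functions. This union bound is where the factor $m$ comes from.

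The fixed-time condition \eqref{eq:fixed-kernel-condition} holds deterministically:
\[\frac1N\sum_i\int_0^t|H_i(t,s)|^2\,\d s\leq C_S^2\int_0^te^{-2\lambda_S(t-s)}\,\d s\leq\frac{C_S^2}{2\lambda_S}.\]

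For the increment condition \eqref{eq:increment-kernel-condition}, take $0\leq u\leq t\leq u+1$ and split the increment into old-noise and new-noise parts. The new-noise part is bounded by $C_S^2(t-u)$ using \ref{hyp:H3}. The old-noise part requires \eqref{eq:terminal-increment} with $r=u-s$ and $h=t-u$:
\[\norme{\nabla\S_{t-s}g-\nabla\S_{u-s}g}{\infty}\leq C_SC_g(t-u)e^{-\lambda_S(u-s)}.\]
Its square integrates to at most $C_S^2C_g^2(t-u)^2/(2\lambda_S)$. This is exactly why the net was chosen in $\DDD$: Lemma~\ref{lem:terminal-time} applies only there, and $C_g$ is finite. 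Since $t-u\leq1$, the total increment variance is bounded by $b\,(t-u)$, so \eqref{eq:increment-kernel-condition} holds with $\rho=1/2$ and a deterministic constant $b$.

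All constants are deterministic and depend only on $C_S$, $\lambda_S$ and $\max_kC_{g_k}$, never on $N$ or $T$. So the lemma gives, for each $k$,
\[\PP\parent{\sup_{0\leq t\leq T}|z_t^N(g_k)|>x}\leq C_{1/2}(1+T)\exp\parent{-c_{1/2}Nx^2/(a+b)}.\]
We set $C_m^2:=a+b$, taking the maximum over $k$, and the union bound over $k=1,\ldots,m$ gives \eqref{eq:mild-noise-max}.
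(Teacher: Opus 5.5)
Your proposal is correct and follows the paper's proof essentially step by step. It uses the same \ref{hyp:H3} bound for the initial term, and the same estimate of the quadratic term by the mixed derivative of $(x,y)\mapsto\nabla\S_{t-s}g_k(x)\cdot F(x,y)$ times $\norme{\nu_s^N}{\ast}^2$, controlled via Lemma~\ref{lem:hessian}. It then checks the hypotheses of Lemma~\ref{lem:stochastic-convolution} in the same way (with $\rho=1/2$, via \eqref{eq:terminal-increment}) and takes a union bound over the $m$ test functions.

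The only difference is cosmetic and in the quadratic term. You integrate out $y$ first and pair the resulting function $\psi_s$ with $\nu_s^N$. The paper instead pairs in $x$ first and shows the result is Lipschitz in $y$.
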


\begin{proof}
Fix $0\leq t\leq T$ and $1\leq k\leq m$.

For the initial term, since $\nu_0^N$ has zero total mass, the definition of the dual norm, assumption \ref{hyp:H3}, and the inequality $\norme{\nabla g_k}{\infty}\leq1$ give
\[\vabs{\ps{\S_tg_k,\nu_0^N}}\leq\norme{\nabla \S_tg_k}{\infty}\norme{\nu_0^N}{\ast}\leq C_Se^{-\lambda_St}\norme{\nu_0^N}{\ast}\leq C_S\norme{\nu_0^N}{\ast}.\]

For the quadratic term, for $r>0$, set $G_r(x,y):=\nabla \S_rg_k(x)\cdot F(x,y)$. Then
\[\int_0^t\iint_{\RR^d\times\RR^d}\nabla \S_{t-s}g_k(x)\cdot F(x,y)\,\nu_s^N(\d x)\nu_s^N(\d y)\,\d s = \int_0^t \iint_{\RR^d\times\RR^d} G_{t-s}(x,y)\,\nu_s^N(\d x)\nu_s^N(\d y)\,\d s.\]
We first apply the definition of $\norme{\cdot}{\ast}$ to a function of the variable $x$, and then to a function of the variable $y$. Note that $\nu_s^N$ has zero total mass, so that no normalization at the origin is needed. For the first application, fix $y,y'\in\RR^d$ and consider the globally Lipschitz function $x\lms G_r(x,y)-G_r(x,y')$ of Lipschitz seminorm bounded by $\norme{\nabla_xG_r(\cdot,y)-\nabla_xG_r(\cdot,y')}{\infty}$. Hence,
\[\vabs{\ps{G_r(\cdot,y)-G_r(\cdot,y'),\nu_s^N}} \leq \norme{\nabla_xG_r(\cdot,y)-\nabla_xG_r(\cdot,y')}{\infty}\norme{\nu_s^N}{\ast}.\]
Moreover, the fundamental theorem of calculus gives, for every $x\in\RR^d$,
\begin{align*}
	\vabs{\nabla_xG_r(x,y)-\nabla_xG_r(x,y')} &=
\vabs{\int_0^1\nabla_y\nabla_xG_r\parent{x,y'+t(y-y')}\parent{y-y'}\,\d t} \\
& \leq \norme{\nabla_x\nabla_yG_r}{\infty}|y-y'|,
\end{align*}
therefore $\norme{\nabla_xG_r(\cdot,y)-\nabla_xG_r(\cdot,y')}{\infty} \leq
\norme{\nabla_x\nabla_yG_r}{\infty}|y-y'|$. Hence
\[\vabs{\ps{G_r(\cdot,y)-G_r(\cdot,y'),\nu_s^N}}\leq\norme{\nabla_x\nabla_yG_r}{\infty}\norme{\nu_s^N}{\ast}|y-y'|.\]
This shows that the function $y\lms\ps{G_r(\cdot,y),\nu_s^N}$ is globally Lipschitz, with Lipschitz seminorm bounded by $\norme{\nabla_x\nabla_yG_r}{\infty}\norme{\nu_s^N}{\ast}$. Applying the definition of $\norme{\cdot}{\ast}$ to this function of $y$, we obtain
\[\vabs{\iint_{\RR^d\times\RR^d}G_r(x,y)\,\nu_s^N(\d x)\nu_s^N(\d y)} = \vabs{\int_{\RR^d}\ps{G_r(\cdot,y),\nu_s^N}\,\nu_s^N(\d y)} \leq \norme{\nabla_x\nabla_yG_r}{\infty}\norme{\nu_s^N}{\ast}^2.\]
Taking $r=t-s$ gives
\begin{equation}\label{eq:lem-ineg-on-G}
\vabs{\iint_{\RR^d\times\RR^d}G_{t-s}(x,y)\,\nu_s^N(\d x)\nu_s^N(\d y)}\leq\norme{\nabla_x\nabla_yG_{t-s}}{\infty}\norme{\nu_s^N}{\ast}^2.
\end{equation}
Moreover, $\nabla_x\nabla_yG_r(x,y)=\nabla^2\S_rg_k(x)\cdot \nabla_yF(x,y)+\nabla \S_rg_k(x)\cdot \nabla_x\nabla_yF(x,y)$. Therefore, assumption \ref{hyp:H1}, followed by \eqref{eq:hessian} and \eqref{eq:linear-stability}, gives
\begin{equation}\label{eq:lem-ineg-on-G2}
\norme{\nabla_x\nabla_yG_r}{\infty} \leq \parent{\norme{\nabla^2 \S_r g_k}{\infty}+\norme{\nabla \S_r g_k}{\infty}} L_F \leq L_F \parent{C_Hr^{-1/2}e^{-\lambda_Hr}+C_Se^{-\lambda_Sr}},
\end{equation}
where we used $\norme{\nabla g_k}{\infty}\leq1$. Combining \eqref{eq:lem-ineg-on-G} and \eqref{eq:lem-ineg-on-G2}, and then integrating between $0$ and $t$, we obtain
\[\begin{aligned}
& \vabs{\int_0^t\iint_{\RR^d\times\RR^d}\nabla \S_{t-s}g_k(x)\cdot F(x,y)\,\nu_s^N(\d x)\nu_s^N(\d y)\,\d s}\\
\leq & L_F\int_0^t\parent{C_H(t-s)^{-1/2}e^{-\lambda_H(t-s)}+C_Se^{-\lambda_S(t-s)}}\norme{\nu_s^N}{\ast}^2\,\d s\\
\leq & K_0\sup_{0\leq s\leq t}\norme{\nu_s^N}{\ast}^2,
\end{aligned}\]
where the last inequality follows by taking the supremum of $\norme{\nu_s^N}{\ast}^2$ over $s\in[0,t]$, using the change of variable $r=t-s$, and extending the integral to $[0,\infty)$, since
\[L_F\int_0^\infty\parent{C_Hr^{-1/2}e^{-\lambda_Hr}+C_Se^{-\lambda_Sr}}\,\d r=L_F\parent{C_H\sqrt{\frac{\pi}{\lambda_H}}+\frac{C_S}{\lambda_S}}=K_0.\]
Combining the two estimates on the initial and the quadratic terms with the mild equation and then taking the maximum over $1\leq k\leq m$ proves \eqref{eq:main-ineq-before-exit}.

The last term is a stochastic convolution in the sense of Appendix~\ref{app:convolution}, with $H_i(t,s):=\nabla \S_{t-s}g_k(X_s^{i,N})$
\[z_t^N(g_k)=\frac1N\sum_{i=1}^N\int_0^t\nabla \S_{t-s}g_k(X_s^{i,N})\cdot\d B_s^i=\frac1N\sum_{i=1}^N\int_0^tH_i(t,s)\cdot\d B_s^i.\]
We therefore check the two bounds required by Lemma~\ref{lem:stochastic-convolution}. The first bound follows from \eqref{eq:linear-stability} and $\norme{\nabla g_k}{\infty}\leq1$
\[\frac1N\sum_{i=1}^N\int_0^t|H_i(t,s)|^2\,\d s=\frac1N\sum_{i=1}^N\int_0^t\vabs{\nabla \S_{t-s}g_k(X_s^{i,N})}^2\,\d s\leq C_S^2\int_0^te^{-2\lambda_S(t-s)}\,\d s\leq\frac{C_S^2}{2\lambda_S}=:a.\]
For the second bound, fix $0\leq u\leq t\leq u+1$. Applying \eqref{eq:linear-stability} once more and using $e^{-2\lambda_S(t-s)}\leq1$, we obtain
\[\frac1N\sum_{i=1}^N\int_u^t|H_i(t,s)|^2\,\d s=\frac1N\sum_{i=1}^N\int_u^t\vabs{\nabla \S_{t-s}g_k(X_s^{i,N})}^2\,\d s\leq C_S^2\int_u^te^{-2\lambda_S(t-s)}\,\d s\leq C_S^2(t-u).\]
Moreover, using \eqref{eq:terminal-increment} with $r=u-s$ and $h=t-u$, we obtain
\[\begin{aligned}\frac1N\sum_{i=1}^N\int_0^u|H_i(t,s)-H_i(u,s)|^2\,\d s&=\frac1N\sum_{i=1}^N\int_0^u\vabs{\nabla \S_{t-s}g_k(X_s^{i,N})-\nabla \S_{u-s}g_k(X_s^{i,N})}^2\,\d s\\&\leq C_S^2C_{g_k}^2|t-u|^2\int_0^ue^{-2\lambda_S(u-s)}\,\d s\\&\leq\frac{C_S^2C_{g_k}^2}{2\lambda_S}|t-u|^2.\end{aligned}\]
Since $|t-u|\leq1$, we have $|t-u|^2\leq|t-u|$. Adding the preceding estimates therefore gives
\[\frac1N\sum_{i=1}^N\parent{\int_0^u|H_i(t,s)-H_i(u,s)|^2\,\d s+\int_u^t|H_i(t,s)|^2\,\d s}\leq C_S^2\parent{1+\frac{C_{g_k}^2}{2\lambda_S}}|t-u|.\]
Thus, \eqref{eq:increment-kernel-condition} holds with $\rho=1/2$ and $b:=\max_{1\leq k\leq m}C_S^2\parent{1+\frac{C_{g_k}^2}{2\lambda_S}}$. Applying Lemma~\ref{lem:stochastic-convolution} to $z^N(g_k)$, we obtain
\[\PP\parent{\sup_{0\leq t\leq T}\vabs{z_t^N(g_k)}>x}\leq C_{1/2}(1+T)\exp\parent{-c_{1/2}\frac{Nx^2}{a+b}}.\]
Hence, it remains to set $C_m^2:=a+b$ and take a union bound over $1\leq k\leq m$ to prove \eqref{eq:mild-noise-max}.
\end{proof}

\subsection{Proof of Theorem~\ref{thm:main} and Corollary \ref{cor:mean-exit-time-lower-bound}}

To prove Theorem~\ref{thm:main}, it remains to relate the event $\event{\sup_{0\leq t\leq T}\norme{\nu_t^N}{\ast}\geq a}$ to the preceding estimates. We first split its probability according to whether $\EEE_T$ occurs:
\[\begin{aligned}
\PP\parent{\sup_{0\leq t\leq T}\norme{\nu_t^N}{\ast}\geq a}&=\,\PP\parent{\event{\sup_{0\leq t\leq T}\norme{\nu_t^N}{\ast}\geq a}\cap\EEE_T^c}+\PP\parent{\event{\sup_{0\leq t\leq T}\norme{\nu_t^N}{\ast}\geq a}\cap\EEE_T}\\
	&\leq\,\PP(\EEE_T^c)+\PP\parent{\event{\sup_{0\leq t\leq T}\norme{\nu_t^N}{\ast}\geq a}\cap\EEE_T}.
\end{aligned}\]
The first term has already been estimated in \eqref{eq:moment}. To control the second one, we show that, on $\EEE_T$, reaching the distance $a$ requires at least one of the stochastic convolutions to become sufficiently large. Indeed, combining the finite-dimensional reduction with the estimates of the terms in the mild equation, and taking the supremum over $[0,T]$, gives, on $\EEE_T$,
\begin{equation}\label{eq:main-ineq-before-exit2}
\begin{aligned}
\sup_{0\leq t\leq T}\norme{\nu_t^N}{\ast}&\leq\sup_{0\leq t\leq T}\max_{1\leq k\leq m}\vabs{\ps{g_k,\nu_t^N}}+4\delta\\
&\leq C_S\norme{\nu_0^N}{\ast}+K_0\sup_{0\leq t\leq T}\norme{\nu_t^N}{\ast}^2+\sup_{0\leq t\leq T}\max_{1\leq k\leq m}\vabs{z_t^N(g_k)}+4\delta.
\end{aligned}
\end{equation}

\begin{proof}[Proof of Theorem~\ref{thm:main}]
We first choose $a_0>0$ small enough that $K_0a_0\leq1/2$. Fix $a\in(0,a_0]$ and $T\geq0$. We take $\delta:=a/32$ and $K:=C_M/\delta$, and denote by $\GGG_a:=\{g_1,\ldots,g_m\}$ the corresponding finite family of test functions.

Using the initial bound \eqref{eq:initial-conditions}, we shall prove the following inclusion, almost surely:
\[\event{\sup_{0\leq t\leq T}\norme{\nu_t^N}{\ast}\geq a}\cap\EEE_T\,\subset\,\event{\sup_{0\leq t\leq T}\max_{1\leq k\leq m}\vabs{z_t^N(g_k)}>\frac a8}.\]
This will allow us to apply \eqref{eq:mild-noise-max}. We work directly on the event on the left-hand side. For $0\leq t\leq T$, set
\[f(t):=\sup_{0\leq s\leq t}\norme{\nu_s^N}{\ast}.\]
Since $\EEE_T\subset\EEE_t$ for $t\leq T$, we may apply \eqref{eq:main-ineq-before-exit2} with horizon $t$. Using the initial bound $C_S\norme{\nu_0^N}{\ast}\leq a/8$ together with our choice $4\delta=a/8$ gives
\[f(t)\leq\frac a4+K_0f(t)^2+\sup_{0\leq s\leq t}\max_{1\leq k\leq m}\vabs{z_s^N(g_k)},\qquad 0\leq t\leq T.\]
We now use a continuity argument similar to that underlying \cite[Lemma~B.4]{Cop22}. Since the particle trajectories are continuous, $f$ is continuous. Moreover, $f(0)\leq a/(8C_S)<a$, while $f(T)\geq a$ by the definition of the event considered above. The intermediate value theorem therefore gives some $t_\ast\in[0,T]$ such that $f(t_\ast)=a$. Applying the preceding estimate at $t=t_\ast$, and using $K_0a\leq1/2$ and $t_\ast\leq T$, yields
\[a\leq\frac a4+K_0a^2+\sup_{0\leq s\leq t_\ast}\max_{1\leq k\leq m}\vabs{z_s^N(g_k)}\leq\frac{3a}{4}+\sup_{0\leq s\leq T}\max_{1\leq k\leq m}\vabs{z_s^N(g_k)}.\]
Consequently,
\[\sup_{0\leq t\leq T}\max_{1\leq k\leq m}\vabs{z_t^N(g_k)}\geq\frac a4>\frac a8,\]
which proves the desired inclusion.

It remains to apply the probability estimates established above. Combining this inclusion with \eqref{eq:moment} and \eqref{eq:mild-noise-max}, the latter with $x=a/8$, gives
\[\begin{aligned}\PP\parent{\sup_{0\leq t\leq T}\norme{\nu_t^N}{\ast}\geq a}&\leq\PP(\EEE_T^c)+\PP\parent{\sup_{0\leq t\leq T}\max_{1\leq k\leq m}\vabs{z_t^N(g_k)}>\frac a8}\\&\leq C(1+T)e^{-cN}+C_{1/2}m(1+T)\exp\parent{-\frac{c_{1/2}a^2}{64C_m^2}N}.\end{aligned}\]
Setting $C_a:=C+C_{1/2}m$ and $c_a:=\min\left\{c,\frac{c_{1/2}a^2}{64C_m^2}\right\}>0$, we conclude that
\[\PP\parent{\sup_{0\leq t\leq T}\norme{\nu_t^N}{\ast}\geq a}\leq C_a(1+T)e^{-c_aN}.\]
This proves \eqref{eq:main}.
\end{proof}

\begin{proof}[Proof of Corollary~\ref{cor:mean-exit-time-lower-bound}]
By Markov's inequality and Theorem~\ref{thm:main}, for every $T>0$,
\[\EE(\tau_a^N)\geq T\PP(\tau_a^N>T)\geq T\parent{1-C_a(1+T)e^{-c_aN}}.\]
For sufficiently large $N$, choose $T_N>0$ such that $C_a(1+T_N)e^{-c_aN}=1/2$, namely
\[T_N=\frac{e^{c_aN}}{2C_a}-1.\]
Consequently, for $N$ sufficiently large that $e^{c_aN}\geq4C_a$,
\[\EE(\tau_a^N)\geq\frac{T_N}{2}=\frac{e^{c_aN}}{4C_a}-\frac12\geq\frac{e^{c_aN}}{8C_a}.\]
\end{proof}

\section{Application to an interacting Ornstein--Uhlenbeck model}\label{sec:ou}

We now apply Theorem~\ref{thm:main} to the Ornstein--Uhlenbeck--cosine model considered by Cormier \cite[Section~2.2]{Cor25}, with Brownian coefficient $1$ instead of $\sqrt{2}$. We first recall its invariant laws and their stability criterion, and then deduce confinement near each stable equilibrium.

Consider the one-dimensional particle system
\begin{equation}\label{eq:OU-particles}
\d X_t^{i,N}=\parent{-X_t^{i,N}+\frac{J}{N}\sum_{j=1}^N\cos(X_t^{j,N})}\d t+\d B_t^i,\qquad 1\leq i\leq N,\qquad J\geq0.
\end{equation}
This is the model \eqref{eq:spl} with $b(x)=-x$ and $F(x,y)=J\cos y$. The limiting McKean--Vlasov equation is
\begin{equation}\label{eq:OU-cosine-MV}
\d X_t=\parent{-X_t+J\EE[\cos(X_t)]}\d t+\d B_t.
\end{equation}

\paragraph{Invariant laws}
For a constant interaction field $\alpha\in\RR$, the diffusion \eqref{eq:mvll} is an Ornstein--Uhlenbeck process whose unique invariant law is $\nu_\infty^\alpha=\NNN(\alpha,1/2)$. Since
\[\int_\RR\cos x\,\NNN(\alpha,1/2)(\d x)=e^{-1/4}\cos\alpha,\]
the characterization \eqref{eq:general-fixed-point} shows that the invariant laws of \eqref{eq:OU-cosine-MV} are precisely the measures $\NNN(\alpha,1/2)$ for which
\begin{equation}\label{eq:OU-fixed-point}
\alpha=Je^{-1/4}\cos\alpha.
\end{equation}

\paragraph{Stability}
Fix a solution $\alpha$ of \eqref{eq:OU-fixed-point}. Applying the stability criterion of \cite[Theorem~2.5 and Section~2.2]{Cor25} with our Brownian coefficient, including the necessity discussed there, shows that $\nu_\infty^\alpha$ is locally exponentially stable in $\Wasserstein_1$ if and only if
\begin{equation}\label{eq:OU-gradient-stability}
\lambda_\alpha:=1+Je^{-1/4}\sin\alpha=1+\alpha\tan\alpha>0.
\end{equation}
Here, the second equality follows from \eqref{eq:OU-fixed-point}, whose solutions satisfy $\cos\alpha\neq0$. The representation of the linearized semigroup $(\S_t)_{t\geq0}$ simplifies to
\begin{equation}\label{eq:S-Omega-representation-OU-cos}
\S_tg(y)=
\begin{cases}
\displaystyle \P_t^\alpha\phi(y)+\int_0^t\P_s^\alpha\phi(y)\,\Omega_{t-s}(\mathbf{1})\,\d s,
& \text{if }g=\phi,\\[1em]
\displaystyle \P_t^\alpha g(y)+J\int_0^t\Theta_{t-s}^g(\mathbf{1})\,\S_s\phi(y)\,\d s,
& \text{otherwise},
\end{cases}
\end{equation}
where $\phi(y):=\cos y$, $\mathbf{1}$ denotes the constant function equal to $1$, and $\Omega_t(\mathbf{1})$ is identified with its constant value.

\begin{proof}[Proof of Corollary~\ref{cor:OU-confinement}]
The coefficient assumption \ref{hyp:H1} holds for $b(x)=-x$ and $F(x,y)=J\cos y$. Assumption \ref{hyp:H2} holds because \eqref{eq:OU-fixed-point} ensures that the Gaussian law $\nu_\infty^\alpha=\NNN(\alpha,1/2)$ is invariant. Assumption \ref{hyp:H3} follows from Proposition~\ref{prop:stability-equivalence}, since condition \eqref{eq:OU-gradient-stability} implies the local exponential stability estimate \eqref{eq:non-linear-stability}. Under the initial assumptions of the corollary, Theorem~\ref{thm:main} applies and yields the stated confinement estimate.
\end{proof}

\section{Extension to a double-well model}\label{sec:double-well}

We consider the double-well model with linear interaction studied in \cite{Tug14}. Although \ref{hyp:H1} fails, the conclusion of Theorem~\ref{thm:main} still holds.

\paragraph{The model and its invariant laws}
Consider the particle system
\begin{equation}\label{eq:DW-particles-intro}
\d X_t^{i,N}=\parent{-V'(X_t^{i,N})+\frac{\theta}{N}\sum_{j=1}^N\parent{X_t^{j,N}-X_t^{i,N}}}\d t+\d B_t^i,
\end{equation}
where $V(x)=x^4/4-x^2/2$ and $F(x,y)=\theta(y-x)$, with $\theta>0$. Thus $b(x)=-V'(x)=x-x^3$ is not globally Lipschitz and $F$ is unbounded, so \ref{hyp:H1} is not verified.

The limiting McKean--Vlasov equation is
\begin{equation}\label{eq:DW-MV}
\d X_t=\parent{X_t-X_t^3+\theta\parent{\EE[X_t]-X_t}}\d t+\d B_t.
\end{equation}
For $m\in\RR$, the diffusion with interaction field $\alpha_m(x):=\theta(m-x)$ has the unique invariant law
\begin{equation}\label{eq:tug-equilibrium}
\nu_m(\d x)=\frac1{Z_m}\exp\parent{-\frac{x^4}{2}+(1-\theta)x^2+2\theta mx}\,\d x.
\end{equation}
The fixed-point characterization remains valid: $\nu_m$ is invariant for \eqref{eq:DW-MV} if and only if
\begin{equation}\label{eq:DW-fixed-point}
m=\int_\RR x\,\nu_m(\d x).
\end{equation}
For such an $m$, assumption \ref{hyp:H2} holds with $\alpha=\alpha_m$ and $\nu_\infty^\alpha=\nu_m$. 

\begin{prop}[Exponentially long confinement for the double-well model]\label{prop:tug-confinement}
Let $m$ satisfy \eqref{eq:DW-fixed-point} and
\begin{equation}\label{eq:DW-stability}
r_m:=2\theta\int_\RR(x-m)^2\,\nu_m(\d x)<1.
\end{equation}
Then \ref{hyp:H3} holds at $\nu_m$, and the conclusion of Theorem~\ref{thm:main} remains valid under \eqref{eq:initial-conditions} with $\nu_\infty^\alpha=\nu_m$.
\end{prop}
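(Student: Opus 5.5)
The plan has three parts: a reformulation of the linear interaction that makes the linearized operator rank one, an explicit solution of the resulting Volterra equation, and a recheck of the proof of Theorem~\ref{thm:main} with the moment and regularity bounds redone for the cubic drift. The key observation is that the interaction $F(x,y)=\theta(y-x)$ splits as $-\theta x+\theta y$. The term $-\theta x$ is a fixed confining drift and can be absorbed into $b$. The genuine interaction is then $\theta\int y\,\mu(\d y)$, which depends only on the mean. Set $\tilde b(x):=x-x^3-\theta x$ and $\tilde F(x,y):=\theta y$. The interaction field at $\nu_m$ becomes the constant $\theta m$, the frozen generator is $\LLL g=\frac12 g''+(\tilde b+\theta m)g'$, and $\KKK g(y)=\theta y\,\ps{g',\nu_m}$. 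So the linearized operator is a rank-one perturbation of $\LLL$, exactly as in the OU-cosine case with $\phi(y)=\theta y$, and the Duhamel formula \eqref{eq:S-duhamel} reduces to a scalar Volterra equation.

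\textbf{Step 1: establish \ref{hyp:H3}.} Define $\phi(y)=y$ and $\beta_t(g):=\ps{(\S_tg)',\nu_m}$. Differentiating \eqref{eq:S-duhamel} and integrating against $\nu_m$ gives
\[\beta_t(g)=\ps{(\P_tg)',\nu_m}+\theta\int_0^t\beta_s(g)\ps{(\P_{t-s}\phi)',\nu_m}\,\d s.\]
The scalar kernel $k(t):=\theta\ps{(\P_t\phi)',\nu_m}$ can be computed by reversibility. The measure $\nu_m$ is reversible for $\LLL$ with density proportional to $e^{-2W}$, where $W$ is the effective potential. Integrating by parts gives $\ps{f',\nu_m}=\ps{f,2W'}_{\nu_m}$, with $2W'(x)=2x^3-2(1-\theta)x-2\theta m=-(\text{drift})$ times $2$. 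Since $\LLL\phi$ equals the drift, $\ps{\LLL\phi,\cdot}$ links these quantities. The upshot is $k(t)=2\theta\,\mathrm{Cov}_{\nu_m}(\P_t\phi,\phi)$. By the spectral theorem this is completely monotone, with $\int_0^\infty k=2\theta\,\ps{(-\LLL)^{-1}(\phi-m),\phi}$. That quantity must be compared with $r_m$. Alternatively one can use $k(0)=\theta$ together with the identity $\int_0^\infty k(t)\,\d t=\ldots$. I expect the clean identity to be $\int_0^\infty \ps{(\P_t\phi)',\nu_m}\,\d t=2\,\mathrm{Var}_{\nu_m}(x)$. This should follow because $\int_0^\infty \P_t\phi'\,\d t$-type integrals telescope: $\frac{\d}{\d m}$ of the mean of $\nu_m$ equals $2\theta\,\mathrm{Var}$, and linear response gives that derivative as $\int_0^\infty k$.

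Given that identity, $\|k\|_{L^1}=r_m<1$, and since $k\geq0$ decays exponentially (spectral gap of $\LLL$, inherited from the uniform convexity of $W$ at infinity), Paley--Wiener for the Volterra resolvent yields exponential decay of $\beta_t$. Plugging back into the differentiated Duhamel formula, together with \eqref{eq:P-gradient} for $\LLL$, gives \ref{hyp:H3}. An alternative route is to invoke Tugaut's nonlinear stability result and then Proposition~\ref{prop:stability-equivalence}. That proposition, however, was proved under \ref{hyp:H1}, so its proof would have to be re-checked; I prefer the direct route.

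\textbf{Step 2: rerun the proof of Theorem~\ref{thm:main}.} Three ingredients need to be checked again.
\begin{itemize}
\item \emph{Moment bound, Lemma~\ref{lem:moment}.} The cubic drift gives $2x\tilde b(x)\leq -\kappa x^2+K$ directly. The interaction term $\frac{2\theta}{N}\sum_i X^i\bar X$ is bounded by $2\theta V_t^N$, which is dominated by the quartic dissipation. The rest of the proof is unchanged.
\item \emph{Regularity of $\P_t$.} I would check \eqref{eq:P-gradient}, \eqref{eq:BEL-bound} and \eqref{eq:P-hessian} for one-sided Lipschitz drifts, with $\tilde b'\leq 1-\theta$. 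Gradient contraction follows from Eberle's coupling, since the drift is convex-dissipative at infinity. The Bismut formula only needs an upper bound on $\tilde b'$. The Hessian bound uses the representation \eqref{eq:gradient-representation}, where $\tilde b'$ is unbounded below. This is handled by a Feynman--Kac form: $u_t=\EE[g'(Y_t)e^{\int\tilde b'(Y_s)\,\d s}]$, whose weight is bounded by $e^{(1-\theta)t}$.
\item \emph{Quadratic term in the mild equation.} Here $\tilde F(x,y)=\theta y$ is unbounded in $y$, but $\nabla_x\nabla_y\tilde F=0$ and $\nabla_y\tilde F=\theta$. Hence \eqref{eq:lem-ineg-on-G} still holds with $K_0=\theta C_H\sqrt{\pi/\lambda_H}$.
\end{itemize}
Lemma~\ref{lem:terminal-time} for $g\in\DDD$ needs $\LLL g$ Lipschitz, which still holds because $g'$ has compact support.

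\textbf{Main obstacle.} The hardest step is identifying $\int_0^\infty k=r_m$, equivalently the spectral argument that $r_m<1$ forces decay of the resolvent. My first attempt is to verify it through $\partial_m\int x\,\nu_m(\d x)=2\theta\,\mathrm{Var}_{\nu_m}=r_m$, using the first-variation interpretation of Lemma~\ref{lem:Omega-S-identification}.
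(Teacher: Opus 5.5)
Your route to \ref{hyp:H3} works and differs from the paper's, though one formula in it is wrong (see below). Both arguments reduce \ref{hyp:H3} to exponential decay of the same scalar $\beta_t(g)=\ps{(\S_tg)',\nu_m}$, which is then inserted into the differentiated Duhamel formula \eqref{eq:S-duhamel} together with \eqref{eq:P-gradient}.

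\textbf{Comparison with the paper.} The paper gets the decay of $\beta_t$ from outside. It imports local nonlinear $\Wasserstein_2$-stability of $\nu_m$ from Monmarché--Reygner, whose criterion is that the mean map has derivative $r_m<1$. It then differentiates along translations $\Law(Z+\varepsilon)$ using \eqref{eq:nonlinear-first-order-expansion}, which requires re-establishing the sensitivity formula and the first-variation lemmas in $\PPP_2(\RR)$. You solve the scalar renewal equation directly, using only reversibility of the frozen diffusion and a spectral gap for $\nu_m$. This is self-contained and needs no nonlinear input. It also exhibits $r_m$ as the $L^1$ norm of a nonnegative kernel, so $r_m<1$ is visibly the linear stability criterion. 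The paper's route is shorter given the citation and reuses Section~\ref{sec:lin} unchanged. Your splitting $F=\theta y-\theta x$ is harmless: it shifts $\KKK g$ by the constant $\theta\int xg'\,\d\nu_m$. That constant is killed by $\nabla\P_{t-s}$, so both versions of $\nabla\S_tg$ solve the same equation and coincide.

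\textbf{The kernel formula.} Your formula $k(t)=2\theta\,\mathrm{Cov}_{\nu_m}(\P_t\phi,\phi)$ is wrong: it gives $k(0)=r_m$, whereas $k(0)=\theta\ps{\phi',\nu_m}=\theta$. Since $\nu_m\propto e^{-2W}$ with $W'=-\LLL\phi$, integration by parts gives $\ps{f',\nu_m}=-2\int f\,\LLL\phi\,\d\nu_m$. By symmetry of $\LLL$ in $L^2(\nu_m)$,
\[k(t)=-2\theta\int\P_t\phi\,\LLL\phi\,\d\nu_m=-2\theta\frac{\d}{\d t}\mathrm{Cov}_{\nu_m}(\P_t\phi,\phi)=\theta\int\vabs{(\P_{t/2}\phi)'}^2\,\d\nu_m .\]
Three consequences follow:
\begin{itemize}
\item $k\geq0$.
\item The spectral theorem gives $k(t)\leq\theta e^{-\lambda_1 t}$, where $\lambda_1>0$ is the spectral gap.
\item Since $\P_t\phi\to\int x\,\d\nu_m=m$ in $L^2(\nu_m)$ by \eqref{eq:DW-fixed-point}, we get $\int_0^\infty k=2\theta\,\mathrm{Var}_{\nu_m}=r_m$.
\end{itemize}
This is exactly the identity you expected, so your ``main obstacle'' is a one-line computation. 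Paley--Wiener is not needed either. Pick $\varepsilon>0$ with $\int_0^\infty e^{\varepsilon t}k(t)\,\d t<1$; the Neumann series then gives a resolvent $\rho$ with $e^{\varepsilon t}\rho\in L^1$, hence exponential decay of $\beta_t(g)$.

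\textbf{Two gaps in Step~2.}

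First, Step~1 relies on \eqref{eq:S-duhamel}. The mild equation and Lemmas~\ref{lem:terminal-time} and~\ref{lem:hessian} also use the backward equation and the semigroup property. All of these were proved under \ref{hyp:H1}. Your rank-one structure lets you bypass the Volterra machinery:
\begin{itemize}
\item Define $\beta_t(g)$ by the scalar Volterra equation.
\item Set $\S_tg:=\P_tg+\theta\int_0^t\beta_s(g)\P_{t-s}\phi\,\d s$.
\item Check directly that $\ps{(\S_tg)',\nu_m}=\beta_t(g)$ and that $\partial_t\S_tg=\AAA\S_tg$.
\item Get $\S_{t+r}g=\S_t\S_rg$ from uniqueness for the scalar equation.
\end{itemize}

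Second, bounding the Feynman--Kac weight by $e^{(1-\theta)t}$ controls $(\P_tg)'$ but not the Hessian. The Bismut formula for the Feynman--Kac semigroup contains $\int_0^tB''(Y_s)\,\partial_xY_s\,\d s$, with $B:=\tilde b+\theta m$ and $|B''(x)|=6|x|$ unbounded. What closes \eqref{eq:P-hessian} is that the weight also carries the factor $e^{-3\int_0^tY_s^2\,\d s}$, which absorbs this growth. This is the mechanism behind the paper's bound $\nabla B+|\nabla^2B|\leq\ell+3$.
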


\begin{proof}
Assumption \ref{hyp:H2} follows from \eqref{eq:DW-fixed-point} and the characterization of invariant laws given above. Since \ref{hyp:H1} is not satisfied, three parts of the proof must be adapted: the proof of \eqref{eq:P-hessian}, which uses the boundedness of $\nabla(b+\alpha)$, and the Volterra construction and the moment estimate \eqref{eq:collective-lyapunov}, which use the boundedness of $F$. We will also verify \ref{hyp:H3} under \eqref{eq:DW-stability}, using \cite{MR26}.

These adaptations suffice to recover the confinement proof. The Volterra construction and first-variation arguments provide the semigroup and Duhamel properties of Proposition~\ref{prop:sgfpl}, together with the identification needed to verify \ref{hyp:H3}. Once \ref{hyp:H3} is established, the diffusion estimates and \eqref{eq:K-properties} allow us to apply Lemmas~\ref{lem:terminal-time} and~\ref{lem:hessian} with their previous proofs; in particular, $\LLL_\alpha g$ and its gradient are bounded for $g\in\DDD$, since $\nabla g$ has compact support. The collective bound \eqref{eq:collective-lyapunov} gives \eqref{eq:V-ineq}, hence Lemma~\ref{lem:moment} and the moment control required for the finite-dimensional reduction. The Itô identities will be justified by localization. Finally, \eqref{eq:lem-ineg-on-G}--\eqref{eq:lem-ineg-on-G2} only use the bounded derivatives $\nabla_yF$ and $\nabla_x\nabla_yF$, so Lemma~\ref{lem:mild-estimates} and the final continuity argument will apply unchanged.

\paragraph{Preliminary estimates \eqref{eq:Minfty}--\eqref{eq:sensitivity-formula}}
The moment condition \eqref{eq:Minfty} is verified directly from the explicit density \eqref{eq:tug-equilibrium}.

The gradient estimate \eqref{eq:P-gradient} is obtained from \cite[Corollary~2]{Ebe16}, since the frozen drift $B:=b+\alpha$ is dissipative at sufficiently large distances. The required non-explosion and finite-time bounds on the stopped second moments of the diffusion \eqref{eq:mvll} are ensured by the Lyapunov estimate \eqref{eq:frozen-Lyapunov}, which holds with $\kappa=1$ and $C=C_m$.

The regularization estimates \eqref{eq:BEL-bound} and \eqref{eq:P-hessian} are recovered from \cite[Theorem~2.1 and the proof of Theorem~2.3]{EL94} by localization and approximation. Indeed, with $\ell:=\max\{1-\theta,0\}$, the inequalities $\nabla B\leq\ell$ and $\nabla B+|\nabla^2B|\leq\ell+3$ imply, through the variation equations, the required bounds on the first and second spatial variations, uniform in the initial point over finite time intervals.

The Wasserstein estimate \eqref{eq:controle-mf} is obtained by synchronous coupling, with $C_T=e^T$.

For initial laws in $\PPP_2(\RR)$, the sensitivity formula \eqref{eq:sensitivity-formula} is recovered by the same Itô argument. After cancellation of the drift through the backward equation, taking expectations is justified by \eqref{eq:P-gradient} and \eqref{eq:controle-mf}, since $\Phi(\mu_s)-\alpha$ is constant in space.

\paragraph{The Volterra construction and first variation}
The formulas in \eqref{eq:S-Omega-representation} remain well defined. Indeed, the linear growth of $F$, the finite-time moment bounds for the frozen diffusion, and the second moment of $\nu_m$, together with the finite-time bounds on $\Omega$, give $\alpha\in\H$ and
\[\norme{p_t^y}{2}+\norme{q_t^y}{2}\leq C_T(1+|y|),\qquad 0\leq t\leq T.\]
The integrability of the defining formulas is then ensured by the finite-time bounds on $\Theta^g$. The preservation of $\XXX$ is obtained by the same differentiation argument, using \eqref{eq:P-gradient} and \eqref{eq:P-hessian}.

With the preliminary estimates established above, the proofs of Lemmas~\ref{lem:interaction-expansion} and~\ref{lem:Omega-S-identification} and Proposition~\ref{prop:sgfpl} apply for initial laws in $\PPP_2(\RR)$.

\paragraph{The collective bound \eqref{eq:collective-lyapunov}}
The collective bound \eqref{eq:collective-lyapunov} is obtained with $\kappa_V=1$ and $K_V=17/8$, since $2xb(x)+1\leq-x^2+17/8$ and
\[\frac2{N^2}\sum_{i,j=1}^Nx_iF(x_i,x_j)=-\frac{\theta}{N^2}\sum_{i,j=1}^N(x_i-x_j)^2\leq0.\]
The inequality \eqref{eq:V-ineq} is then obtained by applying this bound to the
drift in Itô's formula for $V_t^N$, with the same martingale bracket. The mild
equation~\eqref{eq:me} is recovered by the same Itô argument, using the backward equation established in Proposition~\ref{prop:sgfpl}.

\paragraph{Verification of \ref{hyp:H3} and end of the proof}
For $\mu\in\PPP_2(\RR)$ sufficiently close to $\nu_m$ in $\Wasserstein_2$, the following estimate is obtained from \cite[Theorem~8, estimate~(36), and Section~3.2.1]{MR26}:
\[\Wasserstein_2(\TTT_t(\mu),\nu_m)\leq C_Me^{-\lambda_Mt}\Wasserstein_2(\mu,\nu_m),\qquad t\geq0.\]
Indeed, the mean map $a\mapsto\int_\RR x\,\nu_a(\d x)$ has derivative $r_m<1$ at $a=m$ by \eqref{eq:DW-stability}. This condition is satisfied by both non-symmetric equilibria whenever they exist, as shown in \cite[Section~3.2.1]{MR26}. In their normalization, the interaction parameter is $\theta/2$ and the temperature is $\sigma^2=1/2$. 
The averaged derivative $\nu_m(\nabla\S_tg)$ is then controlled by translating the initial equilibrium. Let $Z$ have law $\nu_m$ and set $\mu_\varepsilon:=\Law(Z+\varepsilon)$. By \eqref{eq:nonlinear-first-order-expansion}, $\nu_m(\nabla\S_tg)$ is the derivative at $\varepsilon=0$ of $\TTT_t(\mu_\varepsilon)(g)$. Since $\Wasserstein_2(\mu_\varepsilon,\nu_m)=|\varepsilon|$, the bound
\[\vabs{\nu_m(\nabla\S_tg)}\leq C_Me^{-\lambda_Mt}\norme{\nabla g}{\infty}\]
is obtained by applying the preceding convergence estimate, dividing by $|\varepsilon|$, and letting $\varepsilon\to0$. 
Since $\nabla\KKK h=\theta\nu_m(\nabla h)$, the following bound is obtained by inserting the preceding estimate into the Duhamel formula \eqref{eq:S-duhamel} and applying \eqref{eq:P-gradient}:
\[\norme{\nabla\S_tg}{\infty}\leq C_P\parent{e^{-\lambda_Pt}+\theta C_M\int_0^t e^{-\lambda_P(t-s)}e^{-\lambda_Ms}\,\d s}\norme{\nabla g}{\infty}.\]
Assumption \ref{hyp:H3} is therefore verified for every $0<\lambda_S<\min\{\lambda_P,\lambda_M\}$.

The stated confinement estimate is therefore obtained by the same argument as in Section~\ref{sec:proof}, using the estimates established above.
\end{proof}

\begin{appendix}
\section{Estimates on stochastic convolutions}\label{app:convolution}

This appendix establishes the stochastic convolution estimate used twice in the proof of Theorem~\ref{thm:main}. The processes considered have the form
\[\frac1N\sum_{j=1}^N\int_0^tH_j(t,s)\cdot\d B_s^j.\]
Since the kernels depend on the terminal time $t$, these processes are not, in general, martingales. One therefore cannot directly apply a martingale maximal inequality to their supremum over $t$. However, for each fixed $t$, the process
\[r\lms\frac1N\sum_{j=1}^N\int_0^rH_j(t,s)\cdot\d B_s^j,\qquad 0\leq r\leq t,\]
is a martingale under the integrability assumptions below. We combine this observation with a control of the increments in the terminal time on dyadic grids to obtain the required maximal estimate.

\begin{lem}[Exponential bound for stochastic convolutions]\label{lem:stochastic-convolution}
Let $B^1,\ldots,B^N$ be independent $d$-dimensional Brownian motions. For every $1\leq j\leq N$, let $H_j(t,s)$ be an $\RR^d$-valued random kernel, jointly measurable on $\{0\leq s\leq t\}$ and predictable in $s$ for every fixed $t$. Consider
\[R_t^N:=\frac1N\sum_{j=1}^N\int_0^tH_j(t,s)\cdot\d B_s^j.\]
Assume that there exist deterministic constants $a,b>0$ and $\rho\in(0,1]$ such that, almost surely, for every $t\geq0$,
\begin{equation}\label{eq:fixed-kernel-condition}
\frac1N\sum_{j=1}^N\int_0^t|H_j(t,s)|^2\,\d s\leq a,
\end{equation}
and, whenever $0\leq u\leq t\leq u+1$,
\begin{equation}\label{eq:increment-kernel-condition}
\frac1N\sum_{j=1}^N\left[\int_0^u|H_j(t,s)-H_j(u,s)|^2\,\d s+\int_u^t|H_j(t,s)|^2\,\d s\right]\leq b|t-u|^{2\rho}.
\end{equation}
Then $R^N$ admits a continuous modification for which the following estimate holds: there exist constants $C_\rho,c_\rho>0$, depending only on $\rho$, such that, for every $T,x>0$,
\begin{equation}\label{eq:convolution-maximal}
\PP\parent{\sup_{0\leq t\leq T}|R_t^N|>x}\leq C_\rho(1+T)\exp\parent{-c_\rho\frac{Nx^2}{a+b}}.
\end{equation}
\end{lem}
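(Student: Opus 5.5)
We will prove \eqref{eq:convolution-maximal} by a chaining argument on unit time intervals: Gaussian-type tail bounds at fixed times and for increments, Kolmogorov--Garsia control inside each interval, and a union bound over the $\lceil T\rceil$ intervals, which produces the factor $1+T$.

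\emph{Step 1: sub-Gaussian tails for fixed times and increments.} Fix $t$. By the predictability assumption, $r\mapsto M_r^{(t)}:=\frac1N\sum_j\int_0^rH_j(t,s)\cdot\d B_s^j$ is a continuous martingale on $[0,t]$. Its bracket is $\frac1{N^2}\sum_j\int_0^r|H_j(t,s)|^2\d s\leq a/N$ by \eqref{eq:fixed-kernel-condition}. Since the bracket bound is deterministic, the exponential supermartingale $\exp(\theta M^{(t)}-\frac{\theta^2}{2}\langle M^{(t)}\rangle)$ gives $\EE e^{\theta R_t^N}\leq e^{\theta^2a/(2N)}$, hence $\PP(|R_t^N|>x)\leq2e^{-Nx^2/(2a)}$. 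For $u\leq t\leq u+1$ we write
\[R_t^N-R_u^N=\frac1N\sum_j\parent{\int_0^u(H_j(t,s)-H_j(u,s))\cdot\d B_s^j+\int_u^tH_j(t,s)\cdot\d B_s^j}.\]
This is the terminal value of a single martingale, namely the stochastic integral over $[0,t]$ of the predictable integrand $(H_j(t,s)-H_j(u,s))\1_{s\leq u}+H_j(t,s)\1_{u<s\leq t}$. By \eqref{eq:increment-kernel-condition} its bracket is at most $b|t-u|^{2\rho}/N$. The same argument therefore gives $\EE\exp(\theta(R_t^N-R_u^N))\leq\exp(\theta^2b|t-u|^{2\rho}/(2N))$.

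\emph{Step 2: continuity and the maximal bound on $[k,k+1]$.} The increment bound gives all moments $\EE|R_t^N-R_u^N|^p\leq C_p(b/N)^{p/2}|t-u|^{\rho p}$. Kolmogorov's criterion with $p>1/\rho$ then yields a continuous modification. For the exponential maximal estimate we chain over dyadic points $D_n=\{k+i2^{-n}\}$ of $[k,k+1]$. Any point of $[k,k+1]$ is reached from $k$ along a path using at most one step of size $2^{-n}$ per level $n$. We allocate the threshold as $x_n:=c'x\,2^{-n\rho/2}$, normalized so that $\sum_n x_n\leq x/2$, and reserve $x/2$ for $|R_k^N|$. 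A union bound gives
\[\PP\parent{\sup_{[k,k+1]}|R^N-R_k^N|>x/2}\leq\sum_{n\geq1}2^{n+1}\exp\parent{-\frac{Nx_n^2}{2b\,2^{-2n\rho}}}=\sum_n2^{n+1}\exp\parent{-c''\frac{Nx^2}{b}2^{n\rho}}.\]
This sum is bounded by $C_\rho e^{-c_\rho Nx^2/b}$ when $Nx^2/b\geq1$. When $Nx^2/b<1$ the estimate is trivial after enlarging $C_\rho$, since then $C_\rho e^{-c_\rho Nx^2/(a+b)}\geq1$. Together with the fixed-time bound for $R_k^N$, this gives $\PP(\sup_{[k,k+1]}|R^N|>x)\leq C_\rho e^{-c_\rho Nx^2/(a+b)}$.

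\emph{Step 3: union over intervals.} Summing over $k=0,\ldots,\lfloor T\rfloor$ gives \eqref{eq:convolution-maximal} with the factor $1+T$.

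The main obstacle is Step 2. The chaining must produce a constant independent of $N$ and must keep the exponent of order $Nx^2/(a+b)$. This requires the geometric weights $2^{-n\rho/2}$, which beat the entropy factor $2^n$ while leaving a uniformly summable series, and the separate treatment of the small regime $Nx^2/b<1$. One must also check that the dyadic supremum equals the true supremum, which follows from the continuous modification.
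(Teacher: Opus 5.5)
Your proposal is correct and follows essentially the same route as the paper. Both arguments obtain sub-Gaussian tails for $R_t^N$ and for $R_t^N-R_u^N$ from the martingale obtained by freezing the terminal time, use Kolmogorov's criterion for a continuous modification, chain dyadically on each $[k,k+1]$ with thresholds proportional to $2^{-n\rho/2}$ (treating small $Nx^2/b$ separately), and take a union bound over at most $1+T$ unit intervals; the endpoint $t=k+1$, which steps of size $2^{-n}$ with $n\geq1$ do not reach, is covered by continuity as in the paper.
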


We first establish the maximal estimate on intervals of length one, and then use a union bound to cover $[0,T]$. On each interval, we combine a fixed-time estimate with a bound on the supremum of the increments, obtained here using dyadic grids. The latter bound can also be derived from two forms of the Garsia--Rodemich--Rumsey inequality: either its power-moment form \cite[Corollary~A.2]{FV10}, retaining the $\sqrt p$ growth of the $L^p$ bounds and summing the exponential series, or its general form with $\Psi(u)=e^{u^2}-1$ \cite[Lemma~1.1]{HL13}.

\begin{proof}
We prove the estimate on each interval $I_k:=[k,k+1]$, for integers $k\geq0$, and then sum the resulting bounds.

We first establish the fixed-time estimate. Fix $t\geq0$. Freezing the terminal time in the kernel, the process
\[r\in[0,t]\lms M_r^{N,t}:=\frac1N\sum_{j=1}^N\int_0^rH_j(t,s)\cdot\d B_s^j\]
is a continuous square-integrable martingale, with $M_t^{N,t}=R_t^N$. By independence of the Brownian motions and \eqref{eq:fixed-kernel-condition},
\[\langle M^{N,t}\rangle_t=\frac1{N^2}\sum_{j=1}^N\int_0^t|H_j(t,s)|^2\,\d s\leq\frac{a}{N}.\]
Its exponential supermartingale gives $\EE(e^{\theta M_r^{N, t}}) \leq e^{\theta^2 \frac{a}{2N}}$. Chernoff's inequality, applied to $M^{N,t}$ and $-M^{N,t}$, therefore gives
\begin{equation}\label{eq:fixed-tail}
\PP\parent{|R_t^N|>x} = \PP\parent{|M_t^{N,t}| > x} \leq2\exp\parent{-\frac{Nx^2}{2a}},\qquad x>0.
\end{equation}
In particular, for every \(k\geq0\), 
\begin{equation}\label{eq:fixed-tail2}
\PP\parent{|R_k^N|>x}\leq2\exp\parent{-\frac{N}{a}\frac{x^2}{2}}.
\end{equation}

We next estimate the increments in the terminal time. Let $0\leq u<t\leq u+1$. Splitting the stochastic integral at time $u$ gives
\[R_t^N-R_u^N=\frac1N\sum_{j=1}^N\int_0^u\parent{H_j(t,s)-H_j(u,s)}\cdot\d B_s^j+\frac1N\sum_{j=1}^N\int_u^tH_j(t,s)\cdot\d B_s^j.\]
For fixed $u,t$, this is again the terminal value of a continuous martingale. By \eqref{eq:increment-kernel-condition}, its quadratic variation is bounded by $b|t-u|^{2\rho}/N$. Consequently,
\begin{equation}\label{eq:increment-tail}
\PP\parent{|R_t^N-R_u^N|>x}\leq2\exp\parent{-\frac{Nx^2}{2b|t-u|^{2\rho}}},\qquad x>0.
\end{equation}

For the rest of the proof, we need a continuous modification of $R^N$. We therefore apply Kolmogorov's continuity criterion on each interval $I_k$. To this end, integrating \eqref{eq:increment-tail} yields, for every $p\geq1$ and $u,t\in I_k$,
\[\EE|R_t^N-R_u^N|^p=p\int_0^\infty x^{p-1}\PP\parent{|R_t^N-R_u^N|>x}\,\d x\leq C_p\parent{\frac bN}^{p/2}|t-u|^{\rho p}.\]
Choosing $p$ such that $\rho p>1$, the exponent of $|t-u|$ is strictly greater than $1$, so the criterion gives a continuous modification on each $I_k$. These modifications agree almost surely at their common endpoints. Since there are only countably many such endpoints, they yield a continuous modification on $[0,\infty)$. We work with this modification throughout the rest of the proof.

Fix $k\geq0$. Recall that we want to control $A_k:=\sup_{t\in I_k}|R_t^N|$. To do so, we observe that
\[A_k\leq|R_k^N|+B_k,\qquad\text{where}\quad B_k:=\sup_{t\in I_k}|R_t^N-R_k^N|.\]
Since the tail probability of $|R_k^N|$ is already controlled by \eqref{eq:fixed-tail2}, it remains to estimate $B_k$. We will then combine these two estimates using
\[\PP\parent{\sup_{t\in I_k}|R_t^N|>x}=\PP(A_k>x)\leq\PP\parent{|R_k^N|>\frac x2}+\PP\parent{B_k>\frac x2}.\]

To pass from the increment estimate at two fixed times \eqref{eq:increment-tail} to a supremum over $I_k$, we approximate every time by points on dyadic grids. For every $n\geq0$, divide $I_k$ into $2^n$ intervals of length $2^{-n}$ and define
\[p_n(t):=k+2^{-n}\left\lfloor2^n(t-k)\right\rfloor,\qquad t\in[k,k+1).\]
Then $p_0(t)=k$, $p_n(t)\to t$ as $n\to\infty$, and $p_n(t)-p_{n-1}(t)$ is either $0$ or $2^{-n}$ for $n\geq1$. Thus, each successive approximation adds at most one increment from the corresponding grid. The telescoping identity gives
\[R_{p_n(t)}^N-R_k^N=\sum_{i=1}^{n}\parent{R_{p_i(t)}^N-R_{p_{i-1}(t)}^N}.\]
By continuity, the left-hand side converges to $R_t^N-R_k^N$ as $n\to\infty$. On the right-hand side, we bound each term indexed by $i$ in absolute value by the largest increment on this grid denoted by
\[\Delta_{k,n}:=\max_{0\leq j<2^n}\vabs{R_{k+(j+1)2^{-n}}^N-R_{k+j2^{-n}}^N},\qquad n\geq1.\]
Taking absolute values and letting $n\to\infty$ therefore gives
\[|R_t^N-R_k^N|\leq\sum_{n\geq1}\Delta_{k,n}.\]
The same inequality holds at $t=k+1$ by continuity. Taking the supremum over $I_k$ yields
\[B_k\leq\sum_{n\geq1}\Delta_{k,n}.\]
We now distribute the threshold $x$ between the different scales. Recall that, for two real numbers $a$ and $b$, if $a+b>x$, then either $a>x/2$ or $b>x/2$. More generally, for any sequence of positive weights $(w_n)_{n\geq1}$ satisfying $\sum_{n\geq1}w_n=1$, we have
\[\sum_{n\geq1}\Delta_{k,n}>x\quad\Longrightarrow\quad\exists n\geq1,\quad\Delta_{k,n}>w_nx.\]
Indeed, if every term were bounded by $w_nx$, their sum would be at most $x$. Consequently, a union bound gives
\[\PP(B_k>x)\leq\sum_{n\geq1}\PP(\Delta_{k,n}>w_nx).\]
At level $n$, there are $2^n$ increments, each over an interval of length $2^{-n}$. Applying \eqref{eq:increment-tail} to each increment and taking another union bound gives
\[\PP(\Delta_{k,n}>xw_n)\leq\sum_{j=0}^{2^n-1}\PP\parent{\vabs{R_{k+(j+1)2^{-n}}^N-R_{k+j2^{-n}}^N}>xw_n}\leq2^{n+1}\exp\parent{-\frac{Nx^2w_n^2}{2b\,2^{-2n\rho}}}.\]
We choose positive weights $w_n$ so that $\sum_{n\geq1}w_n=1$ and $\frac{1}{2} w_n^2\,2^{2n\rho} = \kappa_\rho\,2^{n\rho}$ for some constant $\kappa_\rho > 0$. These conditions give $w_n:=(2^{\rho/2}-1)2^{-n\rho/2}$, for $n\geq1$ and $\kappa_\rho:=(2^{\rho/2}-1)^2/2$. Thus,
\[\PP(B_k>x)\leq\sum_{n\geq1}2^{n+1}\exp\parent{-\kappa_\rho\frac{Nx^2}{b}2^{n\rho}}.\]
It remains to bound this series. For every $y\geq1$, we have $y2^{n\rho}\geq y/2+2^{n\rho}/2$, and therefore
\begin{equation}\label{eq:dyadic-series}
\sum_{n\geq1}2^{n+1}e^{-y2^{n\rho}}\leq e^{-y/2}\sum_{n\geq1}2^{n+1}e^{-2^{n\rho}/2}\leq C_\rho e^{-y/2}.
\end{equation}
The series on the right-hand side is finite since $\rho>0$. Applying \eqref{eq:dyadic-series} with $y=\kappa_\rho Nx^2/b$ gives the required bound whenever $y\geq1$. When $y<1$, we use
\[\PP(B_k>x)\leq1\leq e^{1/2}e^{-y/2},\]
so the same estimate holds after increasing the multiplicative constant. Thus, there exist constants $C_\rho,c_\rho>0$, depending only on $\rho$, such that
\begin{equation}\label{eq:local-sup-increment}
\PP(B_k>x)\leq C_\rho\exp\parent{-c_\rho\frac{Nx^2}{b}}.
\end{equation}

We can now estimate $A_k$. Since $A_k\leq|R_k^N|+B_k$, applying \eqref{eq:fixed-tail} and \eqref{eq:local-sup-increment} with threshold $x/2$ gives
\[\PP(A_k>x)\leq\PP\parent{|R_k^N|>\frac x2}+\PP\parent{B_k>\frac x2}\leq2\exp\parent{-\frac{Nx^2}{8a}}+C_\rho\exp\parent{-c_\rho\frac{Nx^2}{4b}}.\]
Since $a,b\leq a+b$, increasing $C_\rho$ and decreasing $c_\rho$ if necessary yields
\[\PP(A_k>x)\leq C_\rho\exp\parent{-c_\rho\frac{Nx^2}{a+b}}.\]

Finally, set $K_T:=\lceil T\rceil$. Since $T>0$, the interval $[0,T]$ is covered by $I_0,\ldots,I_{K_T-1}$, and $K_T\leq1+T$. A union bound therefore gives
\[\PP\parent{\sup_{0\leq t\leq T}|R_t^N|>x}\leq\sum_{k=0}^{K_T-1}\PP(A_k>x)\leq C_\rho(1+T)\exp\parent{-c_\rho\frac{Nx^2}{a+b}},\]
which proves \eqref{eq:convolution-maximal}.
\end{proof}

\section{Construction of a finite smooth net}\label{app:finite-net}
We construct a finite net on $B_K$, then extend and regularize its elements on $\RR^d$, preserving the Lipschitz bound and the normalization at the origin.

\begin{proof}[Proof of Lemma~\ref{lem:net}]
Fix $K,\delta>0$.
We first construct a finite family of continuous functions satisfying the second condition on $B_K$. To this end, we work on the compact ball $B_K$ and apply the Arzelà--Ascoli theorem. Consider
\[\FFF_K:=\left\{f\in C(B_K;\RR):f(0)=0,\ |f(x)-f(y)|\leq|x-y|\text{ for every }x,y\in B_K\right\}.\]
Every $f\in\FFF_K$ satisfies $|f(x)|\leq|x|\leq K$, so $\FFF_K$ is uniformly bounded. Moreover, all its elements are $1$-Lipschitz, which gives equicontinuity. Finally, the conditions $f(0)=0$ and $|f(x)-f(y)|\leq|x-y|$ are preserved under uniform convergence, so $\FFF_K$ is closed in $C(B_K)$. By the Arzelà--Ascoli theorem, $\FFF_K$ is therefore compact and admits a finite $\delta/2$-net $f_1,\ldots,f_m\in\FFF_K$.

We next extend these functions to compactly supported functions on $\RR^d$, while preserving their normalization and Lipschitz constant. This will allow us to construct functions in $\DDD$ by regularization. For every $k\in\{1,\ldots,m\}$, define
\[h_k(x):=\inf_{z\in B_K}\parent{f_k(z)+|x-z|}\,\wedge\,\operatorname{dist}(x,B_{2K}^c),\qquad x\in\RR^d.\]
Both functions appearing in the minimum are $1$-Lipschitz, and therefore $h_k$ is globally $1$-Lipschitz. Moreover, it extends $f_k$, i.e. $h_k=f_k$ on $B_K$. Indeed, if $x\in B_K$, then $\operatorname{dist}(x,B_{2K}^c)=2K-|x|\geq K\geq f_k(x)$, while the Lipschitz property of $f_k$ gives
\[f_k(x)=\inf_{z\in B_K}\parent{f_k(z)+|x-z|}=h_k(x).\]
Finally, $h_k$ is compactly supported because it vanishes on $B_{2K}^c$. Indeed, if $x\in B_{2K}^c$, then $\operatorname{dist}(x,B_{2K}^c)=0$ and
\[\fa z\in B_K, \quad f_k(z)+|x-z| \geq -|z|+|x|-|z| \geq 2K - 2K = 0.\]
Consequently, $h_k=0$ on $B_{2K}^c$. Thus, $h_k$ is globally $1$-Lipschitz, compactly supported, and satisfies $h_k(0)=0$.

Thirdly, to obtain the first condition, it remains to regularize these functions. Let $\rho_\varepsilon$ be a standard smooth mollifier supported in $B_\varepsilon$ and define
\[g_k:=\rho_\varepsilon\ast h_k-(\rho_\varepsilon\ast h_k)(0).\]
This preserves the second condition: indeed, $g_k(0)=0$, and convolution does not increase the Lipschitz constant, so that $\norme{\nabla g_k}{\infty}\leq1$. Moreover, since $h_k$ is compactly supported, $g_k=\rho_\varepsilon\ast h_k-(\rho_\varepsilon\ast h_k)(0)$ is constant outside a compact set. Thus $g_k$ is smooth and $\nabla g_k\in C_c^\infty(\RR^d;\RR^d)$, so $g_k\in\DDD$.

Finally, it remains to prove the last condition, concerning the approximation on $B_K$ of every globally $1$-Lipschitz function $g$ satisfying $g(0)=0$. Let $g$ be such a function. We use the triangle inequality
\[\norme{g-g_k}{L^\infty(B_K)}\leq\norme{g-f_k}{L^\infty(B_K)}+\norme{f_k-g_k}{L^\infty(B_K)}.\]
For the first term on the right-hand side, since the restriction of $g$ to $B_K$ belongs to $\mathcal F_K$, there exists $k\in\{1,\ldots,m\}$ such that
\[\norme{g-f_k}{L^\infty(B_K)}\leq\frac{\delta}{2}.\]
For the second term, we use the following identity, valid for every $x\in B_K$, which follows from the definition of $g_k$ and from the fact that $h_k=f_k$ on $B_K$ and $h_k(0)=f_k(0)=0$:
\begin{equation}\label{eq:sec-term-rhs}
f_k(x)-g_k(x)=\parent{h_k(x)-(\rho_\varepsilon\ast h_k)(x)}-\parent{h_k(0)-(\rho_\varepsilon\ast h_k)(0)}.
\end{equation}
Since $h_k$ is globally $1$-Lipschitz and $\rho_\varepsilon$ is supported in $B_\varepsilon$, for every $x\in\RR^d$,
\[\vabs{h_k(x)-(\rho_\varepsilon\ast h_k)(x)}=\vabs{\int_{\RR^d}\rho_\varepsilon(y)\parent{h_k(x)-h_k(x-y)}\,\d y}\leq\int_{\RR^d}\rho_\varepsilon(y)|y|\,\d y\leq\varepsilon.\]
Hence, $\norme{\rho_\varepsilon\ast h_k-h_k}{\infty}\leq\varepsilon$. Applying the triangle inequality to \eqref{eq:sec-term-rhs}, we obtain
\[\norme{f_k-g_k}{L^\infty(B_K)}\leq2\norme{\rho_\varepsilon\ast h_k-h_k}{\infty}\leq2\varepsilon.\]
Choosing $\varepsilon\leq\delta/4$ and combining the preceding estimates gives
\[\norme{g-g_k}{L^\infty(B_K)}\leq\frac{\delta}{2}+2\varepsilon\leq\delta.\]
This proves the third condition and completes the proof.
\end{proof}
\end{appendix}

\bibliography{bibliographie_metastability}
\bibliographystyle{alpha}

\end{document}